\newtheorem{theorem}{Theorem}[section]
\newtheorem{lemma}[theorem]{Lemma}
\newtheorem{corollary}[theorem]{Corollary}
\newtheorem{proposition}[theorem]{Proposition}
\newtheorem*{clemma}{Cassels lemma}
\theoremstyle{definition}
\newtheorem{remark}[theorem]{Remark}
\newtheorem{example}[theorem]{Example}
\numberwithin{equation}{section}
\DeclareMathOperator{\RE}{Re}
\DeclareMathOperator{\ord}{ord}
\DeclareMathOperator{\supp}{supp}
\DeclareMathOperator{\meas}{meas}
\begin{document}

\title[Hurwitz zeta-function with algebraic parameter]
{A random variable related to the Hurwitz zeta-function with algebraic parameter}

\author[M. Mine]{Masahiro Mine}
\address{Global Education Center\\ Waseda University\\
1-6-1 NishiWaseda, Shinjuku-ku, Tokyo 169-8050, Japan}
\email{m-mine@aoni.waseda.jp}

\date{}

\begin{abstract}
In this paper, we introduce a certain random variable closely related to the value-distribution of the Hurwitz zeta-function with algebraic parameter. 
We prove a version of the limit theorem, where the limit measure is presented by the law of this random variable. 
Then we apply it to show that any complex number can be approximated by values of the Hurwitz zeta-function for algebraic irrational parameters but with finite exceptions. 
\end{abstract}

\subjclass[2020]{Primary 11M35; Secondary 60F05}

\keywords{Hurwitz zeta-function, value-distribution, limit theorem, denseness}

\maketitle

\section{Introduction}\label{sec:1}
Let $s=\sigma+it$ be a complex variable. 
For a real number $\alpha$ satisfying $0<\alpha \leq1$, the Hurwitz zeta-function $\zeta(s,\alpha)$ is defined as 
\begin{gather}\label{eq:10060143}
\zeta(s,\alpha)
= \sum_{n=0}^{\infty} \frac{1}{(n+\alpha)^s}. 
\end{gather}
This series is convergent absolutely for $\sigma>1$. 
The Riemann zeta-function $\zeta(s)$ is a special case of the Hurwitz zeta-function. 
Indeed, we obviously have $\zeta(s,1)=\zeta(s)$. 
Some properties on $\zeta(s)$ are generalized to $\zeta(s,\alpha)$ for every $\alpha$. 
For example, $\zeta(s,\alpha)$ can be continued to a holomorphic function on $\mathbb{C}$ except only for a simple pole at $s=1$. 
On the other hand, Davenport--Heilbronn \cite{DavenportHeilbronn1936a} proved that $\zeta(s,\alpha)$ has infinitely many zeros for $\sigma>1$ if $\alpha\neq1/2$ or $1$ is rational or transcendental, while $\zeta(s)$ has no zeros for $\sigma>1$ due to the Euler product representation
\begin{gather}\label{eq:10032309}
\zeta(s)
= \prod_{p} \left(1-\frac{1}{p^s}\right)^{-1},  
\end{gather}
where $p$ runs through all prime numbers. 
The absence of the Euler product for the Hurwitz zeta-function causes significant differences in methods for investigating the value-distributions of $\zeta(s)$ and $\zeta(s,\alpha)$. 

Let $\mathbb{X}(p)$ be random variables indexed by prime numbers $p$, which are independent and uniformly distributed on the unit circle of the complex plane. 
According to \eqref{eq:10032309}, we define the random variable $\zeta(\sigma,\mathbb{X})$ as
\begin{gather*}
\zeta(\sigma,\mathbb{X})
= \prod_{p} \left(1-\frac{\mathbb{X}(p)}{p^\sigma}\right)^{-1}. 
\end{gather*}
This infinite product is convergent for $\sigma>1/2$ almost surely. 
Due to the unique factorization of the integers, the complex numbers $p^{-it}$ for $t \in \mathbb{R}$ behave as if they are independent random variables. 
Therefore we expect that the value-distribution of $\zeta(\sigma+it)$ in the $t$-aspect is approximated by using $\zeta(\sigma,\mathbb{X})$. 
In fact, we have the following limit theorem proved essentially by Jessen--Wintner \cite{JessenWintner1935}. 
Let $\sigma>1/2$ be a fixed real number. 
Then the probability measure
\begin{gather*}
P_{\sigma,T}(A)
= \frac{1}{T} \meas \left\{t \in [0,T] ~\middle|~ \zeta(\sigma+it) \in A \right\}, 
\quad
A \in \mathcal{B}(\mathbb{C})
\end{gather*}
converges weakly as $T \to\infty$ to the law of $\zeta(\sigma,\mathbb{X})$. 
Here, $\meas S$ stands for the one-dimensional Lebesgue measure of a set $S$, and $\mathcal{B}(\mathbb{C})$ is the algebra of Borel sets of $\mathbb{C}$. 
See Laurin\v{c}ikas \cite{Laurincikas1996a} for more information about the limit theorem for $\zeta(s)$. 

For the Hurwitz zeta-function, we consider another random variable to obtain the limit theorem. 
For $0<\alpha \leq1$, we define 
\begin{gather}\label{eq:10010000}
\zeta(\sigma,\mathbb{X}_\alpha)
= \sum_{n=0}^{\infty} \frac{\mathbb{X}_\alpha(n)}{(n+\alpha)^\sigma}, 
\end{gather}
where $\mathbb{X}_\alpha(n)$ are some random variables indexed by integers $n \geq0$. 
Our first purpose is to suitably choose $\mathbb{X}_\alpha(n)$ and to show that the probability measure
\begin{gather}\label{eq:10010305}
P_{\sigma,\alpha,T}(A)
= \frac{1}{T} \meas \left\{t \in [0,T] ~\middle|~ \zeta(\sigma+it,\alpha) \in A \right\}, 
\quad
A \in \mathcal{B}(\mathbb{C})
\end{gather}
converges weakly as $T \to\infty$ to the law of the random variable $\zeta(\sigma,\mathbb{X}_\alpha)$. 
This is already achieved for the case in which $\alpha$ is a transcendental number. 
In this case, we choose $\mathbb{X}_\alpha(n)$ as independent random variables uniformly distributed on the unit circle of $\mathbb{C}$. 
Then we obtain the desired limit theorem for $\zeta(s,\alpha)$ as a consequence of \cite[Theorem 29]{JessenWintner1935}. 
Here, we remark that it is necessary for the proof to use the fact that the real numbers $\log(n+\alpha)$ are linearly independent over $\mathbb{Q}$ if $\alpha$ is transcendental. 
The linear independence of $\log(n+\alpha)$ is not necessarily valid for the case where $\alpha$ is algebraic. 
For example, Dubickas \cite{Dubickas1998} proved that the equation
\begin{gather*}
(x_1+\alpha)(x_2+\alpha)(x_3+\alpha)
= (u+\alpha)(v+\alpha)
\end{gather*}
has a solution in positive integers $x_1,x_2,x_3,u,v \geq n_0$ for any $n_0 \in \mathbb{Z}_{>0}$ if $\alpha$ is an algebraic integer of degree 2, which shows the $\mathbb{Q}$-linear dependence of the real numbers $\log(n+\alpha)$. 
Therefore we must change the choice of $\mathbb{X}_\alpha(n)$ to obtain the result for an algebraic parameter $\alpha$. 
The first main result provides suitable random variables $\mathbb{X}_\alpha(n)$. 

\begin{theorem}\label{thm:1.1}
Let $\alpha$ be an algebraic number satisfying $0<\alpha \leq1$. 
Then there exist random variables $\mathbb{X}_\alpha(n)$ for $n \geq0$ which are distributed on the unit circle, and 
\begin{gather}\label{eq:10010220}
\mathbf{E}\left[\mathbb{X}_\alpha(n_1)^{e_1} \cdots \mathbb{X}_\alpha(n_k)^{e_k}\right]
=
\begin{cases}
1
& \text{if $(n_1+\alpha)^{e_1}\cdots(n_k+\alpha)^{e_k}=1$},
\\
0
& \text{otherwise}
\end{cases}
\end{gather}
is satisfied for any integers $n_1, \ldots, n_k \geq0$ and $e_1, \ldots, e_k \in \mathbb{Z}$. 
\end{theorem}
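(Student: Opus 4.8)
The plan is to realise all the $\mathbb{X}_\alpha(n)$ simultaneously as the coordinates of a single Haar‑distributed random character of a suitable discrete abelian group, so that the moment identity \eqref{eq:10010220} turns into nothing more than the orthogonality relations for characters. Set $K=\mathbb{Q}(\alpha)$ and let $\Gamma$ be the subgroup of the multiplicative group $K^{\times}$ (equivalently, of $\mathbb{C}^{\times}$) generated by the numbers $n+\alpha$ for $n\ge0$; since $0<\alpha\le1$ these are positive reals, so $\Gamma$ is well defined and is a countable discrete abelian group. Let $\widehat{\Gamma}=\operatorname{Hom}(\Gamma,\mathbb{T})$ be its Pontryagin dual, $\mathbb{T}$ denoting the unit circle; because $\Gamma$ is countable discrete, $\widehat{\Gamma}$ is a compact metrizable abelian group, and we take $\mu$ to be its normalized Haar measure.

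First I would fix the probability space to be $(\widehat{\Gamma},\mu)$ and let $\mathbb{X}$ be the Haar‑random character (the identity map on this space), and then define
\[
\mathbb{X}_\alpha(n)=\mathbb{X}(n+\alpha)\in\mathbb{T}\qquad(n\ge0).
\]
Each $\mathbb{X}_\alpha(n)$ is the composition of the (continuous) evaluation map $\chi\mapsto\chi(n+\alpha)$ with the identity, hence is a measurable $\mathbb{T}$‑valued function on $\widehat{\Gamma}$, i.e.\ a random variable taking values on the unit circle, as required.

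Next I would verify \eqref{eq:10010220}. Given $n_1,\dots,n_k\ge0$ and $e_1,\dots,e_k\in\mathbb{Z}$, put $g=(n_1+\alpha)^{e_1}\cdots(n_k+\alpha)^{e_k}\in\Gamma$; since $\mathbb{X}$ is a homomorphism, $\mathbb{X}_\alpha(n_1)^{e_1}\cdots\mathbb{X}_\alpha(n_k)^{e_k}=\mathbb{X}(g)$, so the left‑hand side of \eqref{eq:10010220} equals $\int_{\widehat{\Gamma}}\chi(g)\,d\mu(\chi)$. If $g=1$ in $\Gamma$ then $\chi(g)=1$ for all $\chi$ and the integral is $1$. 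If $g\neq1$, then—characters of a discrete abelian group separate its points (which one gets from injectivity of $\mathbb{T}=\mathbb{R}/\mathbb{Z}$ as a $\mathbb{Z}$‑module)—there is $\chi_0$ with $\chi_0(g)\neq1$, and translation invariance of $\mu$ gives $\int\chi(g)\,d\mu(\chi)=\int(\chi_0\chi)(g)\,d\mu(\chi)=\chi_0(g)\int\chi(g)\,d\mu(\chi)$, forcing the integral to vanish. Since the relation $g=1$ in $\Gamma$ is literally the relation $(n_1+\alpha)^{e_1}\cdots(n_k+\alpha)^{e_k}=1$ in $\mathbb{C}$, this is precisely \eqref{eq:10010220}.

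For concreteness the random character can be described explicitly: by Dirichlet's unit theorem together with unique factorization of ideals in $\mathcal{O}_K$, the group $K^{\times}$—hence also $\Gamma$—is a direct product $T\times F$ of a finite cyclic group $T=\Gamma\cap\mu_K$ of order $w$ and a free abelian group $F$; choosing a generator of $T$ and a $\mathbb{Z}$‑basis $(g_i)_{i}$ of $F$ and writing each $n+\alpha$ in these coordinates, $\mathbb{X}_\alpha(n)$ becomes an explicit monomial in one uniform $w$‑th root of unity and independent uniform circle‑valued variables $\mathbb{X}(g_i)$, and \eqref{eq:10010220} follows from $\mathbb{E}[\mathbb{X}(g_i)^a]=\mathbf{1}_{\{a=0\}}$ by a direct computation. (When $\alpha$ is transcendental the $\log(n+\alpha)$ are $\mathbb{Q}$‑linearly independent, $\Gamma$ is free on the $n+\alpha$, and this recovers the classical choice of independent uniform variables; algebraicity of $\alpha$ is not actually needed for this statement, only for the later analysis of $\Gamma$ and its relations.) I do not expect a genuine obstacle: this is a preparatory result, and once \eqref{eq:10010220} is recognised as character orthogonality on $\widehat{\Gamma}$ the argument is immediate; the only points needing care are that characters of a countable discrete abelian group separate points (so the ``otherwise'' case truly gives $0$) and that the construction be carried on an honest probability space, both handled by passing to the compact dual group with its Haar measure.
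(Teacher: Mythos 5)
Your argument is correct, and it takes a genuinely different and more conceptual route than the paper. You package the desired moment identity \eqref{eq:10010220} as orthogonality of characters: you form the countable multiplicative group $\Gamma\le K^\times$ generated by the $n+\alpha$, take the Haar-random character of the compact dual $\widehat\Gamma$, and read off \eqref{eq:10010220} from $\int_{\widehat\Gamma}\chi(g)\,d\mu(\chi)=\mathbf 1_{\{g=1\}}$, which needs only the point-separation property of $\widehat\Gamma$ (divisibility/injectivity of $\mathbb T$) and translation invariance of Haar measure. The paper instead constructs $\mathbb X_\alpha(n)$ explicitly: it fixes $h=h_K$, chooses generators $\varpi_{\mathfrak p}$ of $\mathfrak p^h$ and a positive fundamental system of units, writes $(n+\alpha)^h=\prod\varpi_{\mathfrak p_j}^{a_j}\prod u_j^{b_j}$, and sets $\mathbb X_\alpha(n)=\prod_\lambda\mathcal X(\lambda)^{\operatorname{ord}(n+\alpha,\lambda)}$ with independent uniform $\mathcal X(\lambda)$; \eqref{eq:10010220} then follows from the additivity of $\operatorname{ord}$ and independence. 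Your abstract proof is shorter, and — as you note — does not use algebraicity at all, whereas the paper's construction lives entirely inside the number field $K=\mathbb Q(\alpha)$; on the other hand, the paper's explicit coordinates $\mathcal X(\lambda)$ are used directly in the proof of Lemma~\ref{lem:2.4}, so the explicitness is not mere pedantry. Since the joint law of $(\mathbb X_\alpha(n))_n$ is in fact determined by \eqref{eq:10010220} (these moments are exactly the Fourier coefficients of the law on $\mathbb T^{\mathbb Z_{\ge0}}$), either realization gives the same object, and your final paragraph sketching the concrete coordinates recovers essentially the paper's construction. One tiny inaccuracy: because $0<\alpha\le1$ all $n+\alpha$ are positive reals, so $\Gamma\subset\mathbb R_{>0}$ is torsion-free and your $T=\Gamma\cap\mu_K$ is trivial, i.e.\ $w=1$ and $\Gamma$ is free abelian; this only simplifies the explicit description and does not affect the argument.
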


The precise construction of $\mathbb{X}_\alpha(n)$ is explained in Section \ref{sec:2}. 
Note that we have $\overline{\mathbb{X}_\alpha(n)}=\mathbb{X}_\alpha(n)^{-1}$ since $\mathbb{X}_\alpha(n)$ are distributed on the unit circle. 
Therefore the random variables $\mathbb{X}_\alpha(n)$ of Theorem \ref{thm:1.1} are orthonormal in the sense that the following condition is satisfied: 
\begin{gather*}
\mathbf{E}[ \mathbb{X}_\alpha(m) \overline{\mathbb{X}_\alpha(n)}]
=
\begin{cases}
1
& \text{if $m=n$}, 
\\
0
& \text{otherwise}. 
\end{cases}
\end{gather*}
Then we apply the Menshov--Rademacher theorem \cite[Theorem B.10.5]{Kowalski2021} to see that \eqref{eq:10010000} is convergent for $\sigma>1/2$ almost surely due to 
\begin{gather*}
\sum_{n=0}^{\infty} \frac{(\log{n})^2}{(n+\alpha)^{2\sigma}}
< \infty.
\end{gather*}
With the above choice of $\mathbb{X}_\alpha(n)$, we prove the following limit theorem of $\zeta(s,\alpha)$ for any algebraic irrational number $\alpha$. 

\begin{theorem}\label{thm:1.2}
Let $\alpha$ be an algebraic number satisfying $0<\alpha \leq1$, and let $\mathbb{X}_\alpha(n)$ be the random variables as in Theorem \ref{thm:1.1}. 
Then, for any fixed real number $\sigma>1/2$, the probability measure $P_{\sigma,\alpha,T}$ defined as \eqref{eq:10010305} converges weakly as $T \to\infty$ to the law of $\zeta(\sigma,\mathbb{X}_\alpha)$. 
\end{theorem}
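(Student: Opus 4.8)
The plan is to adapt Bagchi's method, with the $\mathbb{Q}$-linear independence of the numbers $\log(n+\alpha)$ --- unavailable here --- replaced throughout by the moment identity \eqref{eq:10010220}. Write $\mathbb{T}=\{z\in\mathbb{C}:|z|=1\}$ and let $\Omega=\prod_{n\ge0}\mathbb{T}$, a compact metrizable abelian group; its character group is the free abelian group $\bigoplus_{n\ge0}\mathbb{Z}$, with $(e_n)_n$ acting on $\omega=(\omega(n))_n\in\Omega$ by $\omega\mapsto\prod_n\omega(n)^{e_n}$. Put
\[
L=\Bigl\{(e_n)_n\in{\textstyle\bigoplus_{n\ge0}\mathbb{Z}}\ :\ \prod_n(n+\alpha)^{e_n}=1\Bigr\},
\]
and let $G=L^\perp\subseteq\Omega$ be its annihilator, a closed subgroup carrying a normalised Haar measure $m_G$. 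By Pontryagin duality the characters of $\Omega$ trivial on $G$ are exactly those in $L$, so the Fourier coefficients of $m_G$ are $\int_G\prod_n\omega(n)^{e_n}\,dm_G(\omega)=1$ when $(e_n)_n\in L$ and $0$ otherwise. Comparing with \eqref{eq:10010220} and using that a probability measure on a compact group is determined by its Fourier coefficients, the joint law of $(\mathbb{X}_\alpha(n))_{n\ge0}$ equals $m_G$; I may therefore take $(\mathbb{X}_\alpha(n))_{n\ge0}$ to be the coordinate family on the probability space $(G,m_G)$.

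The first step is a limit theorem on $G$. Define $\varphi\colon\mathbb{R}\to\Omega$ by $\varphi(t)=\bigl((n+\alpha)^{-it}\bigr)_{n\ge0}$; since $\prod_n(n+\alpha)^{-ie_nt}=\bigl(\prod_n(n+\alpha)^{e_n}\bigr)^{-it}=1$ for $(e_n)_n\in L$, we have $\varphi(t)\in G$ for every $t$. I would show that $\varphi$ is uniformly distributed in $(G,m_G)$, that is, $Q_T(A):=\frac1T\meas\{t\in[0,T]:\varphi(t)\in A\}$ converges weakly to $m_G$ as $T\to\infty$. By the Stone--Weierstrass theorem this reduces to the convergence of $\frac1T\int_0^T\prod_n(n+\alpha)^{-ie_nt}\,dt$ for each $(e_n)_n$; writing $\beta=\prod_n(n+\alpha)^{e_n}>0$, this integral equals $\frac1T\int_0^T\beta^{-it}\,dt$, which is $1$ when $\beta=1$ and $O\bigl((T|\log\beta|)^{-1}\bigr)$ otherwise --- matching the Fourier coefficients of $m_G$ just computed.

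Next I would introduce a smoothed truncation. For $X\ge1$ set $v_X(n)=\exp(-(n+\alpha)/X)$ and $g_X(\omega)=\sum_{n\ge0}v_X(n)\omega(n)(n+\alpha)^{-\sigma}$; as $\sum_n v_X(n)(n+\alpha)^{-\sigma}<\infty$, this series converges uniformly on $\Omega$, so $g_X$ is continuous. One has $g_X(\varphi(t))=\sum_{n\ge0}v_X(n)(n+\alpha)^{-\sigma-it}=:\zeta_X(\sigma+it,\alpha)$ and $g_X((\mathbb{X}_\alpha(n))_n)=\sum_{n\ge0}v_X(n)\mathbb{X}_\alpha(n)(n+\alpha)^{-\sigma}=:\zeta_X(\sigma,\mathbb{X}_\alpha)$, so by the first step and the continuous mapping theorem the measure $\frac1T\meas\{t\in[0,T]:\zeta_X(\sigma+it,\alpha)\in A\}$ converges weakly, as $T\to\infty$, to the law of $\zeta_X(\sigma,\mathbb{X}_\alpha)$. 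On the probabilistic side, orthogonality of the $\mathbb{X}_\alpha(n)$ gives $\mathbb{E}\bigl|\zeta(\sigma,\mathbb{X}_\alpha)-\zeta_X(\sigma,\mathbb{X}_\alpha)\bigr|^2=\sum_{n\ge0}(1-v_X(n))^2(n+\alpha)^{-2\sigma}\to0$ as $X\to\infty$, so $\zeta_X(\sigma,\mathbb{X}_\alpha)\to\zeta(\sigma,\mathbb{X}_\alpha)$ in $L^2$, hence in law.

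Finally I would compare $\zeta(\sigma+it,\alpha)$ with its truncation in mean. Here I would use the Mellin representation $\zeta_X(s,\alpha)=\frac1{2\pi i}\int_{(a)}\Gamma(z)X^z\zeta(s+z,\alpha)\,dz$ with $a>1$, move the contour to $\RE z=c$ for some $c$ with $1/2-\sigma<c<0$, collect the residues at $z=0$ (contributing $\zeta(s,\alpha)$) and at $z=1-s$ (contributing $\Gamma(1-s)X^{1-s}$), and then estimate $\frac1T\int_0^T|\zeta(\sigma+it,\alpha)-\zeta_X(\sigma+it,\alpha)|\,dt$ by Cauchy--Schwarz, using the exponential decay of $\Gamma$ on vertical lines together with the classical mean-value bound $\int_0^T|\zeta(\sigma'+it,\alpha)|^2\,dt\ll_{\sigma',\alpha}T$ valid for every fixed $\sigma'>1/2$; this gives
\[
\lim_{X\to\infty}\ \limsup_{T\to\infty}\ \frac1T\int_0^T\bigl|\zeta(\sigma+it,\alpha)-\zeta_X(\sigma+it,\alpha)\bigr|\,dt=0.
\]
Feeding this bound and the two weak convergences of the previous paragraph into the standard approximation lemma for weak convergence (see, e.g., \cite{Laurincikas1996a}) yields that $P_{\sigma,\alpha,T}$ converges weakly to the law of $\zeta(\sigma,\mathbb{X}_\alpha)$, which is the assertion. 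I expect the last displayed estimate to be the main obstacle: it needs the second-moment bound for $\zeta(s,\alpha)$ uniformly in the half-plane $\sigma>1/2$ and a quantitative handle on the truncation error, whereas the first step is soft harmonic analysis and the removal of the cut-off on the probabilistic side is a short $L^2$ computation.
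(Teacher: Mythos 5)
Your proposal is correct, and it takes a genuinely different route from the paper's. The paper (Section~\ref{sec:3}) proves convergence of the characteristic functions directly: it hard-truncates to $\zeta_N(\sigma+it,\alpha)$, Taylor-expands $\psi_w$, and compares each moment $\frac1T\int_0^T\zeta_N^\mu\overline{\zeta_N}^\nu\,dt$ with $\mathbb{E}[\zeta_N(\sigma,\mathbb{X}_\alpha)^\mu\overline{\zeta_N(\sigma,\mathbb{X}_\alpha)}^\nu]$, bounding the off-diagonal sum $S_2$ by a Liouville-type lower bound for $|\log\frac{(m_1+\alpha)\cdots(m_\mu+\alpha)}{(n_1+\alpha)\cdots(n_\nu+\alpha)}|$ drawn from Bugeaud's Theorem~A.1; the truncation is then removed using the approximate functional equation plus Montgomery--Vaughan. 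Your argument is instead the Bagchi scheme, built on the observation that condition~\eqref{eq:10010220} \emph{characterises} the joint law of $(\mathbb{X}_\alpha(n))_n$ as Haar measure $m_G$ on the annihilator $G=L^\perp\subset\mathbb{T}^{\mathbb{Z}_{\ge0}}$ of the lattice $L$ of multiplicative relations --- a fact the paper never states, since it works with a concrete realisation built from prime-ideal and unit-group data. With that identification the first step becomes soft: Weyl's criterion for the curve $t\mapsto((n+\alpha)^{-it})_n\in G$ only requires $|\log\beta|>0$ term-by-term, not a uniform quantitative lower bound, so Bugeaud's theorem disappears entirely. The trade-off is that your removal of the cut-off shifts the quantitative burden to the Mellin contour shift and the uniform second-moment bound $\int_0^T|\zeta(\sigma'+it,\alpha)|^2\,dt\ll T$ for $\sigma'>1/2$ --- which is exactly the classical input (approximate functional equation plus Montgomery--Vaughan) that the paper deploys in Proposition~\ref{prop:3.2}, so neither approach saves work there. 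Two small points worth making explicit if you write this up: first, that a Borel probability measure on the compact metrizable group $\mathbb{T}^{\mathbb{Z}_{\ge0}}$ is determined by its Fourier coefficients on the finitely supported characters, which is what justifies ``the joint law of $(\mathbb{X}_\alpha(n))_n$ equals $m_G$''; second, that $\zeta(\sigma,\mathbb{X}_\alpha)$ should be introduced as the $L^2(m_G)$-limit of the partial sums (the sequence is $L^2$-Cauchy by orthogonality), so that your displayed $L^2$ estimate is about a well-defined object --- the paper uses Menshov--Rademacher for almost-sure convergence, but the $L^2$ statement suffices for convergence in law.
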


\begin{remark}\label{rem:1.4}
The first version of the limit theorem for $\zeta(s,\alpha)$ containing the case of algebraic irrational parameters was established by Laurin\v{c}ikas \cite{Laurincikas1994}. 
Let $\sigma>1/2$ be a fixed real number. 
Then he proved that the probability measure $P_{\sigma,\alpha,T}$ defined as \eqref{eq:10010305} converges weakly to some probability measure for any $0<\alpha \leq1$. 
However, this result did not present the limit measure explicitly. 
Laurin\v{c}ikas--Steuding \cite{LaurincikasSteuding2005} attempted to present the limit measure in terms of random variables for an algebraic irrational number $\alpha$. 
Let $L(\alpha)=\{\log(n+\alpha) \mid n \in \mathbb{Z}_{\geq0}\}$. 
Then we fix a maximal $\mathbb{Q}$-linearly independent subset of $L(\alpha)$ and denote it by $I(\alpha)$. 
Put
\begin{align*}
\mathcal{M}(\alpha)
&= \left\{m \in \mathbb{Z}_{\geq0} ~\middle|~ \log(m+\alpha) \in I(\alpha) \right\}, \\
\mathcal{N}(\alpha)
&= \left\{n \in \mathbb{Z}_{\geq0} ~\middle|~ \log(n+\alpha) \notin I(\alpha) \right\}. 
\end{align*}
Then we define random variables $\mathbb{X}_\alpha(m)$ for $m \in \mathcal{M}(\alpha)$ and $\mathbb{X}_\alpha(n)$ for $n \in \mathcal{N}(\alpha)$ as follows. 
Let $\mathbb{X}_\alpha(m)$ be independent random variables for $m \in \mathcal{M}(\alpha)$ which are uniformly distributed on the unit circle. 
For $n \in \mathcal{N}(\alpha)$, there exist elements $m_j \in \mathcal{M}(\alpha)$ and $r_j \in \mathbb{Q}$ for $j=1,\ldots,k$ such that
\begin{gather}\label{eq:10021516}
\log(n+\alpha)
= r_1 \log(m_1+\alpha) + \cdots + r_k \log(m_k+\alpha)
\end{gather}
since $\mathcal{M}(\alpha) \cup \{\log(n+\alpha)\}$ is linearly dependent over $\mathbb{Q}$. 
This implies that
\begin{gather*}
n+\alpha
= (m_1+\alpha)^{r_1} \cdots (m_k+\alpha)^{r_k}, 
\end{gather*}
and according to this identity, we define $\mathbb{X}_\alpha(n)$ for $n \in \mathcal{N}(\alpha)$ as 
\begin{gather}\label{eq:10021534}
\mathbb{X}_\alpha(n)
= \mathbb{X}_\alpha(m_1)^{r_1} \cdots \mathbb{X}_\alpha(m_k)^{r_k} 
\end{gather}
with the principal values of the roots. 
Laurin\v{c}ikas--Steuding \cite[Theorem 1]{LaurincikasSteuding2005} claimed that the probability measure $P_{\sigma,\alpha,T}$ converges weakly as $T \to\infty$ to the law of the random variable $\zeta(\sigma,\mathbb{X}_\alpha)$ defined as \eqref{eq:10010000}. 
However, it appears that the proof has a serious gap. 
They actually used the formula
\begin{align*}
(n+\alpha)^{it}
&= \big((m_1+\alpha)^{r_1} \cdots (m_k+\alpha)^{r_k}\big)^{it} \\ 
&= ((m_1+\alpha)^{it})^{r_1} \cdots ((m_k+\alpha)^{it})^{r_k} 
\end{align*}
for all $t \in \mathbb{R}$; see \cite[p.\,423, l.\,2]{LaurincikasSteuding2005}. 
Remark that the second equality fails when the rational numbers $r_j$ are not integers since they used the principal values of the roots, e.g.\ for any $\theta \in (\pi,2\pi)$ we have $\theta-2\pi \in (-\pi,\pi)$ and
\begin{gather*}
(e^{i \theta})^{1/2}
= (e^{i(\theta-2\pi)})^{1/2}
= e^{i (\theta-2\pi)/2}
= -(e^{1/2})^{i \theta}
\neq (e^{1/2})^{i \theta}. 
\end{gather*}
If one can confirm that all rational numbers $r_j$ in equation \eqref{eq:10021516} are integers for any $n \in \mathcal{N}(\alpha)$, then such a trouble does not occur and the proof of \cite{LaurincikasSteuding2005} will make sense. 
However, it seems quite difficult to show the integrality for $r_j$. 
The present paper avoids using the integrality for $r_j$, and we present another construction of the random variables $\mathbb{X}_\alpha(n)$. 
Then Theorem \ref{thm:1.2} is the first result on the limit theorem for $\zeta(s, \alpha)$ with algebraic parameter $\alpha$ whose limit measure is explicitly presented in terms of random variables. 
\end{remark}

Let $\sigma$ be a fixed real number with $1/2<\sigma \leq1$. 
Bohr--Courant \cite{BohrCourant1914} proved that the set $\{\zeta(\sigma+it) \mid t \in \mathbb{R} \}$ is dense in $\mathbb{C}$. 
Furthermore, this denseness result was generalized to the Hurwitz zeta-function $\zeta(s,\alpha)$ by Gonek \cite{Gonek1979} if $\alpha$ is rational or transcendental. 
More precisely, it holds that
\begin{gather}\label{eq:10010344}
\liminf_{T \to\infty} 
\frac{1}{T} \meas \left\{t \in [0,T] ~\middle|~ |\zeta(\sigma+it,\alpha)-z_0|<\epsilon \right\}
> 0
\end{gather}
for any $z_0 \in \mathbb{C}$ and $\epsilon>0$ for such $\alpha$. 
Gonek derived his result as a consequence of the so-called universality theorem for $\zeta(s,\alpha)$. 
He further conjectured in \cite{Gonek1979} that the universality theorem remains true if $\alpha$ is an algebraic irrational number. 
Then we also believe that \eqref{eq:10010344} is true for any algebraic irrational $\alpha$, but it is still open. 
Some evidence to this conjecture was recently presented by Lee--Mishou \cite{LeeMishou2020} and Sourmelidis--Steuding \cite{SourmelidisSteuding2022}. 
In the following, we denote by $\mathcal{A}$ the set of all algebraic irrational numbers $\alpha$ satisfying $0<\alpha<1$. 
Then we prove the following result which also supports the truth of \eqref{eq:10010344} for $\alpha \in \mathcal{A}$. 

\begin{theorem}\label{thm:1.3}
Let $\mathfrak{f}: \mathcal{A} \to \mathbb{R}_{>0}$ be the function defined later as in \eqref{eq:02232117}. 
Then the set $\mathcal{A}_d= \{\alpha \in \mathcal{A} \mid \mathfrak{f}(\alpha) \leq d \}$ contains infinitely many algebraic irrational numbers for $d \geq5$ and satisfies the following property. 
Let $\sigma$ be a fixed real number with $1/2<\sigma<1$. 
Then, for any $d \geq5$, $z_0 \in \mathbb{C}$, and $\epsilon>0$, there exists a finite subset $\mathcal{E}_{d}=\mathcal{E}_{d}(\sigma,z_0,\epsilon) \subset \mathcal{A}_{d}$ such that \eqref{eq:10010344} holds for any $\alpha \in \mathcal{A}_{d} \setminus \mathcal{E}_{d}$. 
\end{theorem}

Unfortunately, the exceptional subset $\mathcal{E}_{d}$ depends on $z_0$ and $\epsilon$. 
This prevents us from proving completely that the set $\{\zeta(\sigma+it,\alpha) \mid t \in \mathbb{R} \}$ is dense in $\mathbb{C}$. 
One can just deduce from Theorem \ref{thm:1.3} that $\{\zeta(\sigma+it, \alpha) \mid t \in \mathbb{R},~ \alpha \in \mathcal{A}_{d} \}$ is dense for any $d \geq5$. 
This is far from expected, but it makes a progress toward Gonek's conjecture by the probabilistic approach. 

\begin{remark}\label{rem:1.5}
Theorem \ref{thm:1.3} is similar to the recent result by Sourmelidis--Steuding \cite{SourmelidisSteuding2022}. 
The advantage of their result is that it is an effective result, which means that one can effectively find a real number $T>0$ such that $|\zeta(\sigma+it,\alpha)-z_0|<\epsilon$ holds with some $t \in [T,2T]$. 
Furthermore, they proved a weak form of the universality theorem for $\zeta(s,\alpha)$ for an algebraic irrational parameter $\alpha$ as a consequence of joint denseness results for $\zeta(s,\alpha)$ and its derivatives. 
As remarked in \cite[Section 1]{SourmelidisSteuding2022}, their results have meaning only when $\sigma>1-\xi$ with $\xi \approx 0.00186$. 
Theorem \ref{thm:1.3} of the present paper is not an effective result, but it has an advantage that we do not need such restriction for $\sigma$. 
\end{remark}

\subsection*{Organization of the paper}
This paper consists of five sections. 
\begin{itemize}
\item
Theorem \ref{thm:1.1} is proved in Section \ref{sec:2}. 
We construct certain random variables $\mathbb{X}_\alpha(n)$ such that condition \eqref{eq:10010220} is satisfied. 
Then we prove several properties of $\mathbb{X}_\alpha(n)$, which are used in the proof of Theorems \ref{thm:1.2} and \ref{thm:1.3}. 
\item
Theorem \ref{thm:1.2} is proved in Section \ref{sec:3}. 
A key step in the proof is to associate the complex numbers $(n_1+\alpha)^{-it}, \ldots, (n_k+\alpha)^{-it}$ for $t \in \mathbb{R}$ with the random variables $\mathbb{X}_\alpha(n_1), \ldots, \mathbb{X}_\alpha(n_k)$ by applying \eqref{eq:10010220}. 
\item
The remaining sections are devoted to the proof of Theorem \ref{thm:1.3}. 
The purpose of Section \ref{sec:4} is to show a variant of the Cassels lemma in \cite{Cassels1961}, which asserts that at least 51 percent of elements in $L(\alpha)=\{\log(n+\alpha) \mid n \in \mathbb{Z}_{\geq0}\}$ are linearly independent over $\mathbb{Q}$ in the sense of density. 
\item
Finally, the proof of Theorem \ref{thm:1.3} is completed in Section \ref{sec:6}. 
To show \eqref{eq:10010344}, we apply the limit theorem proved in Section \ref{sec:3} and the Cassels
lemma proved in Section \ref{sec:4}. 
The method is partially similar to that of Sourmelidis--Steuding \cite{SourmelidisSteuding2022}. 
\end{itemize}

\subsection*{Acknowledgment}
The author would like to thank Hidehiko Mishou for helpful comments on the method of Cassels \cite{Cassels1961} and Takashi Nakamura for discussion about the paper \cite{LaurincikasSteuding2005}. 
He is also grateful to the referee for many valuable suggestions and comments. 
The work of this paper was supported by JSPS Grant-in-Aid for Early-Career Scientists (Grant Number 24K16906).

\section{Certain random variables}\label{sec:2}
Throughout this paper, a random variable means a measurable map $\mathcal{X}: \Omega \to \mathbb{C}$ from a probability space $(\Omega, \mathcal{F}, \mathbf{P})$ to the Borel measurable space $(\mathbb{C}, \mathcal{B}(\mathbb{C}))$. 
Denote the expectation of a random variable $\mathcal{X}$ by 
\begin{gather*}
\mathbf{E}[\mathcal{X}]
= \int_{\Omega} \mathcal{X}(\omega) \,\mathbf{P}(d \omega)
\end{gather*}
as usual. 
Furthermore, the law of a random variable $\mathcal{X}$ is the measure on $(\mathbb{C}, \mathcal{B}(\mathbb{C}))$ defined as $\mathbf{P}\left(\mathcal{X} \in A\right)=\mathbf{P}\left(\{\omega \in \Omega \mid \mathcal{X}(\omega) \in A \}\right)$ for $A \in \mathcal{B}(\mathbb{C})$. 
Let $\Lambda$ be a countable set. 
Then it is well-known that there exist independent random variables $\mathcal{X}(\lambda)$ indexed by $\lambda \in \Lambda$ which are uniformly distributed on the unit circle on $\mathbb{C}$, i.e. the law of every $\mathcal{X}(\lambda)$ satisfies 
\begin{gather*}
\mathbf{P}\left(\mathcal{X}(\lambda) \in A(s,t)\right)
= \frac{t-s}{2\pi} 
\end{gather*}
for any $0<t-s \leq 2\pi$, where $A(s,t)$ is the arc of the unit circle such that
\begin{gather}\label{eq:10121028}
A(s,t)
= \{e^{i \theta} \mid s<\theta<t \}.
\end{gather}
See \cite[Section 5.1]{Laurincikas1996a} for such an example of random variables when $\Lambda$ is the set of prime numbers. 

Let $\alpha$ be an algebraic number satisfying $0<\alpha \leq1$. 
Then we define random variables $\mathbb{X}_\alpha(n)$ indexed by integers $n \geq0$ as follows. 
Let $K=\mathbb{Q}(\alpha)$ and denote by $\mathcal{O}_K$ the ring of integers of $K$. 
For any $y \in K$ with $y>0$, the fractional principal ideal $(y)$ has the decomposition
\begin{gather}\label{eq:10031324}
(y)
= \mathfrak{p}_1^{a_1} \cdots \mathfrak{p}_k^{a_k}, 
\end{gather}
where $\mathfrak{p}_j$ are prime ideals of $K$, and $a_j$ are rational integers uniquely determined by $y$. 
Let $h$ be the class number of $K$. 
Since $\mathfrak{a}^h$ is a principal ideal for any ideal $\mathfrak{a}$ of $K$, we know that the set
\begin{gather*}
S_\mathfrak{p}
= \{\varpi_\mathfrak{p} \in \mathcal{O}_K \mid \mathfrak{p}^h=(\varpi_\mathfrak{p}) \}
\end{gather*}
is non-empty for any prime ideal $\mathfrak{p}$. 
Then we take an element $\varpi = (\varpi_\mathfrak{p})_{\mathfrak{p}}$ in the set $\prod_{\mathfrak{p}} S_\mathfrak{p}$, where $\mathfrak{p}$ runs through all prime ideals of $K$. 
By \eqref{eq:10031324}, we obtain 
\begin{gather*}
(y)^h 
= \mathfrak{p}_1^{h a_1} \cdots \mathfrak{p}_k^{h a_k}
= (\varpi_{\mathfrak{p}_1}^{a_1} \cdots \varpi_{\mathfrak{p}_k}^{a_k}),
\end{gather*}
which yields the formula
\begin{gather*}
y^h
= \varpi_{\mathfrak{p}_1}^{a_1} \cdots \varpi_{\mathfrak{p}_k}^{a_k} \cdot u
\end{gather*}
for some $u \in \mathcal{O}_K^\times$. 
Here, the unit $u$ is uniquely determined only by $y$ if the choice of $\varpi = (\varpi_\mathfrak{p})_{\mathfrak{p}}$ is fixed. 
Furthermore, we can choose $\varpi$ so that every $\varpi_\mathfrak{p}$ is positive. 
Then we see that $u$ is positive due to $y^h>0$.  
Let $\mathcal{U}=(u_1, \ldots, u_d)$ be a fundamental system of units of $\mathcal{O}_K$. 
We can also choose $\mathcal{U}$ so that every $u_j$ is positive. 
With the above choices of $\varpi$ and $\mathcal{U}$, we obtain 
\begin{gather}\label{eq:10031358}
y^h
= \varpi_{\mathfrak{p}_1}^{a_1} \cdots \varpi_{\mathfrak{p}_k}^{a_k} u_1^{b_1} \cdots u_d^{b_d}, 
\end{gather}
where $b_j$ are rational integers uniquely determined by $y$. 
Let $\Lambda$ denote 
\begin{gather}\label{eq:10040341}
\Lambda
= \{\varpi_\mathfrak{p} \mid \text{$\mathfrak{p}$ is a prime ideal of $K$} \}
\cup \{u_1, u_2, \ldots, u_d\}. 
\end{gather}
Using the expression of $y^h$ as in \eqref{eq:10031358}, we define $\ord(y,\lambda)$ for $\lambda \in \Lambda$ as 
\begin{gather*}
\ord(y,\lambda)
=
\begin{cases}
a_j
& \text{if $\lambda=\varpi_{\mathfrak{p}_j}$ for some $\varpi_{\mathfrak{p}_j}$ in \eqref{eq:10031358}}, 
\\
b_j
& \text{if $\lambda=u_j$ for some $u_j$ in \eqref{eq:10031358}}, 
\\
0
& \text{otherwise}. 
\end{cases}
\end{gather*}
By the uniqueness of $a_j$ and $b_j$, we have the formula
\begin{gather}\label{eq:10031503}
\ord(y_1y_2,\lambda)
= \ord(y_1,\lambda)+\ord(y_2,\lambda)
\end{gather}
for any $y_1,y_2 \in K$ with $y_1,y_2>0$ and $\lambda \in \Lambda$. 
Let $\mathcal{X}(\lambda)$ be independent random variables for $\lambda \in \Lambda$ uniformly distributed on the unit circle. 
We finally define $\mathbb{X}_\alpha(n)$ for $n \geq0$ as 
\begin{gather}\label{eq:10031449}
\mathbb{X}_\alpha(n)
= \prod_{\lambda \in \Lambda} \mathcal{X}(\lambda)^{\ord(n+\alpha, \lambda)}. 
\end{gather}
In the following, we always denote by $\mathbb{X}_\alpha(n)$ the random variables defined in the above way. 
First, we show that they fulfill the desired condition of Theorem \ref{thm:1.1}. 

\begin{proof}[Proof of Theorem \ref{thm:1.1}]
By definition, the random variables $\mathbb{X}_\alpha(n)$ are distributed on the unit circle. 
Thus it remains to show below that condition \eqref{eq:10010220} is satisfied for any integers $n_1, \ldots, n_k \geq0$ and $e_1, \ldots, e_k \in \mathbb{Z}$. 
Put
\begin{gather*}
y
= (n_1+\alpha)^{e_1}\cdots(n_k+\alpha)^{e_k}. 
\end{gather*}
Then we have $y \in K$ and $y>0$. 
Using \eqref{eq:10031503}, we calculate $\ord(y,\lambda)$ for $\lambda \in \Lambda$ as 
\begin{gather*}
\ord(y,\lambda)
= e_1 \ord(n_1+\alpha,\lambda) +\cdots+ e_k \ord(n_k+\alpha,\lambda). 
\end{gather*}
Therefore we obtain
\begin{align*}
\mathbb{X}_\alpha(n_1)^{e_1} \cdots \mathbb{X}_\alpha(n_k)^{e_k}
&= \prod_{\lambda \in \Lambda} 
\mathcal{X}(\lambda)^{e_1 \ord(n_1+\alpha, \lambda) +\cdots+ e_k \ord(n_k+\alpha, \lambda)} \\
&= \prod_{\lambda \in \Lambda} 
\mathcal{X}(\lambda)^{\ord(y,\lambda)}
\end{align*}
by the definition of $\mathbb{X}_\alpha(n_j)$. 
Since the random variables $\mathcal{X}(\lambda)$ are independent, the expectation of the above is 
\begin{gather*}
\mathbf{E}\left[\mathbb{X}_\alpha(n_1)^{e_1} \cdots \mathbb{X}_\alpha(n_k)^{e_k}\right]
= \prod_{\lambda \in \Lambda} \mathbf{E}[\mathcal{X}(\lambda)^{\ord(y,\lambda)}]. 
\end{gather*}
Note that $\mathbf{E}[\mathcal{X}(\lambda)^{\ord(y,\lambda)}]$ equals to 1 if $\ord(y,\lambda)=0$ and to 0 otherwise since $\mathcal{X}(\lambda)$ is uniformly distributed on the unit circle. 
Hence we arrive at
\begin{gather}\label{eq:10031523}
\mathbf{E}\left[\mathbb{X}_\alpha(n_1)^{e_1} \cdots \mathbb{X}_\alpha(n_k)^{e_k}\right]
=
\begin{cases}
1
& \text{if $\ord(y,\lambda)=0$ for any $\lambda \in \Lambda$}, 
\\
0
& \text{otherwise}. 
\end{cases}
\end{gather}
The condition that $\ord(y,\lambda)=0$ for any $\lambda \in \Lambda$ is equivalent to $y^h=1$ by the expression of $y^h$ as in \eqref{eq:10031358}. 
Furthermore, $y^h=1$ if and only if $y=1$ due to $y>0$.  
Hence we see that \eqref{eq:10031523} is nothing but \eqref{eq:10010220}. 
\end{proof}

For a random variable $\mathcal{X}$, the characteristic function of $\mathcal{X}$ is defined as 
\begin{gather*}
g(w; \mathcal{X})
= \mathbf{E}\left[\psi_w(\mathcal{X})\right]
\end{gather*}
for $w \in \mathbb{C}$, where $\psi_w(z)=\exp(i \RE(z \overline{w}))$ is an additive character of $\mathbb{C} \simeq \mathbb{R}^2$. 
Then the law of $\mathcal{X}$ is uniquely determined by $g(w; \mathcal{X})$; see \cite[Section 29]{Billingsley1995}. 
For example, a random variable $\mathcal{X}$ is uniformly distributed on the unit circle if and only if the characteristic function is represented as
\begin{gather*}
g(w;\mathcal{X})
= \sum_{m=0}^{\infty} \frac{(-1)^m}{(m!)^2} \Big(\frac{|w|}{2}\Big)^{2m} 
= J_0(|w|), 
\end{gather*}
where $J_0(z)$ is the Bessel function of the first kind of order zero. 
In the remainder of this section, we prove several properties of $\mathbb{X}_\alpha(n)$. 

\begin{lemma}\label{lem:2.1}
Let $\alpha$ be an algebraic number satisfying $0<\alpha<1$. 
Then the random variables $\mathbb{X}_\alpha(n)$ are uniformly distributed on the unit circle. 
\end{lemma}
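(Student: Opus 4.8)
The plan is to show that the characteristic function of $\mathbb{X}_\alpha(n)$ equals $J_0(|w|)$, which, as recalled just before the statement of the lemma, is equivalent to $\mathbb{X}_\alpha(n)$ being uniformly distributed on the unit circle. By construction \eqref{eq:10031449}, $\mathbb{X}_\alpha(n)$ is a product of unit-circle-valued random variables, hence is itself unit-circle-valued; write $\mathbb{X}_\alpha(n)=e^{i\Theta}$ with $\Theta$ a real random variable and let $\mu$ be the law of $\Theta \bmod 2\pi$, viewed as a probability measure on $\mathbb{R}/2\pi\mathbb{Z}$.

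The first step is the elementary reduction that a unit-circle-valued random variable $\mathcal{X}$ is uniformly distributed if and only if $\mathbb{E}[\mathcal{X}^m]=0$ for every nonzero integer $m$. Indeed, the Fourier coefficients of $\mu$ are $\widehat{\mu}(m)=\mathbb{E}[e^{im\Theta}]=\mathbb{E}[\mathbb{X}_\alpha(n)^m]$, and normalized Lebesgue measure on $\mathbb{R}/2\pi\mathbb{Z}$ is the unique probability measure whose Fourier coefficients are $\widehat{\mu}(0)=1$ and $\widehat{\mu}(m)=0$ for $m\neq0$; alternatively, writing $w=|w|e^{i\phi}$, one may expand $\psi_w(\mathcal{X})=\exp(i|w|\cos(\Theta-\phi))$ by the Jacobi--Anger formula and take expectations term by term to obtain $g(w;\mathcal{X})=J_0(|w|)$ directly.

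The second step is to evaluate $\mathbb{E}[\mathbb{X}_\alpha(n)^m]$ for $m\in\mathbb{Z}\setminus\{0\}$ by applying Theorem \ref{thm:1.1} with $k=1$, $n_1=n$, and $e_1=m$: formula \eqref{eq:10010220} gives $\mathbb{E}[\mathbb{X}_\alpha(n)^m]=1$ if $(n+\alpha)^m=1$ and $0$ otherwise. It then remains only to rule out $(n+\alpha)^m=1$, and this is exactly where the hypothesis $0<\alpha<1$ is used: for $n\geq0$ the real number $n+\alpha$ is positive and satisfies $n+\alpha\neq1$ (equality would force $n=0$ and $\alpha=1$), so $(n+\alpha)^m$ is a positive real number different from $1$ for every nonzero $m$. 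Hence $\mathbb{E}[\mathbb{X}_\alpha(n)^m]=0$ for all $m\neq0$, and the reduction of the first step finishes the proof.

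There is no genuine obstacle here; the only point worth stressing is that the strict inequality $\alpha<1$ cannot be dropped, since for $\alpha=1$ the random variable $\mathbb{X}_1(0)$ is identically $1$, which is not uniformly distributed — this is precisely why the lemma excludes the boundary value that Theorem \ref{thm:1.1} still allows.
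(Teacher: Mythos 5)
Your proof is correct and rests on the same arithmetic core as the paper's: apply condition \eqref{eq:10010220} with $k=1$ to evaluate $\mathbb{E}[\mathbb{X}_\alpha(n)^m]$, then observe that $(n+\alpha)^m\neq 1$ for every nonzero integer $m$ because $n+\alpha$ is positive and $\neq 1$ when $0<\alpha<1$. The packaging differs, though, and is a bit cleaner than the paper's. You reduce to the one-dimensional fact that a unit-circle-valued variable is uniformly distributed if and only if its Fourier coefficients $\mathbb{E}[\mathcal{X}^m]$ vanish for $m\neq 0$, which is uniqueness of Fourier series for probability measures on $\mathbb{R}/2\pi\mathbb{Z}$. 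The paper instead expands the planar additive character $\psi_w(z)$ as the double power series \eqref{eq:10040314}, which yields the mixed moments $\mathbb{E}[\mathbb{X}_\alpha(n)^\mu\overline{\mathbb{X}_\alpha(n)}^\nu]$, uses $\overline{\mathbb{X}_\alpha(n)}=\mathbb{X}_\alpha(n)^{-1}$ to rewrite these as $\mathbb{E}[\mathbb{X}_\alpha(n)^{\mu-\nu}]$, kills the off-diagonal terms by exactly your observation, and re-sums the remaining diagonal to obtain $J_0(|w|)$ as in \eqref{eq:10040057}. Both establish the same thing; your route is more elementary in that it works directly on the circle and never needs the Bessel identity, whereas the paper's double-series expansion of $\psi_w$ is not gratuitous, as it is reused almost verbatim in the proofs of Lemmas \ref{lem:2.3} and \ref{lem:2.4}, so the author's choice amounts to keeping the machinery uniform across the section.
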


\begin{proof}
Since the additive character $\psi_w(z)$ is represented as
\begin{gather}\label{eq:10040314}
\psi_w(z)
= \exp\Big(\frac{i}{2}(z \overline{w}+\overline{z} w)\Big)
= \sum_{\mu=0}^{\infty} \sum_{\nu=0}^{\infty} 
\frac{(\frac{i}{2})^{\mu+\nu}}{\mu! \nu!}  
z^\mu \overline{z}^\nu \overline{w}^\mu w^\nu
\end{gather}
for any $z,w \in \mathbb{C}$, we calculate the characteristic function of $\mathbb{X}_\alpha(n)$ as
\begin{gather}\label{eq:10031708}
g(w; \mathbb{X}_\alpha(n))
= \sum_{\mu=0}^{\infty} \sum_{\nu=0}^{\infty} 
\frac{(\frac{i}{2})^{\mu+\nu}}{\mu! \nu!} 
\mathbf{E}[\mathbb{X}_\alpha(n)^\mu \overline{\mathbb{X}_\alpha(n)}^\nu]\,
\overline{w}^\mu w^\nu. 
\end{gather}
Applying \eqref{eq:10010220} with the identity $\overline{\mathbb{X}_\alpha(n)}=\mathbb{X}_\alpha(n)^{-1}$, we have 
\begin{gather*}
\mathbf{E}[\mathbb{X}_\alpha(n)^\mu \overline{\mathbb{X}_\alpha(n)}^\nu] 
= 
\begin{cases}
1
& \text{if $(n+\alpha)^\mu(n+\alpha)^{-\nu}=1$},
\\
0
& \text{otherwise}. 
\end{cases}
\end{gather*}
Note that $(n+\alpha)^\mu (n+\alpha)^{-\nu}=1$ if and only if $\mu=\nu$ since we have $n+\alpha \neq 1$ for $0<\alpha<1$. 
Therefore off-diagonal terms in \eqref{eq:10031708} disappear, and we obtain
\begin{gather}\label{eq:10040057}
g(w; \mathbb{X}_\alpha(n))
= \sum_{\mu=0}^{\infty} \frac{(-1)^\mu}{(\mu!)^2} \Big(\frac{|w|}{2}\Big)^{2\mu} 
= J_0(|w|). 
\end{gather}
This shows that $\mathbb{X}_\alpha(n)$ is uniformly distributed on the unit circle. 
\end{proof}

\begin{remark}\label{rem:2.2}
In the case of $\alpha=1$, we have $\mathbb{X}_\alpha(0) \equiv 1$ by definition. 
It is obviously not uniformly distributed on the unit circle. 
\end{remark}

For random variables $\mathcal{X}_1, \ldots, \mathcal{X}_k$, the joint characteristic function is defined as 
\begin{gather*}
g(w; \mathcal{X}_1, \ldots, \mathcal{X}_k)
= \mathbf{E}\left[\psi_{w_1}(\mathcal{X}_1) \cdots \psi_{w_k}(\mathcal{X}_k)\right]
\end{gather*}
for $w=(w_1, \ldots, w_k) \in \mathbb{C}^k$. 
By using the inversion formula \cite[Section 29]{Billingsley1995}, we find that $\mathcal{X}_1, \ldots, \mathcal{X}_k$ are independent if and only if the equality
\begin{gather*}
g(w; \mathcal{X}_1, \ldots, \mathcal{X}_k)
= g(w_1; \mathcal{X}_1) \cdots g(w_k; \mathcal{X}_k)
\end{gather*}
holds for any $w=(w_1, \ldots, w_k) \in \mathbb{C}^k$. 
By this fact, we prove the following result. 

\begin{lemma}\label{lem:2.4}
Let $\alpha$ be an algebraic number satisfying $0<\alpha<1$, and let
\begin{align*}
\mathcal{L}(N)
&= \{n \in \mathbb{Z} \mid N<n \leq N \log{N} \}, \\
\mathcal{M}(N)
&= \{n \in \mathbb{Z} \mid 0 \leq n \leq N \log{N} \}
\end{align*}
for $N \geq3$. 
We define $\mathcal{K}_\alpha(N)$ as the subset of $\mathcal{L}(N)$ consisting of all integers $n$ such that $(n+\alpha) \mathfrak{a}$ is divisible by a prime ideal $\mathfrak{p}$ not dividing $(m+\alpha) \mathfrak{a}$ for any $m \in \mathcal{M}(N)$ with $m \neq n$, where $\mathfrak{a}$ denotes the ideal denominator of $\alpha$. 
Put
\begin{gather*}
\mathcal{K}_\alpha(N)
= \left\{ n_1, \ldots, n_k \right\}
\quad\text{and}\quad
\mathcal{M}(N) \setminus \mathcal{K}_\alpha(N)
= \left\{ n_{k+1}, \ldots, n_\ell \right\}. 
\end{gather*}
Then the random variables 
\begin{gather*}
\mathbb{X}_\alpha(n_1), \ldots, \mathbb{X}_\alpha(n_k), \,
c_{k+1} \mathbb{X}_\alpha(n_{k+1})+ \cdots +c_\ell \mathbb{X}_\alpha(n_\ell)
\end{gather*}
are independent for any complex numbers $c_{k+1}, \ldots, c_\ell$. 
\end{lemma}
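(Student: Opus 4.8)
The plan is to compute the joint characteristic function of $\mathbb{X}_\alpha(n_1), \ldots, \mathbb{X}_\alpha(n_k)$ together with $Y := c_{k+1}\mathbb{X}_\alpha(n_{k+1}) + \cdots + c_\ell\mathbb{X}_\alpha(n_\ell)$ and to show that it factors as $g(w_1;\mathbb{X}_\alpha(n_1))\cdots g(w_k;\mathbb{X}_\alpha(n_k))\, g(w_{k+1};Y)$, which by the inversion formula recalled before Lemma \ref{lem:2.3} is exactly the criterion for independence. The number-theoretic ingredient driving the argument is the defining property of $\mathcal{K}_\alpha(N)$. For each $j \in \{1,\ldots,k\}$ we fix a prime ideal $\mathfrak{p}_j$ of $K$ dividing $(n_j+\alpha)\mathfrak{a}$ but dividing no $(m+\alpha)\mathfrak{a}$ with $m \in \mathcal{M}(N)$, $m \neq n_j$. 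Writing $v_{\mathfrak{p}}$ for the $\mathfrak{p}$-adic valuation, any prime ideal $\mathfrak{p}$ dividing $\mathfrak{a}$ satisfies $v_{\mathfrak{p}}(\alpha) = -v_{\mathfrak{p}}(\mathfrak{a}) < 0$, hence $v_{\mathfrak{p}}(m+\alpha) = v_{\mathfrak{p}}(\alpha)$ and $v_{\mathfrak{p}}((m+\alpha)\mathfrak{a}) = 0$ for every rational integer $m$, so such a $\mathfrak{p}$ divides no $(m+\alpha)\mathfrak{a}$ at all; therefore $\mathfrak{p}_j \nmid \mathfrak{a}$. Consequently $v_{\mathfrak{p}_j}(n_j+\alpha) \geq 1$, while $v_{\mathfrak{p}_j}(m+\alpha) = 0$ for every $m \in \mathcal{M}(N)$ with $m \neq n_j$; in particular the $\mathfrak{p}_j$ are pairwise distinct.

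First I would expand each factor $\psi_{w_j}(\mathbb{X}_\alpha(n_j))$ and $\psi_{w_{k+1}}(Y)$ by the series \eqref{eq:10040314}, expand $Y^{\mu_{k+1}}\overline{Y}^{\,\nu_{k+1}}$ by multilinearity in the $\mathbb{X}_\alpha(n_m)$ and their conjugates, and interchange the expectation with the resulting multiple series; this interchange is legitimate since all series converge absolutely, because $|\mathbb{X}_\alpha(n)| = 1$ almost surely and $|Y| \leq |c_{k+1}| + \cdots + |c_\ell|$. Proceeding as in the proof of Lemma \ref{lem:2.3}, every monomial of this expansion contributes an expectation of the form $\mathbb{E}[\mathbb{X}_\alpha(n_1)^{f_1}\cdots\mathbb{X}_\alpha(n_\ell)^{f_\ell}]$, where $f_j = \mu_j - \nu_j$ for $j \leq k$ (after using $\overline{\mathbb{X}_\alpha(n)} = \mathbb{X}_\alpha(n)^{-1}$) and where the exponents $f_m$ with $m > k$ come from the expansion of $Y^{\mu_{k+1}}\overline{Y}^{\,\nu_{k+1}}$; note that $Y$ is built only from the $\mathbb{X}_\alpha(n_m)$ with $n_m \notin \mathcal{K}_\alpha(N)$, so only indices $m \in \{k+1,\ldots,\ell\}$ occur there.

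The decisive step is to apply \eqref{eq:10010220}: this expectation equals $1$ when $\prod_i (n_i+\alpha)^{f_i} = 1$ and $0$ otherwise. Taking the $\mathfrak{p}_j$-adic valuation of $\prod_i (n_i+\alpha)^{f_i}$ for a fixed $j \leq k$, every $n_i$ with $i \neq j$ lies in $\mathcal{M}(N)$ and differs from $n_j$, hence contributes $v_{\mathfrak{p}_j}(n_i+\alpha) = 0$; so the product can equal $1$ only if $f_j\, v_{\mathfrak{p}_j}(n_j+\alpha) = 0$, that is, only if $f_j = \mu_j - \nu_j = 0$ for every $j \leq k$. Therefore only the monomials with $\mu_j = \nu_j$ for all $j \leq k$ survive, and for those $\mathbb{X}_\alpha(n_j)^{\mu_j - \nu_j} = 1$, so the surviving part of the multiple series splits as a product of $k$ factors $\sum_{\mu \geq 0} \frac{(-1)^\mu}{(\mu!)^2}(|w_j|/2)^{2\mu} = J_0(|w_j|)$ and a remaining factor $\sum_{\mu_{k+1},\nu_{k+1}} \frac{(i/2)^{\mu_{k+1}+\nu_{k+1}}}{\mu_{k+1}!\,\nu_{k+1}!}\,\mathbb{E}[Y^{\mu_{k+1}}\overline{Y}^{\,\nu_{k+1}}]\,\overline{w_{k+1}}^{\,\mu_{k+1}} w_{k+1}^{\nu_{k+1}} = g(w_{k+1};Y)$. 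By Lemma \ref{lem:2.1} we have $J_0(|w_j|) = g(w_j;\mathbb{X}_\alpha(n_j))$, and the claimed factorization of the joint characteristic function follows, yielding the desired independence.

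I expect the only real obstacle to be the bookkeeping in the first paragraph: extracting from the definition of $\mathcal{K}_\alpha(N)$ the clean statement that each $n_j \in \mathcal{K}_\alpha(N)$ owns a prime ideal $\mathfrak{p}_j$ with $v_{\mathfrak{p}_j}(n_j+\alpha) \geq 1$ and $v_{\mathfrak{p}_j}(m+\alpha) = 0$ for all other $m \in \mathcal{M}(N)$, in particular ruling out the harmless possibility $\mathfrak{p}_j \mid \mathfrak{a}$. Once this decoupling is in hand, the characteristic-function computation is a routine variant of the one already carried out for Lemma \ref{lem:2.3}.
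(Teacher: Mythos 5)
Your proposal is correct and follows the same overall strategy as the paper: expand the joint characteristic function via \eqref{eq:10040314}, and use the decoupling prime ideal $\mathfrak{p}_j$ attached to each $n_j \in \mathcal{K}_\alpha(N)$ to force $\mu_j = \nu_j$ for $j \leq k$ in every surviving monomial, whence the product factors. The one place where you diverge slightly, and in my view cleanly, is the mechanism for killing the terms with $\mu_j \neq \nu_j$: the paper returns to the explicit factorization $\mathbb{X}_\alpha(n) = \prod_\lambda \mathcal{X}(\lambda)^{\ord(n+\alpha,\lambda)}$ from \eqref{eq:10031449}, isolates the factor $\mathcal{X}(\lambda_j)^{(\mu_j-\nu_j)\ord(n_j+\alpha,\lambda_j)}$, and invokes independence of the $\mathcal{X}(\lambda)$; you instead apply \eqref{eq:10010220} directly and take the $\mathfrak{p}_j$-adic valuation of $\prod_i(n_i+\alpha)^{f_i}$, which is equivalent (since $\ord(y,\varpi_\mathfrak{p}) = v_\mathfrak{p}(y)$) but avoids reopening the definition of $\mathbb{X}_\alpha(n)$. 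Your explicit remark that $\mathfrak{p}_j \nmid \mathfrak{a}$, and hence that $v_{\mathfrak{p}_j}(m+\alpha) = v_{\mathfrak{p}_j}((m+\alpha)\mathfrak{a})$, is a small piece of bookkeeping the paper leaves implicit but is worth spelling out, as you noticed.
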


\begin{proof}
Define $\mathcal{Y}=c_{k+1} \mathbb{X}_\alpha(n_{k+1})+ \cdots +c_\ell \mathbb{X}_\alpha(n_\ell)$. 
Then we have 
\begin{align*}
&g(w; \mathbb{X}_\alpha(n_1), \ldots, \mathbb{X}_\alpha(n_k), \mathcal{Y}) \\
&= \mathbf{E}\left[\psi_{w_1}(\mathbb{X}_\alpha(n_1)) \cdots \psi_{w_k}(\mathbb{X}_\alpha(n_k)) \,
\psi_{w_{k+1}}(\mathcal{Y}) \right] \\
&= \mathbf{E}\left[\psi_{w_1}(\mathbb{X}_\alpha(n_1)) \cdots \psi_{w_k}(\mathbb{X}_\alpha(n_k)) \,
\psi_{w_{k+1}}(c_{k+1} \mathbb{X}_\alpha(n_{k+1})) \cdots \psi_{w_{k+1}}(c_\ell \mathbb{X}_\alpha(n_\ell)) \right]
\end{align*}
for $w=(w_1, \ldots, w_{k+1}) \in \mathbb{C}^{k+1}$. 
Expanding $\psi_w(z)$ by using \eqref{eq:10040314}, we have
\begin{align}\label{eq:10040410}
&g(w; \mathbb{X}_\alpha(n_1), \ldots, \mathbb{X}_\alpha(n_k), \mathcal{Y}) \\
&= \sum_{\mu_1=0}^{\infty} \sum_{\nu_1=0}^{\infty} \cdots \sum_{\mu_\ell=0}^{\infty} \sum_{\nu_\ell=0}^{\infty}
\frac{(\frac{i}{2})^{\mu_1+\nu_1+\cdots+\mu_\ell+\nu_\ell}}{\mu_1! \nu_1! \cdots \mu_\ell! \nu_\ell!} 
G_{\mu_1,\nu_1, \ldots, \mu_\ell, \nu_\ell} \, 
c_{k+1}^{\mu_{k+1}} \overline{c_{k+1}}^{\nu_{k+1}} \cdots c_\ell^{\mu_\ell} \overline{c_\ell}^{\nu_\ell} 
\nonumber \\
&\hspace{40truemm}\times 
\overline{w_1}^{\mu_1} w_1^{\nu_1} \cdots \overline{w_k}^{\mu_k} w_k^{\nu_k}
\overline{w_{k+1}}^{\mu_{k+1}+ \cdots +\mu_\ell} w_{k+1}^{\nu_{k+1}+ \cdots +\nu_\ell}, 
\nonumber
\end{align}
where $G_{\mu_1,\nu_1, \ldots, \mu_\ell, \nu_\ell} = \mathbf{E}[\mathbb{X}_\alpha(n_1)^{\mu_1} \overline{\mathbb{X}_\alpha(n_1)}^{\nu_1} \cdots \mathbb{X}_\alpha(n_\ell)^{\mu_\ell} \overline{\mathbb{X}_\alpha(n_\ell)}^{\nu_\ell}]$. 
By the definition of $\mathcal{K}_\alpha(N)$, there exists a prime ideal $\mathfrak{p}_j$ for $j \in \{1, \ldots, k\}$ such that it is a prime divisor of $(n_j+\alpha) \mathfrak{a}$ but not dividing $(m+\alpha) \mathfrak{a}$ for any integer $m \in \mathcal{M}(N)$ with $m \neq n_j$. 
Hence, for $\lambda_j = \varpi_{\mathfrak{p}_j} \in \Lambda$ as in \eqref{eq:10040341}, we have $\ord(n_j+\alpha, \lambda_j)
>0$ but $\ord(m+\alpha, \lambda_j)=0$ for any $m \in \mathcal{M}(N)$ with $m \neq n_j$. 
Note that $\lambda_1,\ldots,\lambda_k$ are distinct. 
Since every $\mathbb{X}_\alpha(n_j)$ is represented as
\begin{gather*}
\mathbb{X}_\alpha(n_j) 
= \mathcal{X}(\lambda_j)^{\ord(n_j+\alpha, \lambda_j)} 
\prod_{\lambda \in \Lambda, \, \lambda \neq \lambda_j} \mathcal{X}(\lambda)^{\ord(n_j+\alpha, \lambda)} 
\end{gather*}
by \eqref{eq:10031449}, we obtain
\begin{align*}
&\mathbb{X}_\alpha(n_1)^{\mu_1} \overline{\mathbb{X}_\alpha(n_1)}^{\nu_1} \cdots \mathbb{X}_\alpha(n_\ell)^{\mu_\ell} \overline{\mathbb{X}_\alpha(n_\ell)}^{\nu_\ell} \\ 
&= \prod_{j=1}^{k} \mathcal{X}(\lambda_j)^{(\mu_j-\nu_j) \ord(n_j+\alpha, \lambda_j)} \\
&\qquad \times 
\prod_{\lambda \in \Lambda, \, \lambda \neq \lambda_1,\ldots,\lambda_k} 
\mathcal{X}(\lambda)^{(\mu_1-\nu_1) \ord(n_1+\alpha, \lambda)+ \cdots +(\mu_\ell-\nu_\ell) \ord(n_\ell+\alpha, \lambda)}. 
\end{align*}
By the independence of $\mathcal{X}(\lambda)$, it follows that
\begin{align*}
G_{\mu_1,\nu_1, \ldots, \mu_\ell, \nu_\ell}
&= \prod_{j=1}^{k} \mathbf{E}[\mathcal{X}(\lambda_j)^{(\mu_j-\nu_j) \ord(n_j+\alpha, \lambda_j)}] \\
&\quad \times 
\prod_{\lambda \in \Lambda, \, \lambda \neq \lambda_1,\ldots,\lambda_k} 
\mathbf{E}[\mathcal{X}(\lambda)^{(\mu_1-\nu_1) \ord(n_1+\alpha, \lambda)+ \cdots +(\mu_\ell-\nu_\ell) \ord(n_\ell+\alpha, \lambda)}]. 
\end{align*}
As a result, we see that $G_{\mu_1,\nu_1, \ldots, \mu_\ell, \nu_\ell}=0$ if $\mu_j \neq \nu_j$ for some $1 \leq j \leq k$. 
Hence the terms in \eqref{eq:10040410} with $\mu_j \neq \nu_j$ for some $1 \leq j \leq k$ disappear. 
Furthermore, we have
\begin{gather*}
G_{\mu_1,\nu_1, \ldots, \mu_\ell, \nu_\ell} 
= \mathbf{E}[\mathbb{X}_\alpha(n_{k+1})^{\mu_{k+1}} \overline{\mathbb{X}_\alpha(n_{k+1})}^{\nu_{k+1}} 
\cdots \mathbb{X}_\alpha(n_\ell)^{\mu_\ell} \overline{\mathbb{X}_\alpha(n_\ell)}^{\nu_\ell}]
\end{gather*}
if $\mu_j=\nu_j$ for all $1 \leq j \leq k$ by definition. 
Thus we deduce from \eqref{eq:10040410} that
\begin{align*}
&g(w; \mathbb{X}_\alpha(n_1), \ldots, \mathbb{X}_\alpha(n_k), \mathcal{Y}) \\
&= \sum_{\mu_1=0}^{\infty} \cdots \sum_{\mu_k=0}^{\infty} 
\sum_{\mu_{k+1}=0}^{\infty} \sum_{\nu_{k+1}=0}^{\infty} \cdots \sum_{\mu_\ell=0}^{\infty} \sum_{\nu_\ell=0}^{\infty}
\frac{(-1)^{\mu_1+\cdots+\mu_k}}{(\mu_1!)^2 \cdots (\mu_k!)^2} 
\frac{(\frac{i}{2})^{\mu_{k+1}+\nu_{k+1}+\cdots+\mu_\ell+\nu_\ell}}{\mu_{k+1}! \nu_{k+1}! \cdots \mu_\ell! \nu_\ell!} \\
&\qquad \times 
\mathbf{E}[\mathbb{X}_\alpha(n_{k+1})^{\mu_{k+1}} \overline{\mathbb{X}_\alpha(n_{k+1})}^{\nu_{k+1}} 
\cdots \mathbb{X}_\alpha(n_\ell)^{\mu_\ell} \overline{\mathbb{X}_\alpha(n_\ell)}^{\nu_\ell}] \\
&\qquad \times 
\Big(\frac{|w_1|}{2}\Big)^{2\mu_1} \cdots \Big(\frac{|w_k|}{2}\Big)^{2\mu_k}
c_{k+1}^{\mu_{k+1}} \overline{c_{k+1}}^{\nu_{k+1}} \cdots c_\ell^{\mu_\ell} \overline{c_\ell}^{\nu_\ell}
\overline{w_{k+1}}^{\mu_{k+1}+ \cdots +\mu_\ell} w_{k+1}^{\nu_{k+1}+ \cdots +\nu_\ell} \\
&= g(w_1; \mathbb{X}_\alpha(n_1)) \cdots g(w_k; \mathbb{X}_\alpha(n_k)) \, 
\mathfrak{g}(w_{k+1}) 
\end{align*}
by using \eqref{eq:10040057}, where we put
\begin{align*}
\mathfrak{g}(w_{k+1})
&= \sum_{\mu_{k+1}=0}^{\infty} \sum_{\nu_{k+1}=0}^{\infty} \cdots \sum_{\mu_\ell=0}^{\infty} \sum_{\nu_\ell=0}^{\infty}
\frac{(\frac{i}{2})^{\mu_{k+1}+\nu_{k+1}+\cdots+\mu_\ell+\nu_\ell}}{\mu_{k+1}! \nu_{k+1}! \cdots \mu_\ell! \nu_\ell!} \\
&\qquad \times
\mathbf{E}[\mathbb{X}_\alpha(n_{k+1})^{\mu_{k+1}} \overline{\mathbb{X}_\alpha(n_{k+1})}^{\nu_{k+1}} 
\cdots \mathbb{X}_\alpha(n_\ell)^{\mu_\ell} \overline{\mathbb{X}_\alpha(n_\ell)}^{\nu_\ell}] \\
&\qquad \times 
c_{k+1}^{\mu_{k+1}} \overline{c_{k+1}}^{\nu_{k+1}} \cdots c_\ell^{\mu_\ell} \overline{c_\ell}^{\nu_\ell}
\overline{w_{k+1}}^{\mu_{k+1}+ \cdots +\mu_\ell} w_{k+1}^{\nu_{k+1}+ \cdots +\nu_\ell}.
\end{align*}
Here, we can confirm the identity $\mathfrak{g}(w_{k+1})=g(w_{k+1}; \mathcal{Y})$ by using the expansion of $\psi_w(z)$ as in \eqref{eq:10040314}. 
Therefore the equality
\begin{gather*}
g(w; \mathbb{X}_\alpha(n_1), \ldots, \mathbb{X}_\alpha(n_k), \mathcal{Y})
= g(w_1; \mathbb{X}_\alpha(n_1)) \cdots g(w_k; \mathbb{X}_\alpha(n_k)) g(w_{k+1}; \mathcal{Y})
\end{gather*}
holds, and we obtain the desired result. 
\end{proof}

\section{Proof of the limit theorem}\label{sec:3}
Let $\sigma>1/2$ be a fixed real number, and let $\alpha$ be an algebraic number satisfying $0<\alpha \leq1$. 
For $T>0$, we define
\begin{gather}\label{eq:10051642}
g_T(w; \sigma, \alpha)
= \frac{1}{T} \int_{0}^{T} \psi_w(\zeta(\sigma+it, \alpha)) \,dt, 
\end{gather}
where $\psi_w(z)=\exp(i \RE(z \overline{w}))$ for $z,w \in \mathbb{C}$ as before. 
In this section, we show that the function $g_T(w; \sigma, \alpha)
$ converges to 
\begin{gather*}
g(w; \sigma, \mathbb{X}_\alpha)
= \mathbf{E}\left[\psi_w(\zeta(\sigma, \mathbb{X}_\alpha))\right]
\end{gather*}
as $T \to\infty$ uniformly in the region $|w| \leq R$ for any $R>0$, where $\zeta(\sigma,  \mathbb{X}_\alpha)$ is the random variable of \eqref{eq:10010000}. 
For this, we truncate the series of $\zeta(s,\alpha)$ and $\zeta(\sigma, \mathbb{X}_\alpha)$ as 
\begin{gather}\label{eq:10121134}
\zeta_N(s,\alpha)
= \sum_{n=0}^{N} \frac{1}{(n+\alpha)^s}
\quad\text{and}\quad
\zeta_N(\sigma, \mathbb{X}_\alpha)
= \sum_{n=0}^{N} \frac{\mathbb{X}_\alpha(n)}{(n+\alpha)^\sigma}
\end{gather}
with an integer $N \geq0$, and define
\begin{align}
g_{T,N}(w; \sigma, \alpha)
&= \frac{1}{T} \int_{0}^{T} \psi_w(\zeta_N(\sigma+it, \alpha)) \,dt, 
\label{eq:10051753} \\
g_N(w; \sigma, \mathbb{X}_\alpha)
&= \mathbf{E}\left[\psi_w(\zeta_N(\sigma, \mathbb{X}_\alpha))\right]
\nonumber
\end{align}
similarly to the above. 
Then we have the following propositions. 

\begin{proposition}\label{prop:3.1}
Let $\sigma>1/2$ be a fixed real number, and let $\alpha$ be an algebraic number satisfying $0<\alpha \leq1$. 
Take an integer $N \geq2$ arbitrary. 
For any $R>0$ and $\epsilon>0$, there exists a positive real number $T_0=T_0(\alpha,N,R,\epsilon)$ such that the inequality
\begin{gather*}
|g_{T,N}(w; \sigma, \alpha)-g_N(w; \sigma, \mathbb{X}_\alpha)|
< \epsilon
\end{gather*}
holds for all $T \geq T_0$ in the region $|w| \leq R$. 
\end{proposition}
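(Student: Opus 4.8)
The plan is to expand the additive character $\psi_w$ through its power series \eqref{eq:10040314} and to compare the resulting terms on the two sides. Writing
\begin{gather*}
\zeta_N(\sigma+it,\alpha) = \sum_{n=0}^{N} (n+\alpha)^{-\sigma}(n+\alpha)^{-it}, \\
\overline{\zeta_N(\sigma+it,\alpha)} = \sum_{n=0}^{N} (n+\alpha)^{-\sigma}(n+\alpha)^{it},
\end{gather*}
and expanding the $\mu$-th and $\nu$-th powers by the multinomial theorem, every monomial appearing in $\zeta_N(\sigma+it,\alpha)^\mu\,\overline{\zeta_N(\sigma+it,\alpha)}^\nu$ is a constant (a multinomial coefficient times a product of powers of $(n+\alpha)^{-\sigma}$) times a factor of the shape $\big(\prod_{n=0}^{N}(n+\alpha)^{c_n}\big)^{-it}$ with integers $c_n$ satisfying $\sum_n|c_n|\le\mu+\nu$. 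The analogous expansion of $\zeta_N(\sigma,\mathbb{X}_\alpha)^\mu\,\overline{\zeta_N(\sigma,\mathbb{X}_\alpha)}^\nu$ produces exactly the same constants, now multiplied by $\prod_{n=0}^{N}\mathbb{X}_\alpha(n)^{c_n}$ in place of $\big(\prod_n(n+\alpha)^{c_n}\big)^{-it}$, using $\overline{\mathbb{X}_\alpha(n)}=\mathbb{X}_\alpha(n)^{-1}$ almost surely. Since $|\zeta_N(\sigma+it,\alpha)|\le A_N:=\sum_{n=0}^{N}(n+\alpha)^{-\sigma}$ for every $t$ and $|\zeta_N(\sigma,\mathbb{X}_\alpha)|\le A_N$ almost surely, both power series in $\mu,\nu$ converge absolutely and uniformly, so $g_{T,N}$ and $g_N$ may be evaluated by integrating, respectively taking expectation, term by term.

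The comparison then rests on two elementary facts. First, for $y:=\prod_{n=0}^{N}(n+\alpha)^{c_n}>0$ one has $\frac1T\int_0^T y^{-it}\,dt=1$ if $y=1$, and otherwise $\frac1T\int_0^T y^{-it}\,dt=\frac{1-e^{-iT\log y}}{iT\log y}$, of absolute value $\le 2/(T|\log y|)$. Second, by \eqref{eq:10010220} applied with exponents $c_0,\ldots,c_N$, the expectation $\mathbb{E}\big[\prod_{n=0}^{N}\mathbb{X}_\alpha(n)^{c_n}\big]$ equals $1$ if $y=1$ and $0$ otherwise; this also covers the case $\alpha=1$, where the $n=0$ factors are trivial on both sides. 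Hence, monomial by monomial, the contribution to $g_{T,N}-g_N$ vanishes when $y=1$ (the two sides giving the identical constant) and is $O(1/(T|\log y|))$ when $y\ne1$ (the random side contributing nothing). I would first choose an integer $M=M(\alpha,N,R,\epsilon)$ so large that $\sum_{k>M}(RA_N)^k/k!<\epsilon/3$; since $\big|\frac1T\int_0^T\zeta_N(\sigma+it,\alpha)^\mu\,\overline{\zeta_N(\sigma+it,\alpha)}^\nu\,dt\big|\le A_N^{\mu+\nu}$ and likewise $\big|\mathbb{E}[\zeta_N(\sigma,\mathbb{X}_\alpha)^\mu\,\overline{\zeta_N(\sigma,\mathbb{X}_\alpha)}^\nu]\big|\le A_N^{\mu+\nu}$, this bounds the tails $\mu+\nu>M$ of both $g_{T,N}$ and $g_N$ by $\epsilon/3$ each, uniformly for $|w|\le R$.

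It then remains to treat the finitely many monomials with $\mu+\nu\le M$: the diagonal ones ($y=1$) cancel exactly, and for the off-diagonal ones I would use that the finite set of values $y=\prod_{n=0}^{N}(n+\alpha)^{c_n}\ne1$ arising from tuples with $\sum_n|c_n|\le M$ has $\delta:=\min|\log y|>0$; estimating each such term by $2/(T\delta)$ times its coefficient and summing (the multinomial theorem collapses $\sum_{|\boldsymbol\mu|=\mu}\binom{\mu}{\boldsymbol\mu}\prod_n(n+\alpha)^{-\sigma\mu_n}=A_N^{\mu}$) gives a contribution $\le \frac{2e^{RA_N}}{T\delta}$. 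Choosing $T_0=T_0(\alpha,N,R,\epsilon)$ with $\frac{2e^{RA_N}}{T_0\delta}<\epsilon/3$ then yields $|g_{T,N}(w;\sigma+it,\alpha)-g_N(w;\sigma,\mathbb{X}_\alpha)|<\epsilon$ for all $T\ge T_0$ and all $|w|\le R$. The argument is essentially the classical Bohr--Jessen-type computation for a finite Dirichlet polynomial, so the only point requiring genuine care is the uniformity bookkeeping — that the term-by-term integration and expectation are legitimate and that every implied constant is independent of $t$ and of $w$ in $\{|w|\le R\}$ — which is precisely what the uniform bound $A_N$ and the finiteness of the truncated index set (once $M$ is fixed) deliver.
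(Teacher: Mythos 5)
Your proposal is correct and follows the paper's structure closely: expand $\psi_w$ as a power series, truncate at $\mu+\nu<M$ so that the tails of both $g_{T,N}$ and $g_N$ are small uniformly for $|w|\le R$, split the mean value $\frac{1}{T}\int_{0}^{T}\zeta_N(\sigma+it,\alpha)^\mu\overline{\zeta_N(\sigma+it,\alpha)}^\nu\,dt$ into diagonal and off-diagonal terms, match the diagonal term-by-term with $\mathbb{E}\left[\zeta_N(\sigma,\mathbb{X}_\alpha)^\mu\overline{\zeta_N(\sigma,\mathbb{X}_\alpha)}^\nu\right]$ via the orthogonality condition \eqref{eq:10010220}, and bound the off-diagonal contribution by $O(1/T)$. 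The one place where you genuinely diverge is in how you separate the off-diagonal products from $1$: you observe that the set of values $y=\prod_{n\le N}(n+\alpha)^{c_n}\neq1$ arising from tuples with $\sum_n|c_n|\le M$ is finite, so $\delta=\min|\log y|>0$, and fold this $\delta$ (which depends on $\alpha,N,M$, hence ultimately on $\alpha,N,R,\epsilon$) into the choice of $T_0$. The paper instead invokes a Liouville-type inequality (\cite[Theorem A.1]{Bugeaud2004}) to obtain an explicit lower bound $|\log y|\ge (2N)^{-(\omega(\alpha)+1)M}$. Your compactness argument is simpler and entirely sufficient for the existence statement of Proposition~\ref{prop:3.1}; the paper's explicit bound makes $T_0$ effective in the stated parameters, which is a stylistic gain but not needed here (and indeed the overall conclusion of Theorem~\ref{thm:1.3} is non-effective anyway, as the paper notes in Remark~\ref{rem:1.5}). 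Your handling of $\alpha=1$ (where $\mathbb{X}_\alpha(0)\equiv1$ and $(0+\alpha)^{-it}\equiv1$, so the $n=0$ factor drops out on both sides) is a correct observation that the paper leaves implicit.
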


\begin{proof}
Let $M$ be any positive integer. 
Recall that the asymptotic formula
\begin{gather*}
e^{i \theta}
= \sum_{m<M} \frac{i^m}{m!} \theta^m
+ O\left(\frac{1}{M!}|\theta|^M\right)
\end{gather*}
holds for all $\theta \in \mathbb{R}$ with an absolute implied constant. 
Then we obtain
\begin{gather*}
\psi_w(z)
= \sum_{\mu+\nu<M} \frac{(\frac{i}{2})^{\mu+\nu}}{\mu! \nu!} 
z^\mu \overline{z}^\nu \overline{w}^\mu w^\nu
+ O\left(\frac{1}{M!}|zw|^M\right)
\end{gather*}
for all $z,w \in \mathbb{C}$. 
Applying this, we calculate the right-hand side of \eqref{eq:10051753} as
\begin{align}
&g_{T,N}(w; \sigma, \alpha) 
\label{eq:10052032} \\
&= \sum_{\mu+\nu<M} \frac{(\frac{i}{2})^{\mu+\nu}}{\mu! \nu!} 
\left( \frac{1}{T} \int_{0}^{T} \zeta_N(\sigma+it, \alpha)^\mu \overline{\zeta_N(\sigma+it, \alpha)}^\nu \,dt \right)
\overline{w}^\mu w^\nu
+ E_1,
\nonumber
\end{align}
where the error term is evaluated as
\begin{gather*}
E_1
\ll \frac{R^M}{M!} \frac{1}{T} \int_{0}^{T} |\zeta_N(\sigma+it, \alpha)|^M \,dt
\end{gather*}
for $|w| \leq R$ with an absolute implied constant. 
By definition, we have 
\begin{gather*}
|\zeta_N(\sigma+it, \alpha)|
\leq \sum_{n=0}^{N} \frac{1}{(n+\alpha)^{1/2}}
\ll_\alpha \sqrt{N}
\end{gather*}
for any $\sigma>1/2$. 
Hence we obtain $E_1 \ll (c(\alpha) R \sqrt{N})^M/M! $ with a positive constant $c(\alpha)$ depending only on $\alpha$. 
As a result, there exists an integer $M=M(\alpha,N,R,\epsilon)$ such that $|E_1|<\epsilon/3$ holds for each $\epsilon>0$. 
We can choose $M$ so that $M \geq2$. 
Then, the main term of \eqref{eq:10052032} is evaluated as follows. 
We have 
\begin{align*}
&\frac{1}{T} \int_{0}^{T} \zeta_N(\sigma+it, \alpha)^\mu \overline{\zeta_N(\sigma+it, \alpha)}^\nu \,dt \\
&= \sum_{m_1=0}^{N} \cdots \sum_{m_\mu=0}^{N} \sum_{n_1=0}^{N} \cdots \sum_{n_\nu=0}^{N}
\frac{1}{(m_1+\alpha)^\sigma \cdots (m_\mu+\alpha)^\sigma} \\
&\qquad \times 
\frac{1}{(n_1+\alpha)^\sigma \cdots (n_\nu+\alpha)^\sigma}
\frac{1}{T} \int_{0}^{T} \left(\frac{(m_1+\alpha) \cdots (m_\mu+\alpha)}{(n_1+\alpha) \cdots (n_\nu+\alpha)}\right)^{-it} \,dt \\
&= \mathop{\sum_{m_1=0}^{N} \cdots \sum_{m_\mu=0}^{N} \sum_{n_1=0}^{N} \cdots \sum_{n_\nu=0}^{N}}
\limits_{(m_1+\alpha) \cdots (m_\mu+\alpha) = (n_1+\alpha) \cdots (n_\nu+\alpha)}
\frac{1}{(m_1+\alpha)^\sigma \cdots (m_\mu+\alpha)^\sigma} 
\frac{1}{(n_1+\alpha)^\sigma \cdots (n_\nu+\alpha)^\sigma} \\
&\qquad 
+\mathop{\sum_{m_1=0}^{N} \cdots \sum_{m_\mu=0}^{N} \sum_{n_1=0}^{N} \cdots \sum_{n_\nu=0}^{N}}
\limits_{(m_1+\alpha) \cdots (m_\mu+\alpha) \neq (n_1+\alpha) \cdots (n_\nu+\alpha)}
\frac{1}{(m_1+\alpha)^\sigma \cdots (m_\mu+\alpha)^\sigma} \\
&\qquad\qquad \times 
\frac{1}{(n_1+\alpha)^\sigma \cdots (n_\nu+\alpha)^\sigma}
\frac{1}{iT} \left\{1-\left(\frac{(m_1+\alpha) \cdots (m_\mu+\alpha)}{(n_1+\alpha) \cdots (n_\nu+\alpha)}\right)^{-iT}\right\}\\
&\qquad\qquad \times 
\left|\log{\frac{(m_1+\alpha) \cdots (m_\mu+\alpha)}{(n_1+\alpha) \cdots (n_\nu+\alpha)}}\right|^{-1} \\
&=S_1+S_2, 
\end{align*}
say. 
Recall that the random variables $\mathbb{X}_\alpha(n)$ satisfy \eqref{eq:10010220}. 
Then the first term $S_1$ is
\begin{align}\label{eq:10052256}
S_1
&= \sum_{m_1=0}^{N} \cdots \sum_{m_\mu=0}^{N} \sum_{n_1=0}^{N} \cdots \sum_{n_\nu=0}^{N}
\frac{1}{(m_1+\alpha)^\sigma \cdots (m_\mu+\alpha)^\sigma} \\
&\qquad \times 
\frac{1}{(n_1+\alpha)^\sigma \cdots (n_\nu+\alpha)^\sigma}
\mathbf{E}\left[\mathbb{X}_\alpha(m_1) \cdots \mathbb{X}_\alpha(m_\mu) 
\mathbb{X}_\alpha(n_1)^{-1} \cdots \mathbb{X}_\alpha(n_\nu)^{-1} \right] 
\nonumber \\
&= \mathbf{E}\left[\zeta_N(\sigma, \mathbb{X}_\alpha)^\mu \overline{\zeta_N(\sigma, \mathbb{X}_\alpha)}^\nu\right]. 
\nonumber
\end{align}
On the other hand, the second term $S_2$ is evaluated as follows. 
Put 
\begin{gather*}
P(x)
= (m_1+x) \cdots (m_\mu+x) - (n_1+x) \cdots (n_\nu+x), 
\end{gather*}
and denote its degree and height by $\deg(P)$ and $\mathrm{ht}(P)$, respectively. 
For $\mu+\nu<M$, $\deg(P)$ is at most $M$. 
For $m_1,\ldots,m_\mu, n_1,\ldots,n_\nu \leq N$, we also find that $\mathrm{ht}(P)$ is less than the height of $(N+x)^M$, which is at most $(2N)^M$. 
We also denote by $\deg(\alpha)$ and $\mathrm{ht}(\alpha)$ the degree and height of $\alpha$, respectively. 
Then we obtain 
\begin{align*}
|P(\alpha)|
&\geq (\deg(P)+1)^{1-\deg(\alpha)} (\deg(\alpha)+1)^{-\deg(P)/2}
\mathrm{ht}(P)^{1-\deg(\alpha)} \mathrm{ht}(\alpha)^{-\deg(P)} \\
&\geq (2N)^{-\omega(\alpha) M}
\end{align*}
by applying \cite[Theorem A.1]{Bugeaud2004}, where $\omega(\alpha)$ is a positive constant that depends only on $\alpha$. 
Hence we derive
\begin{align*}
&\left|\log \frac{(m_1+\alpha) \cdots (m_\mu+\alpha)}{(n_1+\alpha) \cdots (n_\nu+\alpha)}\right| \\
&\geq \frac{|(m_1+\alpha) \cdots (m_\mu+\alpha)-(n_1+\alpha) \cdots (n_\nu+\alpha)|}
{\max\{ (m_1+\alpha) \cdots (m_\mu+\alpha), (n_1+\alpha) \cdots (n_\nu+\alpha) \}} \\
&\geq (2N)^{-(\omega(\alpha)+1) M}. 
\end{align*}
Therefore we arrive at
\begin{align}\label{eq:10052257}
S_2
&\ll \left(\sum_{n=0}^{N} \frac{1}{(n+\alpha)^\sigma}\right)^{\mu+\nu} 
\frac{1}{T} (2N)^{(\omega(\alpha)+1) M} \\
&\leq \frac{1}{T} (d(\alpha) N)^{(\omega(\alpha)+1) M}
\nonumber
\end{align}
with a positive constant $d(\alpha)$ depending only on $\alpha$. 
Combining \eqref{eq:10052256} and \eqref{eq:10052257}, we deduce the formula
\begin{align*}
&\frac{1}{T} \int_{0}^{T} \zeta_N(\sigma+it, \alpha)^\mu \overline{\zeta_N(\sigma+it, \alpha)}^\nu \,dt \\
&= \mathbf{E}\left[\zeta_N(\sigma, \mathbb{X}_\alpha)^\mu \overline{\zeta_N(\sigma, \mathbb{X}_\alpha)}^\nu\right]
+ O\left(\frac{1}{T} (d(\alpha) N)^{(\omega(\alpha)+1) M}\right), 
\end{align*}
where the implied constant is absolute. 
Furthermore, it yields
\begin{align}
&\sum_{\mu+\nu<M} \frac{(\frac{i}{2})^{\mu+\nu}}{\mu! \nu!} 
\left( \frac{1}{T} \int_{0}^{T} \zeta_N(\sigma+it, \alpha)^\mu \overline{\zeta_N(\sigma+it, \alpha)}^\nu \,dt \right)
\overline{w}^\mu w^\nu
\label{eq:10052322} \\
&= \sum_{\mu+\nu<M} \frac{(\frac{i}{2})^{\mu+\nu}}{\mu! \nu!} 
\mathbf{E}\left[\zeta_N(\sigma, \mathbb{X}_\alpha)^\mu \overline{\zeta_N(\sigma, \mathbb{X}_\alpha)}^\nu\right]
\overline{w}^\mu w^\nu 
+E_2, 
\nonumber
\end{align}
where the error term is estimated as
\begin{align*}
E_2
&\ll \frac{1}{T} (d(\alpha) N)^{(\omega(\alpha)+1) M}
\sum_{\mu+\nu<M} \frac{1}{\mu! \nu!} 
\Big(\frac{|w|}{2}\Big)^{\mu+\nu} \\
&\ll \frac{1}{T} (d(\alpha) N)^{(\omega(\alpha)+1) M} \exp(R) 
\end{align*}
for $|w| \leq R$.
Thus, there exists a positive real number $T_0=T_0(\alpha,M,N,R,\epsilon)$ such that $|E_2|<\epsilon/3$ holds for all $T \geq T_0$. 
Finally, we obtain
\begin{align}
&g_N(w; \sigma, \mathbb{X}_\alpha) 
\label{eq:10052323} \\
&= \sum_{\mu+\nu<M} \frac{(\frac{i}{2})^{\mu+\nu}}{\mu! \nu!} 
\mathbf{E}\left[\zeta_N(\sigma, \mathbb{X}_\alpha)^\mu \overline{\zeta_N(\sigma, \mathbb{X}_\alpha)}^\nu\right]
\overline{w}^\mu w^\nu
+ E_3
\nonumber
\end{align}
similarly to \eqref{eq:10052032}, where
\begin{gather*}
E_3
\ll \frac{R^M}{M!} \mathbf{E}\left[|\zeta_N(\sigma, \mathbb{X}_\alpha)|^M\right]
\ll \frac{(c(\alpha) R \sqrt{N})^M}{M!}
\end{gather*}
with absolute implied constants.
Then we again obtain $|E_3|< \epsilon/3$ by recalling the choice of $M$. 
The desired result follows from the above formulas \eqref{eq:10052032}, \eqref{eq:10052322}, \eqref{eq:10052323} with the above error estimates. 
\end{proof}

\begin{proposition}\label{prop:3.2}
Let $\sigma>1/2$ be a fixed real number, and let $\alpha$ be an algebraic number satisfying $0<\alpha \leq1$. 
For any $R>0$ and $\epsilon>0$, there exists an integer $N_0=N_0(\sigma,R,\epsilon) \geq0$ such that the inequalities
\begin{align*}
\limsup_{T \to\infty} 
|g_T(w; \sigma, \alpha)
&-g_{T,N}(w; \sigma, \alpha)|
< \epsilon, \\
|g(w; \sigma, \mathbb{X}_\alpha)&-g_N(w; \sigma, \mathbb{X}_\alpha)|
< \epsilon
\end{align*}
hold for all $N \geq N_0$ in the region $|w| \leq R$. 
\end{proposition}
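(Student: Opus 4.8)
The plan is to reduce both inequalities to mean-square estimates for the tails $\zeta(\sigma+it,\alpha)-\zeta_N(\sigma+it,\alpha)$ and $\zeta(\sigma,\mathbb{X}_\alpha)-\zeta_N(\sigma,\mathbb{X}_\alpha)$, exploiting that the additive character $\psi_w$ is Lipschitz. Since $|e^{ia}-e^{ib}|\le|a-b|$ for real $a,b$, one has
\begin{gather*}
|\psi_w(z)-\psi_w(z')|
= \big|e^{i\RE(z\overline{w})}-e^{i\RE(z'\overline{w})}\big|
\le \big|\RE((z-z')\overline{w})\big| \\
\le |w|\,|z-z'|
\le R\,|z-z'|
\end{gather*}
for $|w|\le R$. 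Hence, by the Cauchy--Schwarz inequality,
\begin{align*}
|g_T(w;\sigma+it,\alpha)-g_{T,N}(w;\sigma+it,\alpha)|
&\le \frac{R}{T}\int_0^T|\zeta(\sigma+it,\alpha)-\zeta_N(\sigma+it,\alpha)|\,dt \\
&\le R\left(\frac{1}{T}\int_0^T|\zeta(\sigma+it,\alpha)-\zeta_N(\sigma+it,\alpha)|^2\,dt\right)^{1/2},
\end{align*}
and likewise $|g(w;\sigma,\mathbb{X}_\alpha)-g_N(w;\sigma,\mathbb{X}_\alpha)|\le R\,\mathbb{E}\big[|\zeta(\sigma,\mathbb{X}_\alpha)-\zeta_N(\sigma,\mathbb{X}_\alpha)|^2\big]^{1/2}$. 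I would then show that each quantity under a square root is at most $\sum_{n>N}n^{-2\sigma}$, up to a constant depending only on $\sigma$ in the integral case; since $2\sigma>1$ this tends to $0$ as $N\to\infty$, so $N_0$ can be chosen making both right-hand sides smaller than $\epsilon$ for all $N\ge N_0$, uniformly in $|w|\le R$. The bound is uniform in $\alpha\in(0,1]$ since $(n+\alpha)^{-2\sigma}<n^{-2\sigma}$ for $n\ge1$, which is why $N_0$ need not depend on $\alpha$.

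For the probabilistic part, write $\zeta(\sigma,\mathbb{X}_\alpha)-\zeta_N(\sigma,\mathbb{X}_\alpha)=\sum_{n>N}\mathbb{X}_\alpha(n)(n+\alpha)^{-\sigma}$. As observed after Theorem \ref{thm:1.1}, formula \eqref{eq:10010220} together with $\overline{\mathbb{X}_\alpha(n)}=\mathbb{X}_\alpha(n)^{-1}$ gives $\mathbb{E}[\mathbb{X}_\alpha(m)\overline{\mathbb{X}_\alpha(n)}]=1$ for $m=n$ and $0$ otherwise, so expanding the square yields
\begin{gather*}
\mathbb{E}\big[|\zeta(\sigma,\mathbb{X}_\alpha)-\zeta_N(\sigma,\mathbb{X}_\alpha)|^2\big]
= \sum_{n=N+1}^{\infty}\frac{1}{(n+\alpha)^{2\sigma}}
\le \sum_{n=N+1}^{\infty}\frac{1}{n^{2\sigma}},
\end{gather*}
which settles the second inequality at once.

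For the integral part, the case $\sigma>1$ is immediate since $|\zeta(\sigma+it,\alpha)-\zeta_N(\sigma+it,\alpha)|\le\sum_{n>N}(n+\alpha)^{-\sigma}$ uniformly in $t$. For $1/2<\sigma<1$ I would invoke the classical mean-value theorem for the Hurwitz zeta-function,
\begin{gather*}
\lim_{T\to\infty}\frac{1}{T}\int_0^T|\zeta(\sigma+it,\alpha)-\zeta_N(\sigma+it,\alpha)|^2\,dt
= \sum_{n=N+1}^{\infty}\frac{1}{(n+\alpha)^{2\sigma}},
\end{gather*}
which follows from a mean-value theorem of Carlson type for Dirichlet series (the tail $\zeta(s,\alpha)-\zeta_N(s,\alpha)$ is of finite order in $\RE(s)>1/2$ with square-summable coefficients there), or, equivalently, from the known second-moment asymptotics $\frac{1}{T}\int_1^T|\zeta(\sigma+it,\alpha)|^2\,dt\to\zeta(2\sigma,\alpha)$ combined with the asymptotic orthogonality of the functions $(m+\alpha)^{it}$; passing to $\limsup_{T\to\infty}$ then gives the first inequality. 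The borderline value $\sigma=1$ requires a minor adjustment, because the simple pole of $\zeta(s,\alpha)$ at $s=1$ makes $|\zeta(1+it,\alpha)-\zeta_N(1+it,\alpha)|$ fail to be square-integrable near $t=0$; but $\psi_w$ has modulus one, so $\frac{1}{T}\int_0^1\big|\psi_w(\zeta(1+it,\alpha))-\psi_w(\zeta_N(1+it,\alpha))\big|\,dt\le 2/T$, while on $[1,T]$ the function $\zeta(1+it,\alpha)$ is regular and the mean-value theorem applies there as before.

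The hardest step will be this integral mean-square estimate when $1/2<\sigma\le1$: the defining Dirichlet series of $\zeta(s,\alpha)$ does not converge in that range, so one cannot compare it with $\zeta_N$ termwise and must instead use the analytic continuation and a Carlson--Gallagher-type mean-value theorem, while keeping track of the pole at $s=1$. Everything else — the probabilistic estimate and the range $\sigma>1$ — is routine.
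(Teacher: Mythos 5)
Your proposal is correct and takes the same general strategy as the paper: exploit the Lipschitz estimate $|\psi_w(z_1)-\psi_w(z_2)|\le|w|\,|z_1-z_2|$, apply Cauchy--Schwarz to reduce to a mean-square estimate for the tail, and use orthogonality of the $\mathbb{X}_\alpha(n)$ for the probabilistic part. The probabilistic half is handled identically to the paper.

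The difference lies in how the integral mean-square estimate is produced for $1/2<\sigma\leq1$. You invoke a Carlson-type mean-value theorem to get the exact asymptotic $\frac{1}{T}\int_0^T|\zeta(\sigma+it,\alpha)-\zeta_N(\sigma+it,\alpha)|^2\,dt\to\sum_{n>N}(n+\alpha)^{-2\sigma}$, and handle the pole at $s=1$ by splitting off a fixed initial interval using $|\psi_w|\le1$. The paper instead splits at $\sigma\geq2$ versus $1/2<\sigma<2$, and in the latter range proves the needed estimate from scratch by substituting the approximate functional equation of Karatsuba--Voronin (valid for $2\pi\leq|t|\leq\pi x$, hence the same splitting-off of $[0,2\pi]$ that you anticipate) and then applying the Montgomery--Vaughan mean-value theorem, obtaining the explicit upper bound $\ll_\sigma N^{1/2-\sigma}+T^{-\sigma}\log T$. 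The two routes are essentially equivalent in difficulty: your black-box is cleaner to state, but its standard proof goes through precisely the approximate-functional-equation-plus-Montgomery--Vaughan machinery that the paper makes explicit. One small point worth spelling out: the Carlson mean-value theorem gives a limit whose \emph{rate} of convergence may depend on $\alpha$, but since you only need the $\limsup$ and the limit $\sum_{n>N}(n+\alpha)^{-2\sigma}$ is bounded uniformly by $\sum_{n>N}n^{-2\sigma}$, this does not spoil the independence of $N_0$ from $\alpha$. Your observation that the final tail estimate is uniform in $\alpha\in(0,1]$ because $n\geq1$ in the sum is exactly the reason the paper can state $N_0=N_0(\sigma,R,\epsilon)$ without $\alpha$.
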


\begin{proof}
Since the inequality $|e^{i \theta_1}-e^{i \theta_2}| \leq |\theta_1-\theta_2|$ holds for all $\theta_1,\theta_2 \in \mathbb{R}$, we have 
\begin{gather}\label{eq:10060256}
|\psi_w(z_1)-\psi_w(z_2)|
\leq |z_1-z_2| \, |w|
\end{gather}
for all $z_1, z_2, w \in \mathbb{C}$. 
By \eqref{eq:10051642}, \eqref{eq:10051753}, and $|\psi_w(z)|\leq1$, we obtain
\begin{align}\label{eq:10060249}
&|g_T(w; \sigma, \alpha)
-g_{T,N}(w; \sigma, \alpha)| \\
&\leq \frac{R}{T} \int_{2\pi}^{T} |\zeta(\sigma+it, \alpha)-\zeta_N(\sigma+it, \alpha)| \,dt
+ \frac{4\pi}{T}
\nonumber
\end{align}
for $|w| \leq R$. 
We evaluate this integral as follows. 
If $\sigma \geq2$, then we have $\zeta(s,\alpha)=\zeta_N(s,\alpha)+O(N^{-1})$ by applying \eqref{eq:10060143}. 
It yields the estimate
\begin{gather}\label{eq:10060250}
\frac{1}{T} \int_{2\pi}^{T} |\zeta(\sigma+it, \alpha)-\zeta_N(\sigma+it, \alpha)| \,dt
\ll N^{-1}. 
\end{gather}
If $1/2<\sigma<2$, then we apply the formula \cite[Theorem 1 on p.\,78]{KaratsubaVoronin1992} 
\begin{gather*}
\zeta(s,\alpha)
= \sum_{0 \leq n \leq x} \frac{1}{(n+\alpha)^s}
+ \frac{x^{1-s}}{s-1}
+ O\left(x^{-\sigma}\right)
\end{gather*}
for $2\pi \leq |t| \leq \pi x$. 
Taking $x= T/\pi$, we obtain
\begin{align*}
&\frac{1}{T} \int_{2\pi}^{T} |\zeta(\sigma+it, \alpha)-\zeta_N(\sigma+it, \alpha)| \,dt \\
&\ll \left( \frac{1}{T} \int_{0}^{T} \left|\sum_{N<n \leq x} \frac{1}{(n+\alpha)^{\sigma+it}}\right|^2 \,dt \right)^{1/2}
+ \frac{1}{T} (\log{T}) x^{1-\sigma}
+ x^{-\sigma}
\end{align*}
due to the Cauchy--Schwarz inequality. 
Furthermore, it holds that 
\begin{gather*}
\int_{0}^{T} \left|\sum_{N<n \leq x} \frac{1}{(n+\alpha)^{\sigma+it}}\right|^2 \,dt
\ll (T+x) N^{1-2\sigma} 
\end{gather*}
by \cite[Corollary 2]{MontgomeryVaughan1974}, where the implied constant depends only on $\sigma$. 
Hence we have 
\begin{gather}\label{eq:10060251}
\frac{1}{T} \int_{2\pi}^{T} |\zeta(\sigma+it, \alpha)-\zeta_N(\sigma+it, \alpha)| \,dt
\ll_\sigma N^{1/2-\sigma} + T^{-\sigma} \log{T}
\end{gather}
in this case. 
Using \eqref{eq:10060250} for $\sigma \geq2$ and \eqref{eq:10060251} for $1/2<\sigma<2$, we obtain
\begin{gather*}
\limsup_{T \to\infty} 
|g_T(w; \sigma, \alpha)
-g_{T,N}(w; \sigma, \alpha)|
\ll_\sigma R\, N^{\max(-1,1/2-\sigma)}
\end{gather*}
by \eqref{eq:10060249}. 
This yields the first inequality of the result. 
Then, we prove the second inequality. 
By \eqref{eq:10060256} and the Cauchy--Schwarz inequality, we have 
\begin{align*}
|g(w; \sigma, \mathbb{X}_\alpha)-g_N(w; \sigma, \mathbb{X}_\alpha)|
&\leq R\, \mathbf{E}\left[|\zeta(\sigma,\mathbb{X}_\alpha)-\zeta_N(\sigma,\mathbb{X}_\alpha)|\right] \\
&\leq R\, \mathbf{E}\left[|\zeta(\sigma,\mathbb{X}_\alpha)-\zeta_N(\sigma,\mathbb{X}_\alpha)|^2\right]^{1/2}. 
\end{align*}
Here, we recall that \eqref{eq:10010000} is convergent for $\sigma>1/2$ almost surely. 
Thus we derive 
\begin{gather*}
\mathbf{E}\left[|\zeta(\sigma,\mathbb{X}_\alpha)-\zeta_N(\sigma,\mathbb{X}_\alpha)|^2\right]
= \sum_{m,n>N} \frac{\mathbf{E}[\mathbb{X}_\alpha(m) \overline{\mathbb{X}_\alpha(n)}]}{(m+\alpha)^\sigma (n+\alpha)^\sigma}
= \sum_{n>N} \frac{1}{(n+\alpha)^{2\sigma}}
\end{gather*}
by applying \eqref{eq:10010220}. 
Therefore we obtain
\begin{gather*}
|g(w; \sigma, \mathbb{X}_\alpha)-g_N(w; \sigma, \mathbb{X}_\alpha)|
\ll R\, N^{1/2-\sigma}
\end{gather*}
with the implied constant depending only on $\sigma$, which completes the proof. 
\end{proof}

\begin{proof}[Proof of Theorem \ref{thm:1.2}]
Let $N_0=N_0(\sigma,R,\epsilon)$ be the integer as in Proposition \ref{prop:3.2}. 
Then we choose a positive real number $T_0$ as $T_0=T_0(\sigma,\alpha,N_0,R,\epsilon)$ of Proposition \ref{prop:3.1}. 
With these setting, we obtain the inequality
\begin{align*}
|g_T(w; \sigma, \alpha)
- g(w; \sigma, \mathbb{X}_\alpha)| 
&\leq |g_T(w; \sigma, \alpha)
- g_{T,N_0}(w; \sigma, \alpha)| \\
&\qquad
+ |g_{T,N_0}(w; \sigma, \alpha)-g_{N_0}(w; \sigma, \mathbb{X}_\alpha)| \\
&\qquad
+ |g_{N_0}(w; \sigma, \mathbb{X}_\alpha)-g(w; \sigma, \mathbb{X}_\alpha)| \\
&< 3\epsilon
\end{align*}
for all $T \geq T_0$ in the region $|w| \leq R$. 
Denote by $P_{\sigma,\alpha,T}$ the probability measure defined as in \eqref{eq:10010305}, and denote by $P_{\sigma,\alpha}$ the law of the random variable $\zeta(\sigma,\mathbb{X}_\alpha)$. 
The characteristic functions of these probability measures are represented as 
\begin{gather*}
\int_{\mathbb{C}} \psi_w(z) \,P_{\sigma,\alpha,T}(dz)
= \frac{1}{T} \int_{0}^{T} \psi_w(\zeta(\sigma+it, \alpha)) \,dt
= g_{T}(w; \sigma, \alpha), \\
\int_{\mathbb{C}} \psi_w(z) \,P_{\sigma,\alpha}(dz)
= \mathbf{E}\left[\psi_w(\zeta(\sigma, \mathbb{X}_\alpha))\right]
= g(w; \sigma, \mathbb{X}_\alpha). 
\end{gather*}
Thus the characteristic function of $P_{\sigma,\alpha,T}$ converges to that of $P_{\sigma,\alpha}$ for any $w \in \mathbb{C}$. 
Hence we conclude that the probability measure $P_{\sigma,\alpha,T}$ converges weakly to $P_{\sigma,\alpha}$ as $T \to\infty$ by L\'{e}vy's criterion. 
See \cite[Theorem B.5.1]{Kowalski2021}. 
\end{proof}

\section{A variant of the Cassels lemma}\label{sec:4}
Let $\alpha \in \mathcal{A}$, and denote by $\mathfrak{a}$ the ideal denominator of $\alpha$ in the algebraic field $K=\mathbb{Q}(\alpha)$. 
Then $(n+\alpha) \mathfrak{a}$ is an integral ideal of $K$ for any rational integer $n$. 
The following lemma by Cassels \cite{Cassels1961} is fundamental to study the Hurwitz zeta-function with algebraic irrational parameter. 

\begin{clemma}
Let $\alpha \in \mathcal{A}$. 
Then there exists an integer $N_0=N_0(\alpha)>10^6$ depending on $\alpha$ with the following property. 
Suppose that $N \geq N_0$ and put $M = \lfloor 10^{-6} N \rfloor$. 
Then at least $51M/100$ integers in $N<n \leq N+M$ are such that $(n+\alpha) \mathfrak{a}$ is divisible by a prime ideal $\mathfrak{p}$ which does not divide $(m+\alpha) \mathfrak{a}$ for any integer $0 \leq m \leq N+M$ with $m \neq n$. 
\end{clemma}

As in Lemma \ref{lem:2.4}, we put $\mathcal{L}(N)=\{n \in \mathbb{Z} \mid N<n \leq N \log{N} \}$, and define $\mathcal{K}_\alpha(N)$ as the subset of $\mathcal{L}(N)$ consisting of all integers $n$ such that $(n+\alpha) \mathfrak{a}$ is divisible by a prime ideal $\mathfrak{p}$ not dividing $(m+\alpha) \mathfrak{a}$ for any integer $0 \leq m \leq N \log{N}$ with $m \neq n$. 
The ultimate goal of this section is to prove the following result which is a weighted version of the Cassels lemma. 

\begin{proposition}\label{prop:4.1}
Let $\mathfrak{f}: \mathcal{A} \to \mathbb{R}_{>0}$ be the function defined later as in \eqref{eq:02232117}. 
Then the set $\mathcal{A}_d= \{\alpha \in \mathcal{A} \mid \mathfrak{f}(\alpha) \leq d \}$ satisfies the following property. 
Let $\sigma$ be a fixed real number with $1/2<\sigma<1$. 
For any $d \geq5$, there exists an integer $N_1=N_1(d,\sigma)$ depending only on $d$ and $\sigma$ such that the inequality 
\begin{gather*}
\sum_{n \in \mathcal{K}_\alpha(N)}\frac{1}{(n+\alpha)^\sigma}
> \frac{51}{100} \sum_{n \in \mathcal{L}(N)} \frac{1}{(n+\alpha)^\sigma}
\end{gather*}
holds for all $\alpha \in \mathcal{A}_{d}$ and $N \geq N_1$. 
\end{proposition}

Remark that the integer $N_1$ of Proposition \ref{prop:4.1} is uniform for $\alpha$ in the set $\mathcal{A}_{d}$. 
This fact is used essentially to prove Theorem \ref{thm:1.3} in Section \ref{sec:6}. 
The following proof of Proposition \ref{prop:4.1} is largely based on the method of Cassels \cite{Cassels1961}, Worley \cite{Worley1970}, Mishou \cite{Mishou2008}, and Lee--Mishou \cite{LeeMishou2020}, but we make some modifications. 
See Remark \ref{rem:4.7} for the details of the modifications. 

For any rational integer $n \geq0$, we have
\begin{gather*}
(n+\alpha) \mathfrak{a}
= \prod_{\mathfrak{p} \in J_\alpha} \mathfrak{p}^{u_n(\mathfrak{p})}, 
\end{gather*}
where $J_\alpha$ denotes the set of all prime ideals of $K$, and $u_n(\mathfrak{p})$ are non-negative integers. 
Define $P_\alpha$ as the subset of $J_\alpha$ consisting of prime ideals $\mathfrak{p}$ such that $\mathfrak{p}$ is of the first degree and unambiguous, i.e.\ $\mathrm{N}(\mathfrak{p})=p$ and $\mathfrak{p}^2 \nmid (p)$ for a rational prime $p$. 
Then we rewrite the factorization of $(n+\alpha) \mathfrak{a}$ as 
\begin{align}\label{eq:10071704}
(n+\alpha) \mathfrak{a}
= \mathfrak{b}_n \prod_{\mathfrak{p} \in P_\alpha} \mathfrak{p}^{u_n(\mathfrak{p})}, 
\end{align}
where $\mathfrak{b}_n$ is the integral containing all prime factors of $(n+\alpha) \mathfrak{a}$ which are not in $P_\alpha$. 
In the following, the norm of a prime ideal $\mathfrak{p} \in P_\alpha$ is always denoted by $p$.

\subsection{Preliminary lemmas}\label{sec:4.1}
We begin with showing preliminary lemmas toward the proof of Proposition \ref{prop:4.1}. 
Let $L$ denote the Galois closure of $K$ over $\mathbb{Q}$. 
Choose $\beta \in \mathcal{O}_K$ and $c \in \mathbb{Z}_{\geq1}$ so that $\alpha=\beta/c$. 
Note that we can determine them uniquely from $\alpha$ by fixing $c$ as the minimum one with this property. 
Take an algebraic field $M$ such that $\mathbb{Q} \subset M \subsetneq L$ and $\alpha \notin M$. 
Put $S=\mathcal{O}_M \setminus \mathfrak{q}$ for a prime ideal $\mathfrak{q}$ of $M$. 
Then $S^{-1} \mathcal{O}_L$ is a free $S^{-1}\mathcal{O}_M$-module of rank $\rho:=[L:M]$ since $S^{-1}\mathcal{O}_M$ is a principal ideal ring; see \cite[Ch.~I, Theorem 1]{Lang1994}. 
Taking a basis $\{x_1,\ldots,x_\rho\}$ of $S^{-1} \mathcal{O}_L$ over $S^{-1}\mathcal{O}_M$, we have 
\begin{align}\label{eq:03221652}
1
= a_1 x_1+\cdots+a_\rho x_\rho
\quad\text{and}\quad
\beta
= b_1 x_1+\cdots+b_\rho x_\rho
\end{align}
with some constants $a_\ell, b_\ell \in S^{-1}\mathcal{O}_{M}$. 
Then we define $\mathfrak{g}(\mathfrak{q})$ as the integral ideal of $S^{-1}\mathcal{O}_M$ generated by all elements $a_k b_\ell-a_\ell b_k$ with $k \neq \ell$. 

\begin{lemma}\label{lem:g}
The ideal $\mathfrak{g}(\mathfrak{q}) \subset S^{-1}\mathcal{O}_M$ is independent to the choice of the basis $\{x_1,\ldots,x_\rho\}$. 
Furthermore, we have $\mathfrak{g}(\mathfrak{q})=S^{-1}\mathcal{O}_M$ for all but finitely many prime ideals $\mathfrak{q}$. 
\end{lemma}

\begin{proof}
Take another basis $\{y_1,\ldots,y_\rho\}$ of $S^{-1} \mathcal{O}_L$ over $S^{-1}\mathcal{O}_M$. 
Then there exists a matrix $P=(p_{ij}) \in \mathrm{GL}_\rho(S^{-1}\mathcal{O}_M)$ such that $x_j=p_{1j}y_1+\cdots+p_{\rho j}y_\rho$ for any $j=1,\ldots,\rho$. 
Therefore, if we have 
\begin{align*}
1
= a'_1 y_1+\cdots+a'_\rho y_\rho
\quad\text{and}\quad
\beta
= b'_1 y_1+\cdots+b'_\rho y_\rho, 
\end{align*}
with $a'_j, b'_j \in S^{-1}\mathcal{O}_{M}$, then it yields that 
\begin{align*}
a'_j
= \sum_{\ell=1}^{\rho} p_{\ell j} a_\ell
\quad\text{and}\quad
b'_j
= \sum_{\ell=1}^{\rho} p_{\ell j} b_\ell. 
\end{align*}
Hence $a'_i b'_j-a'_i b'_j$ belongs to the ideal of $S^{-1}\mathcal{O}_M$ generated by all elements $a_k b_\ell-a_\ell b_k$ with $k \neq \ell$. 
It can be similarly shown that $a_k b_\ell-a_\ell b_k$ belongs to the ideal generated by all elements $a'_i b'_j-a'_i b'_j$ with $i \neq j$, and we complete the proof of the first assertion of  the lemma. 

Then we prove the second assertion. 
By the assumption $\alpha \notin M$, we can take an $M$-basis $\{z_1,\ldots,z_\rho\}$ of $L$ such that $z_1=1$ and $z_2=\beta$. 
Furthermore, we take an integer $r \in \mathbb{Z}_{\geq1}$ so that $\{rz_1,\ldots,rz_\rho\} \subset \mathcal{O}_L$. 
Denote by $\Sigma_1$ the set of all prime ideals of $M$ containing $\Delta_{L/M}(rz_1,\ldots,rz_\rho)$, where we define
\begin{align*}
\Delta_{L/M}(w_1,\ldots,w_\rho)
= \det (\mathrm{Tr}_{L/M} (w_i w_j))
\end{align*}
for $w_1,\ldots,w_\rho \in L$. 
Then, for any $\mathfrak{q} \notin \Sigma_1$, we see that $\{rz_1,\ldots,rz_\rho\}$ is a basis of $S^{-1} \mathcal{O}_L$ over $S^{-1}\mathcal{O}_M$ with $S=\mathcal{O}_M \setminus \mathfrak{q}$. 
Indeed, by taking a basis $\{x_1,\ldots,x_\rho\}$ of $S^{-1} \mathcal{O}_L$ over $S^{-1}\mathcal{O}_M$, we have $rz_i=p_{i1}x_1+\cdots+p_{i \rho}x_\rho$ with some matrix $P=(p_{ij})$. 
Then the equality
\begin{align*}
\Delta_{L/M}(rz_1,\ldots,rz_\rho)
= (\det P)^2 \Delta_{L/M}(x_1,\ldots,x_\rho)
\end{align*}
follows. 
Since the prime ideal $\mathfrak{q}$ does not contain the left-hand side, we find that $\det P$ is a unit of $S^{-1}\mathcal{O}_M$, that is, $P$ belongs to $\mathrm{GL}_\rho(S^{-1}\mathcal{O}_M)$. 
Hence $\{rz_1,\ldots,rz_\rho\}$ is a basis of $S^{-1} \mathcal{O}_L$ over $S^{-1}\mathcal{O}_M$. 
Denote by $\Sigma_2$ the set of all prime ideals of $M$ containing $r$. 
Then $1/r$ is a unit of $S^{-1} \mathcal{O}_M$ for any $\mathfrak{q} \notin \Sigma_2$. 
Note that $\Sigma=\Sigma_1 \cup \Sigma_2$ is a finite set. 
From the above, we see that $\mathfrak{g}(\mathfrak{q})=S^{-1}\mathcal{O}_M$ for any $\mathfrak{q} \notin \Sigma$. 
Indeed, using the basis $\{rz_1,\ldots,rz_\rho\}$, we have the representations of $1$ and $\beta$ as in \eqref{eq:03221652}, where
\begin{align*}
a_\ell
= 
\begin{cases}
1/r & \text{if $\ell=1$}, 
\\
0 & \text{otherwise}, 
\end{cases}
\quad\text{and}\quad
b_\ell
= 
\begin{cases}
1/r & \text{if $\ell=2$}, 
\\
0 & \text{otherwise}, 
\end{cases}
\end{align*}
due to $rz_1=r$ and $rz_2=r \beta$. 
Hence we conclude that $\mathfrak{g}(\mathfrak{q})$ is a principal ideal generated by $1/r^2$, which is equal to $S^{-1}\mathcal{O}_M$ as desired. 
\end{proof}

Denote by $G_\mathfrak{q}(\alpha; M)$ the norm of $\mathfrak{g}(\mathfrak{q})$. 
By Lemma \ref{lem:g}, we see that
\begin{align*}
G(\alpha;M)
= \prod_{\mathfrak{q}} G_\mathfrak{q}(\alpha; M) 
\end{align*}
is well-defined, where $\mathfrak{q}$ runs through all prime ideals of $M$.  
Then we define the function $G:\mathcal{A} \to \mathbb{R}_{>0}$ by $G(\alpha)=\max_{M} G(\alpha;M)$, where the maximum is taken over all algebraic fields such that $\mathbb{Q} \subset M \subsetneq L$ and $\alpha \notin M$. 
We also define the function $H:\mathcal{A} \to \mathbb{R}_{>0}$ as
\begin{align*}
H(\alpha)
= \prod_{\sigma \notin \mathrm{Gal}(L/K)} \left|\mathrm{N}_{L/\mathbb{Q}} \left(\beta-\beta^\sigma\right)\right|, 
\end{align*}
where $\sigma$ runs through all elements of $\mathrm{Gal}(L/\mathbb{Q})$ which are not in $\mathrm{Gal}(L/K)$. 
Denote by $\Delta_{L}$ the absolute value of the discriminant of $L$ over $\mathbb{Q}$. 
Let $\overline{|\alpha|}$ be the house of an algebraic number $\alpha$, that is, the maximum value of the absolute values of all conjugates of $\alpha$. 
From the above, we define 
\begin{align}\label{eq:02232117}
\mathfrak{f}(\alpha)
= \max \{G(\alpha), H(\alpha), \Delta_{L}, [L:\mathbb{Q}], \overline{|\alpha|}, \alpha^{-1} \}
\end{align}
for $\alpha \in \mathcal{A}$, and put $\mathcal{A}_d= \{\alpha \in \mathcal{A} \mid \mathfrak{f}(\alpha) \leq d \}$. 

\begin{example}\label{exa:a}
For any integer $n \geq1$, we put
\begin{align*}
\alpha_n
= \frac{n+\phi}{2n+1}
\quad\text{with}\quad
\phi
= \frac{1+\sqrt{5}}{2}. 
\end{align*}
Then we check that $\mathcal{A}_d$ contains $\alpha_n$ for all $n \geq1$ if $d \geq5$ as follows. 
Note that $K=\mathbb{Q}(\alpha_n)$ is a Galois extension over $\mathbb{Q}$, and $\mathcal{O}_K=\mathbb{Z}[\phi]$ in this case. 
We have $\alpha_n=\beta_n/c_n$ with $\beta_n=n+\phi \in \mathcal{O}_K$ and $c_n=2n \in \mathbb{Z}_{\geq1}$. 
Furthermore, if $M$ is an algebraic field such that $\mathbb{Q} \subset M \subsetneq K$, then $M=\mathbb{Q}$ must be satisfied. 
For any rational prime $q$, we see that $\{1,\phi\}$ is a basis of $S^{-1}\mathcal{O}_K$ over $S^{-1}\mathbb{Z}$, where $S=\mathbb{Z} \setminus q \mathbb{Z}$. 
Then we have 
\begin{align*}
1 = 1\cdot1+0\cdot \phi
\quad\text{and}\quad
\beta_n=n \cdot 1+1\cdot \phi, 
\end{align*}
which shows $\mathfrak{g}(q \mathbb{Z})=S^{-1}\mathbb{Z}$ for any rational prime $q$. 
Hence $G(\alpha_n)=1$ for all $n \geq1$ by the definition of $G$. 
We also obtain
\begin{align*}
H(\alpha_n)
= \mathrm{N}_{K/\mathbb{Q}} (\beta_n-\beta_n^\sigma)
= \mathrm{N}_{K/\mathbb{Q}}(\sqrt{5})
= 5, 
\end{align*}
where $\sigma \in \mathrm{Gal}(K/\mathbb{Q})$ such that $\sigma(\sqrt{5})=-\sqrt{5}$. 
Obviously, we have $\Delta_K=5$ and $[K:\mathbb{Q}]=2$. 
Finally, we see that 
\begin{align*}
\overline{|\alpha_n|}
= \max\{|\alpha_n|, |\alpha_n^\sigma|\}
\leq 2
\quad\text{and}\quad
\alpha_n^{-1}
= \frac{2n}{n+\alpha}
\leq 2. 
\end{align*}
From the above, $\mathfrak{f}(\alpha_n)=5$ for all $n \geq1$, and thus $\alpha_n \in \mathcal{A}_d$ if $d \geq5$. 
\end{example}

\begin{lemma}\label{lem:4.2}
Let $d \geq5$. 
Then, with the notation as in \eqref{eq:10071704}, there exists a positive constant $C(d)$ depending only on $d$ such that $\mathrm{N}(\mathfrak{b}_n) \leq C(d)$ holds for any $\alpha \in \mathcal{A}_{d}$ and $n \in \mathbb{Z}_{\geq0}$. 
\end{lemma}

\begin{proof}
Take a prime ideal $\mathfrak{p} \in J_\alpha$ and $v \geq1$ such that $\mathfrak{p}^v \mid \mathfrak{b}_n$. 
We consider an upper bound for the norm of $\mathfrak{p}^v$. 
Let $\mathfrak{p} \cap \mathbb{Z} =p \mathbb{Z}$ for a rational prime $p$. 
Write the factorization of $p$ in $L$ as
\begin{align}\label{eq:03281600}
p \mathcal{O}_L
= \left(\mathfrak{P}_1 \cdots \mathfrak{P}_g\right)^e, 
\end{align}
where $\mathfrak{P}_1, \ldots, \mathfrak{P}_g$ are prime ideals of $L$ with $\mathrm{N}(\mathfrak{P}_k)=p^f$. 
Furthermore, we factorize the prime ideal $\mathfrak{p}$ in $L$ as 
\begin{align}\label{eq:07122257}
\mathfrak{p} \mathcal{O}_L
= \left(\mathfrak{P}_{k_1} \cdots \mathfrak{P}_{k_m}\right)^\eta, 
\end{align}
where $k_1, \ldots, k_m \in \{1, \ldots, g \}$ and $1 \leq \eta \leq e$. 
Denote by $M_j$ the decomposition field of $\mathfrak{P}_{k_j}$ over $\mathbb{Q}$, and put $\mathfrak{q}_j=\mathfrak{P}_{k_j} \cap \mathcal{O}_{M_j}$. 
Then $\mathfrak{P}_{k_j}$ is the unique prime ideal of $L$ lying over $\mathfrak{q}_j$. 
More precisely, we see that
\begin{align}\label{eq:07122258}
\mathrm{N}(\mathfrak{q}_j)
= p
\quad\text{and}\quad
\mathfrak{q}_j \mathcal{O}_L
= \mathfrak{P}_{k_j}^e 
\end{align}
by the properties of decomposition fields. 
See \cite[Ch.~I, Proposition 13]{Lang1994}. 
Note that $\alpha \notin M_j$ must be satisfied. 
Indeed, if $\alpha \in M_j$, then $\mathfrak{q}_j$ is a prime ideal lying over $\mathfrak{p}$, and hence $\mathrm{N}(\mathfrak{p})=p$ by \eqref{eq:07122258}. 
In this case, $\mathfrak{p}^2 \mid p \mathcal{O}_K$ must be satisfied due to $\mathfrak{p} \notin P_\alpha$. 
However, this yields that $\mathfrak{P}_{k_j}^{2e} \mid p \mathcal{O}_L$ by \eqref{eq:07122258}, which contradicts that $p$ is factorized in $L$ as \eqref{eq:03281600}. 
Since $\alpha \notin M_j$, we also obtain that $\mathbb{Q} \subset M_j \subsetneq L$. 
Then, we put $v \eta= eq+r$ with $q \geq0$ and $0 \leq r <e$. 
From \eqref{eq:07122257} and \eqref{eq:07122258}, we obtain
\begin{align}\label{eq:07122339}
\mathfrak{p}^v \mathcal{O}_L
= \left(\mathfrak{P}_{k_1} \cdots \mathfrak{P}_{k_m}\right)^{v\eta} 
= \mathfrak{q}_1^q \mathcal{O}_L \cdots \mathfrak{q}_m^q \mathcal{O}_L 
\cdot \mathfrak{P}_{k_1}^r \cdots \mathfrak{P}_{k_m}^r. 
\end{align}
Hence, taking the norms, we have
\begin{align}\label{eq:07130028}
\mathrm{N}(\mathfrak{p}^v)^{[L:K]}
= \mathrm{N}(\mathfrak{q}_1^q)^{\rho} \cdots \mathrm{N}(\mathfrak{q}_m^q)^{\rho}
\cdot \mathrm{N}(\mathfrak{P}_{k_1})^r \cdots \mathrm{N}(\mathfrak{P}_{k_m})^r, 
\end{align}
where $\rho=[L:M_j]=ef$. 
Then we evaluate $\mathrm{N}(\mathfrak{q}_1^q)^{\rho} \cdots \mathrm{N}(\mathfrak{q}_m^q)^{\rho}$ in \eqref{eq:07130028} by using the function $G(\alpha)$ defined above. 
Note that $\mathfrak{q}_1^q \mathcal{O}_L \cdots \mathfrak{q}_m^q \mathcal{O}_L$ divides $(n+\alpha)\mathfrak{a} \mathcal{O}_L$ by \eqref{eq:07122339} and $\mathfrak{p}^v \mid \mathfrak{b}_n$. 
Then we obtain
\begin{align*}
(nc+\beta) \mathcal{O}_L 
\subset (n+\alpha)\mathfrak{a} \mathcal{O}_L
\subset \mathfrak{q}_j^q \mathcal{O}_L 
\end{align*}
for every $j \in \{1,\ldots,m\}$, where we choose $\beta \in \mathcal{O}_K$ and $c \in \mathbb{Z}_{\geq1}$ so that $\alpha=\beta/c$ and $c$ is the minimum one with this property. 
Put $S=\mathcal{O}_{M_j} \setminus \mathfrak{q}_j$. 
Then $S^{-1}((nc+\beta) \mathcal{O}_L)$ and $S^{-1}(\mathfrak{q}_j^q \mathcal{O}_L)$ are integral ideals of the local ring $S^{-1}\mathcal{O}_L$ which satisfy
\begin{align*}
S^{-1}((nc+\beta) \mathcal{O}_L) 
\subset S^{-1}(\mathfrak{q}_j^q \mathcal{O}_L).
\end{align*}
Therefore, $nc+\beta \in S^{-1}\mathcal{O}_L$ is represented as 
\begin{align*}
nc+\beta
= \sum_{\lambda \in \Lambda} p_\lambda y_\lambda
\end{align*}
for some finite set $\Lambda$, where $p_\lambda \in S^{-1}\mathfrak{q}_j^q$ and $y_\lambda \in S^{-1}\mathcal{O}_L$. 
Furthermore, using a basis $\{x_1,\ldots,x_\rho\}$ of $S^{-1}\mathcal{O}_{L}$ over $S^{-1}\mathcal{O}_{M_j}$, we have 
\begin{align*}
y_\lambda
= \gamma_{1,\lambda} x_1 +\cdots+ \gamma_{\rho,\lambda} x_\rho
\end{align*}
for any $\lambda \in \Lambda$, where $\gamma_{\ell,\lambda} \in S^{-1}\mathcal{O}_{M_j}$ for $\ell \in \{1,\ldots,\rho\}$. 
From these, we obtain the formula
\begin{align*}
nc+\beta
= \left(\sum_{\lambda \in \Lambda} p_\lambda \gamma_{1,\lambda}\right) x_1
+ \cdots 
+ \left(\sum_{\lambda \in \Lambda} p_\lambda \gamma_{\rho,\lambda}\right) x_\rho.  
\end{align*}
On the other hand, we also have
\begin{align*}
nc+\beta
= (nca_1+b_1)x_1+\cdots+(nca_\rho+b_\rho)x_\rho
\end{align*}
by \eqref{eq:03221652}, where $a_\ell, b_\ell \in S^{-1}\mathcal{O}_{M_j}$. 
Comparing them, we find that $nca_\ell+b_\ell$ belongs to $S^{-1} \mathfrak{q}_j^q$ for any $\ell \in \{1,\ldots,\rho\}$. 
This implies that 
\begin{align*}
a_k b_\ell-a_\ell b_k
= a_k(nca_\ell+b_\ell) - a_\ell(nca_k+b_k)
\in S^{-1}\mathfrak{q}_j^q
\end{align*}
for any $k \neq \ell$. 
Hence we see that $S^{-1}\mathfrak{q}_j^q \mid \mathfrak{g}(\mathfrak{q}_j)$, where $\mathfrak{g}(\mathfrak{q}_j)$ is the integral ideal of $S^{-1}\mathcal{O}_{M_j}$ generated by all elements $a_k b_\ell-a_\ell b_k$ with $k \neq \ell$. 
Then we arrive at the inequality
\begin{align*}
\mathrm{N}(\mathfrak{q}_j^q)
= \mathrm{N}(S^{-1}\mathfrak{q}_j^q)
\leq G(\alpha)
\end{align*}
by the definition of $G(\alpha)$. 
As a result, we obtain 
\begin{align}\label{eq:07130030}
\mathrm{N}(\mathfrak{q}_1^q)^{\rho} \cdots \mathrm{N}(\mathfrak{q}_m^q)^{\rho}
\leq G(\alpha)^{\rho m}
\leq G(\alpha)^{[L:\mathbb{Q}]}
\end{align}
due to $\rho=ef$ and $[L:\mathbb{Q}]=efg$. 
Furthermore, $\mathrm{N}(\mathfrak{P}_{k_1})^r \cdots \mathrm{N}(\mathfrak{P}_{k_m})^r$ in \eqref{eq:07130028} is evaluated as follows. 
If $e \geq2$, then we recall that $p \mid \Delta_{L}$ is satisfied. 
In this case, $\mathrm{N}(\mathfrak{P}_{k_j})^r = p^{fr} \leq \Delta_{L}^{\rho}$ follows. 
If $e=1$, then we have $\mathrm{N}(\mathfrak{P}_{k_j})^r=1$ since $r$ must be equal to $0$. 
As a result, we obtain
\begin{align}\label{eq:07130029}
\mathrm{N}(\mathfrak{P}_{k_1})^r \cdots \mathrm{N}(\mathfrak{P}_{k_m})^r
\leq \Delta_{L}^{\rho m}
\leq \Delta_{L}^{[L:\mathbb{Q}]} 
\end{align}
due to $[L:\mathbb{Q}]=\rho g$. 
By \eqref{eq:07130028}, \eqref{eq:07130030}, and \eqref{eq:07130029}, we arrive at
\begin{align*}
\mathrm{N}(\mathfrak{p}^v)
\leq \left(G(\alpha) \Delta_{L}\right)^{[L:\mathbb{Q}]}
\leq d^{2d} 
=: C_1(d) 
\end{align*}
for any $\alpha \in \mathcal{A}_d$ by the definition of $\mathcal{A}_d$. 
Then the desired result immediately follows. 
Indeed, we have  
\begin{align*}
\mathrm{N}(\mathfrak{b}_n)
= \prod_{\mathfrak{p} \mid \mathfrak{b}_n} \mathrm{N}(\mathfrak{p}^{\nu_\mathfrak{p}(\mathfrak{b}_n)})
\leq C_1(d)^{\pi_K(C_1(d))}, 
\end{align*}
where $\pi_K(x)$ denotes the number of prime ideals $\mathfrak{p}$ of $K$ such that $\mathrm{N}(\mathfrak{p}) \leq x$. 
By the result of Lagarias--Montgomery--Odlyzko \cite[Theorem 1.4]{LagariasMontgomeryOdlyzko1979}, we have
\begin{align}\label{eq:03290039}
\pi_K(x)
\leq \frac{Ax}{\log{x}}
\quad\text{for}\quad
x 
> \exp\left(B (\log{\Delta_L}) (\log\log{\Delta_L}) (\log\log\log{\Delta_L e^{20}})\right), 
\end{align}
where $A$ and $B$ are positive absolute constants. 
Hence $C_1(d)^{\pi_K(C_1(d))}$ can be bounded by a positive constant $C(d)$ which depends only on $d$. 
\end{proof}

\begin{lemma}\label{lem:4.2'}
Let $\alpha \in \mathcal{A}_d$ with $d \geq5$. 
Suppose that $\mathfrak{p}_1, \mathfrak{p}_2 \in P_\alpha$ are distinct prime factors of $(n+\alpha)\mathfrak{a}$ for some $n \in \mathbb{Z}$ and have the same norm. 
Then we have $p:=\mathrm{N}(\mathfrak{p}_1)=\mathrm{N}(\mathfrak{p}_2) \leq d$. 
\end{lemma}

\begin{proof}
If $p$ is factorized in $L$ as in \eqref{eq:03281600} with $e \geq2$, then $p \leq \Delta_L$ follows. 
Then we below assume $e=1$. 
We factorize $\mathfrak{p}_1$ and $\mathfrak{p}_2$ in $L$ as 
\begin{align*}
\mathfrak{p}_1 \mathcal{O}_L
= \mathfrak{P}_{k_1} \cdots \mathfrak{P}_{k_m}
\quad\text{and}\quad
\mathfrak{p}_2 \mathcal{O}_L
= \mathfrak{P}_{\ell_1} \cdots \mathfrak{P}_{\ell_\mu} 
\end{align*}
similarly to \eqref{eq:07122257}. 
Here, we may assume $k_m \geq \ell_\mu$ without loss of generality. 
Since $\mathfrak{p}_1 \mathcal{O}_L$ and $\mathfrak{p}_2 \mathcal{O}_L$ are distinct ideals, there exists $k \in \{k_1,\ldots,k_m\}$ such that $\mathfrak{P}_k \nmid \mathfrak{p}_2 \mathcal{O}_L$. 
Take $\ell \in \{\ell_1,\ldots,\ell_\mu\}$ arbitrarily. 
If we put $\mathfrak{P}_k=\mathfrak{P}_{\ell}^\sigma$ for some $\sigma \in \mathrm{Gal}(L/\mathbb{Q})$, then we must have $\sigma \notin \mathrm{Gal}(L/K)$ due to $\mathfrak{P}_k \nmid \mathfrak{p}_2 \mathcal{O}_L$ and $\mathfrak{P}_\ell \mid \mathfrak{p}_2 \mathcal{O}_L$. 
Recall that both $\mathfrak{P}_k$ and $\mathfrak{P}_\ell$ divide $(nc+\beta) \mathcal{O}_L$, where $\beta \in \mathcal{O}_K$ and $c \in \mathbb{Z}_{\geq1}$ are as in the proof of Lemma \ref{lem:4.2}. 
Thus we obtain 
\begin{align*}
\mathfrak{P}_k \mid (nc+\beta) \mathcal{O}_L
\quad\text{and}\quad
\mathfrak{P}_k \mid (nc+\beta^\sigma) \mathcal{O}_L. 
\end{align*}
These yield that $\mathfrak{P}_k \mid (\beta-\beta^\sigma) \mathcal{O}_L$, and furthermore, 
\begin{align*}
\mathrm{N}(\mathfrak{P}_k)
\leq \left|\mathrm{N}_{L/\mathbb{Q}} (\beta-\beta^\sigma)\right|
\leq H(\alpha)
\end{align*}
by the definition of $H(\alpha)$. 
Since $\mathfrak{P}_k$ is a prime ideal of $L$ lying above $p$, we have $\mathrm{N}(\mathfrak{P}_k)=p^f \geq p$ for some $f \geq 1$. 
As a result, we have the upper bound
\begin{align*}
p 
\leq \max\{\Delta_L, H(\alpha) \}
\leq d
\end{align*}
for any $\alpha \in \mathcal{A}_d$ by the definition of $\mathcal{A}_d$. 
\end{proof}

\begin{lemma}\label{lem:4.3}
Let $\mathfrak{p} \in P_\alpha$ and $v \geq1$. 
If $\mathfrak{p}^v \mid (m+\alpha)\mathfrak{a}$ and $\mathfrak{p}^v \mid (n+\alpha)\mathfrak{a}$ for $m,n \in \mathbb{Z}$, then we have $m \equiv n ~(\bmod\,{p^v})$. 
\end{lemma}

\begin{proof}
Put $m-n=p^w A$ with some $w \in \mathbb{Z}_{\geq0}$ and $A \in \mathbb{Z}_{\geq1}$ such that $(p,A)=1$. 
By the assumption, we have $(m+\alpha)\mathfrak{a} \subset \mathfrak{p}^v$ and $(n+\alpha)\mathfrak{a} \subset \mathfrak{p}^v$. 
Thus we see that $(m-n)\mathfrak{a} \subset \mathfrak{p}^v$ holds, which yields
\begin{align*}
\mathfrak{p}^v \mid (m-n) 
\end{align*}
due to $\mathfrak{p} \nmid \mathfrak{a}$. 
Therefore, $\mathfrak{p}^v \mid (p)^w (A)$ follows. 
We deduce that $\mathfrak{p} \nmid (A)$ from $(p,A)=1$ by taking the norms. 
As a result, we obtain 
\begin{align*}
\mathfrak{p}^v \mid (p)^w. 
\end{align*}
Recall that $\mathfrak{p}$ is unambiguous, i.e.\ $\mathfrak{p}^2 \nmid (p)$. 
Hence we see that $w \geq v$, which implies $p^v \mid (m-n)$ as desired. 
\end{proof}

\begin{lemma}\label{lem:4.4}
Let $1/2<\sigma<1$ be a fixed real number. 
For any $0<\alpha \leq1$, we have
\begin{align*}
\sum_{\substack{ n \in \mathcal{L}(N) \\ n \equiv a \hspace{-2truemm} \pmod{q}}} \frac{1}{(n+\alpha)^\sigma}
&= \frac{1}{q} \sum_{n \in \mathcal{L}(N)} \frac{1}{(n+\alpha)^\sigma}
+ O(N^{-\sigma}) 
\end{align*}
with arbitrary integers $q \geq1$ and $0 \leq a<q$, where the implied constant is absolute. 
\end{lemma}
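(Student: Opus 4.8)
The plan is to compare both sums in the statement with the single integral
\[
I = \int_{N}^{N\log N} \frac{dx}{(x+\alpha)^{\sigma}},
\]
using only that $x \mapsto (x+\alpha)^{-\sigma}$ is positive and strictly decreasing on $[N,\infty)$. Every error term that will arise is dominated by the value of this function at some point $\geq N$, hence by $(N+\alpha)^{-\sigma} \leq N^{-\sigma}$, which is why all implied constants can be taken absolute.

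First I would handle the full sum: the standard comparison of a monotone positive sum with its integral gives
\[
\sum_{n \in \mathcal{L}(N)} \frac{1}{(n+\alpha)^{\sigma}} = I + O(N^{-\sigma}),
\]
the discrepancy coming only from the boundary near $n \approx N$ (the part near $n \approx N\log N$ contributes $O((N\log N)^{-\sigma}) = O(N^{-\sigma})$). For the progression I substitute $n = a+qk$: the condition $n \in \mathcal{L}(N)$ becomes $k_{-} < k \leq k_{+}$ with $k_{-} = (N-a)/q$ and $k_{+} = (N\log N - a)/q$, so the left-hand side of the lemma is $\sum_{k_{-} < k \leq k_{+}} \phi(k)$, where $\phi(k) = (a+qk+\alpha)^{-\sigma}$ is again positive and decreasing. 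The linear substitution $x = a+qt$ carries $[k_{-},k_{+}]$ onto $[N,N\log N]$ and shows that $\int_{k_{-}}^{k_{+}} \phi(t)\,dt = \frac1q I$. Comparing the sum of $\phi$ over the integers in $(k_{-},k_{+}]$ with this integral — bounding the leftmost term pointwise by $\phi(k_{-}) = (N+\alpha)^{-\sigma} \leq N^{-\sigma}$ and absorbing the two endpoint discrepancies of the integral comparison, each of size $O(N^{-\sigma})$ — yields
\[
\sum_{\substack{n \in \mathcal{L}(N) \\ n \equiv a \pmod{q}}} \frac{1}{(n+\alpha)^{\sigma}} = \frac1q I + O(N^{-\sigma}).
\]
Subtracting $\frac1q$ times the full-sum estimate and noting $\frac1q \cdot O(N^{-\sigma}) = O(N^{-\sigma})$ then gives the lemma.

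The only delicate point — and the reason not to simply apply Euler--Maclaurin to $\phi$ on an interval such as $[k_{-}-1,\,k_{+}]$ — is that $q$ is completely unrestricted relative to $N$, so $\phi$ need not even be defined at $k_{-}-1$ (there $a + q(k_{-}-1) + \alpha = N - q + \alpha$, which may be $\leq 0$). Hence the integral comparison must be confined to $[k_{-},k_{+}]$ and the left boundary treated by hand via $\phi(k_{-}) = (N+\alpha)^{-\sigma}$. In the same spirit, if $q$ is so large that $(k_{-},k_{+}]$ contains no integer, the progression sum is $0$; but then $q > N\log N - N$, so $\frac1q I \leq \frac{1}{q}(N\log N - N)N^{-\sigma} < N^{-\sigma}$ and the identity still holds. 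These large-$q$ configurations are the main obstacle; once they are dispatched the remaining estimates are routine (and in fact the range $1/2<\sigma<1$ plays no essential role here).
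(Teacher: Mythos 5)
Your proof is correct, and it takes a genuinely different route from the paper. The paper applies Abel (partial) summation: with $c_n=\mathbf 1_{n\equiv a\ (q)}$ and $f(x)=(x+\alpha)^{-\sigma}$, it uses the Riemann--Stieltjes formula for $\sum c_n f(n)$ together with $\mathcal{C}(x)=\sum_{N<n\le x}c_n=\tfrac1q(x-N)+O(1)$, then specializes to $q=1$ and subtracts. You instead compare both the full sum and the progression sum with the single integral $I=\int_N^{N\log N}(x+\alpha)^{-\sigma}\,dx$, linearizing the progression via $n=a+qk$ so that the substitution $x=a+qt$ exhibits the factor $1/q$. The content is the same elementary comparison-with-integral idea, but the bookkeeping is organized very differently: the paper's partial summation cleanly absorbs all boundary behavior into the $O(1)$ fluctuation of $\mathcal{C}$ and the evaluated boundary term, whereas your route forces you to manage the endpoints of $(k_-,k_+]$ by hand. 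You identify the right subtlety --- that $\phi(t)=(a+qt+\alpha)^{-\sigma}$ need not be defined for $t<k_-$, so the integral comparison must be restricted to $[k_-,k_+]$, with the leftmost cell bounded directly by $\phi(k_-)=(N+\alpha)^{-\sigma}$ --- and you also correctly dispatch the degenerate case in which $(k_-,k_+]$ contains no integer. Your final observation that $\sigma<1$ is never used is also accurate (only $\sigma>0$ enters). In short, the proposal is a legitimate alternative proof of comparable length; the paper's version is marginally slicker because partial summation sidesteps the endpoint casework that your method must do explicitly.
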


\begin{proof}
Let $f(x)=(x+\alpha)^{-\sigma}$ and $c_n=1$ if $n \equiv a \pmod{q}$ and $c_n=0$ otherwise. 
Then we have 
\begin{gather*}
\mathcal{C}(x)
:= \sum_{N<n \leq x} c_n
= \frac{1}{q}(x-N) + O(1). 
\end{gather*}
By the partial summation formula \cite[Theorem 1 on p.\,326]{KaratsubaVoronin1992}, we obtain
\begin{align*}
&\sum_{\substack{ n \in \mathcal{L}(N) \\ n \equiv a \hspace{-2truemm} \pmod{q}}} \frac{1}{(n+\alpha)^\sigma}
= -\int_{N}^{N \log{N}} \mathcal{C}(x)f'(x) \,dx
+ \mathcal{C}(N \log{N}) f(N \log{N}) \\
&= \frac{1}{q} \left\{ \sigma \int_{N}^{N \log{N}} \frac{x-N}{(x+\alpha)^{\sigma+1}} \,dx 
+ \frac{N \log{N}-N}{(N \log{N}+\alpha)^\sigma} \right\} 
+ O\left(\frac{1}{(N+\alpha)^\sigma}\right),  \nonumber 
\end{align*}
where the implied constant is absolute. 
Letting $q=1$, we also obtain
\begin{align*}
&\sum_{n \in \mathcal{L}(N)} \frac{1}{(n+\alpha)^\sigma} \\
&= \sigma \int_{N}^{N \log{N}} \frac{x-N}{(x+\alpha)^{\sigma+1}} \,dx 
+ \frac{N \log{N}-N}{(N \log{N}+\alpha)^\sigma} 
+ O\left(\frac{1}{(N+\alpha)^\sigma}\right). 
\end{align*}
Comparing these formulas, we obtain the first formula. 
\end{proof}

\subsection{Cassels method}\label{sec:4.2}
Let $1/2<\sigma<1$ be a fixed real number. 
Define 
\begin{gather*}
\mathfrak{S}_\alpha(N)
= \left\{n \in \mathcal{L}(N) ~\middle|~ 
\text{$p^{u_n(\mathfrak{p})} \leq N \log{N}$ for all $\mathfrak{p} \in P_\alpha$} \right\}. 
\end{gather*}
Then we put 
\begin{gather}\label{eq:10080022}
\sum_{n \in \mathfrak{S}_\alpha(N)} \frac{1}{(n+\alpha)^\sigma}
= \rho \sum_{n \in \mathcal{L}(N)} \frac{1}{(n+\alpha)^\sigma}, 
\end{gather}
where $\rho=\rho(\sigma,\alpha,N)$ is a real number such that $0<\rho \leq1$. 
In this subsection, we prove that  $\rho \leq 0.48+o(1)$ holds as $N \to\infty$ uniformly for $\alpha \in \mathcal{A}_{d}$ according to the method of Cassels \cite{Cassels1961}. 
For $n \in \mathcal{L}(N)$, we define 
\begin{gather}\label{eq:10080039}
\sigma(n)
= \mathop{ \sum_{\mathfrak{p} \in P_\alpha} \sum_{v=1}^{\infty} } \limits_{p^v \leq N \log{N}} \phi(\mathfrak{p}^v, n), 
\end{gather}
where $\phi(\mathfrak{p}^v, n)=\log{p}$ if $\mathfrak{p}^v \mid (n+\alpha) \mathfrak{a}$, and $\phi(\mathfrak{p}^v, n)=0$ otherwise. 
Then we have the following result. 

\begin{proposition}\label{prop:4.5}
Let $1/2<\sigma<1$ be a fixed real number. 
Then we have 
\begin{gather}\label{eq:10092331}
\sum_{n \in \mathfrak{S}_\alpha(N)} \frac{\sigma(n)}{(n+\alpha)^\sigma}
\geq (2\rho+o(1)) \log(N \log{N}) \sum_{n \in \mathcal{L}(N)} \frac{1}{(n+\alpha)^\sigma}
\end{gather}
as $N \to\infty$ uniformly for $\alpha \in \mathcal{A}_{d}$, where $\rho=\rho(\sigma,\alpha,N)$ satisfies \eqref{eq:10080022}. 
\end{proposition}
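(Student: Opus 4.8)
The plan is to deduce \eqref{eq:10092331} from a pointwise lower bound $\sigma(n)\ge(2+o(1))\log(N\log N)$ for every $n\in\mathfrak{S}_\alpha(N)$, with the $o(1)$ uniform in $\alpha\in\mathbb{A}_{c,d}$, and then to rewrite the weighted sum via the normalisation \eqref{eq:10080022}.

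First I would simplify $\sigma(n)$ on $\mathfrak{S}_\alpha(N)$. By the factorisation \eqref{eq:10071630}, a prime ideal $\mathfrak{p}\in S_\alpha$ over the rational prime $p$ divides $(n+\alpha)\mathfrak{a}$ with multiplicity $u_n(\mathfrak{p})$, so $\mathfrak{p}^v\mid(n+\alpha)\mathfrak{a}$ holds exactly for $1\le v\le u_n(\mathfrak{p})$; for $n\in\mathfrak{S}_\alpha(N)$ each such prime power obeys $p^v\le p^{u_n(\mathfrak{p})}\le N\log N$, so the truncation in \eqref{eq:10080039} removes nothing and
\[
\sigma(n)=\sum_{\mathfrak{p}\in S_\alpha}u_n(\mathfrak{p})\log p,
\]
a finite sum, only prime ideals of norm at most $\mathrm{N}((n+\alpha)\mathfrak{a})$ occurring. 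Taking norms in \eqref{eq:10071704}, where $\mathrm{N}(\mathfrak{p})=p$ for $\mathfrak{p}\in S_\alpha$, turns this into
\[
\sigma(n)=\log\mathrm{N}\big((n+\alpha)\mathfrak{a}\big)-\log\mathrm{N}\big((x_ny_n)\mathfrak{b}_n\big)\ge\log\mathrm{N}\big((n+\alpha)\mathfrak{a}\big)-\log(16d),
\]
the inequality being Lemma \ref{lem:4.2}.

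Next I would bound $\mathrm{N}((n+\alpha)\mathfrak{a})=\mathrm{N}((n+\alpha))\,\mathrm{N}(\mathfrak{a})$ from below. Since $\alpha$ and its conjugate are the two numbers $(b\pm\sqrt d)/a$, which lie in $(0,1)$ by the definition of $\mathbb{A}_{c,d}$, the element $n+\alpha$ is totally positive with both conjugates in $(n,n+1)$, so $\mathrm{N}((n+\alpha))$ lies strictly between $n^2$ and $(n+1)^2$; combined with $\mathrm{N}(\mathfrak{a})\ge1$ this gives $\sigma(n)>2\log n-\log(16d)$. On $\mathfrak{S}_\alpha(N)\subseteq\mathcal{L}(N)$ we have $n>N$, whence, using $\log(N\log N)=\log N+\log\log N$,
\[
\sigma(n)\ge\Big(2-\frac{2\log\log N+\log(16d)}{\log(N\log N)}\Big)\log(N\log N)=\big(2+o(1)\big)\log(N\log N),
\]
the error tending to $0$ as $N\to\infty$ uniformly for $\alpha\in\mathbb{A}_{c,d}$ because $d$ is fixed. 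Multiplying by $(n+\alpha)^{-\sigma}$, summing over $n\in\mathfrak{S}_\alpha(N)$, and applying \eqref{eq:10080022},
\[
\sum_{n\in\mathfrak{S}_\alpha(N)}\frac{\sigma(n)}{(n+\alpha)^\sigma}\ge\big(2+o(1)\big)\log(N\log N)\cdot\rho\sum_{n\in\mathcal{L}(N)}\frac{1}{(n+\alpha)^\sigma},
\]
and since $0<\rho\le1$ the coefficient $(2+o(1))\rho$ equals $2\rho+o(1)$, which is \eqref{eq:10092331}.

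The point needing the most care is that the $o(1)$ above is uniform over $\alpha\in\mathbb{A}_{c,d}$, but this presents no real difficulty: the only quantities entering the estimates are the constant $16d$ of Lemma \ref{lem:4.2}, the trivial bound $\mathrm{N}(\mathfrak{a})\ge1$, and the endpoints $N,N\log N$ of the summation range, all depending on $c,d$ alone. (Lemmas \ref{lem:4.3} and \ref{lem:4.4} are not needed for this lower bound; they enter the matching upper bound for $\sum_{n\in\mathcal{L}(N)}\sigma(n)/(n+\alpha)^\sigma$ used afterwards to conclude $\rho\le0.48+o(1)$.)
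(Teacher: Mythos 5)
Your proof is correct and follows essentially the same route as the paper's: take norms in \eqref{eq:10071704}, lower-bound $\mathrm{N}((n+\alpha)\mathfrak{a})$ by $\gg n^2$ using that both conjugates of $\alpha$ lie in $(c,1)$, upper-bound $\mathrm{N}((x_ny_n)\mathfrak{b}_n)$ by Lemma~\ref{lem:4.2}, observe that for $n\in\mathfrak{S}_\alpha(N)$ the truncation in \eqref{eq:10080039} is vacuous so $\sigma(n)=\sum_{\mathfrak{p}\in S_\alpha}u_n(\mathfrak{p})\log p\ge 2\log n+O_d(1)$, and then sum, normalize by \eqref{eq:10080022}, and absorb the error into $o(1)$ using $0<\rho\le1$ and $\log N=(1+o(1))\log(N\log N)$. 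Your write-up is merely more explicit than the paper's about the arithmetic of the logarithms; there is no substantive difference.
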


\begin{proof}
For $\alpha \in \mathcal{A}_{d}$ and $n \in \mathbb{Z}$ with $n \geq 2d$, we have 
\begin{gather*}
\mathrm{N}((n+\alpha) \mathfrak{a})
= \mathrm{N}_{K/\mathbb{Q}} (n+\alpha) \, \mathrm{N}(\mathfrak{a})
\geq \left(n- \overline{|\alpha|} \right)^{\deg(\alpha)}
\geq \frac{n^2}{4}
\end{gather*}
since $\overline{|\alpha|} \leq d$ is satisfied for any $\alpha \in \mathcal{A}_{d}$ by definition. 
Applying this lower bound and Lemma \ref{lem:4.2}, we deduce from \eqref{eq:10071704} the inequality
\begin{gather*}
\sum_{\mathfrak{p} \in P_\alpha} u_n(\mathfrak{p}) \log{p}
\geq 2\log{n}+O(1), 
\end{gather*}
where the implied constant depends only on $d$. 
For $n \in \mathfrak{S}_\alpha(N)$ with $N \geq 2d$, it further yields that $\sigma(n) \geq 2\log{n}+O(1)$ by the definition of $\phi(\mathfrak{p}^v, n)$. 
Therefore, 
\begin{align*}
\sum_{n \in \mathfrak{S}_\alpha(N)} \frac{\sigma(n)}{(n+\alpha)^\sigma}
&\geq \sum_{n \in \mathfrak{S}_\alpha(N)} \frac{2 \log{n}+O(1)}{(n+\alpha)^\sigma} \\
&\geq (2\rho+o(1)) \log{N} \sum_{n \in \mathcal{L}(N)} \frac{1}{(n+\alpha)^\sigma}. 
\end{align*}
Using that $\log{N}= \log(N \log{N}) (1+o(1))$ as $N \to\infty$, we obtain the conclusion. 
\end{proof}

Furthermore, we divide $\sigma(n)$ as in \eqref{eq:10080039} into the following three parts: 
\begin{gather*}
\sigma_1(n)
= \mathop{ \sum_{ \mathfrak{p} \in P_\alpha} \sum_{v=1}^{\infty} } \limits_{p^v \leq N \log{N}} 
\phi(\mathfrak{p}^v, n), 
\qquad
\sigma_2(n) 
= \sum_{\substack{ \mathfrak{p} \in P_\alpha \\ \sqrt{N \log{N}} < p \leq N \log{N}}} 
\phi(\mathfrak{p}, n), \\
\sigma_3(n)
= \sum_{\substack{ \mathfrak{p} \in P_\alpha \\ p \leq \sqrt{N \log{N}}}} 
\phi(\mathfrak{p}, n). 
\end{gather*}

\begin{proposition}\label{prop:4.6}
Let $1/2<\sigma<1$ be a fixed real number. 
Then we have 
\begin{align}
\sum_{n \in \mathfrak{S}_\alpha(N)} \frac{\sigma_1(n)}{(n+\alpha)^\sigma}
&= o(\log(N \log{N})) \sum_{n \in \mathcal{L}(N)} \frac{1}{(n+\alpha)^\sigma}, 
\label{eq:10080100} \\
\sum_{n \in \mathfrak{S}_\alpha(N)} \frac{\sigma_2(n)}{(n+\alpha)^\sigma}
&\leq \left(\frac{1}{2}+o(1)\right) \log(N \log{N}) \sum_{n \in \mathcal{L}(N)} \frac{1}{(n+\alpha)^\sigma}, 
\label{eq:10080101} \\
\sum_{n \in \mathfrak{S}_\alpha(N)} \frac{\sigma_3(n)^2}{(n+\alpha)^\sigma}
&\leq \left(\frac{3}{8}+o(1)\right) \log^2(N \log{N})  
\sum_{n \in \mathcal{L}(N)} \frac{1}{(n+\alpha)^\sigma}
\label{eq:10080102}
\end{align}
as $N \to\infty$ uniformly for $\alpha \in \mathcal{A}_{d}$. 
\end{proposition}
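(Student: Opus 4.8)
The plan is to prove all three estimates by one common reduction, after which each becomes a short computation with Mertens-type prime sums in the field $K=\mathbb{Q}(\sqrt{d_2})$, where $d=(d_1)^2d_2$ with $d_2$ squarefree.

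First I would discard the restriction to $\mathfrak{S}_\alpha(N)$: since all summands are non-negative, it suffices to bound the corresponding sums over all of $\mathcal{L}(N)$. Then I expand the weight --- $\sigma_1(n)$ and $\sigma_2(n)$ as sums over $\mathfrak{p}\in S_\alpha$, and $\sigma_3(n)^2$ as a sum over ordered pairs $(\mathfrak{p},\mathfrak{q})\in S_\alpha\times S_\alpha$ --- interchange the order of summation, and for a fixed prime ideal (resp.\ a fixed pair) estimate the inner sum of $(n+\alpha)^{-\sigma}$ over the $n\in\mathcal{L}(N)$ divisible by the relevant ideal. By Lemma~\ref{lem:4.3} the integers $n$ with $\mathfrak{p}^v\mid(n+\alpha)\mathfrak{a}$ lie in one residue class modulo $p^v$ (if $\mathfrak{p}\mid\mathfrak{a}$ the term is simply absent, since then $\mathfrak{p}\nmid(n+\alpha)\mathfrak{a}$ for every $n$), and for a pair $\mathfrak{p}\neq\mathfrak{q}$ over distinct rational primes $p\neq q$ the $n$ with $\mathfrak{p}\mathfrak{q}\mid(n+\alpha)\mathfrak{a}$ lie in one class modulo $pq$. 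Feeding this into Lemma~\ref{lem:4.4} replaces each inner sum by $p^{-v}\sum_{n\in\mathcal{L}(N)}(n+\alpha)^{-\sigma}$ (resp.\ $(pq)^{-1}\sum_{n\in\mathcal{L}(N)}(n+\alpha)^{-\sigma}$) up to an $O(N^{-\sigma})$ error. Thus each of the three quantities becomes an explicit sum over the rational primes underlying $S_\alpha$, multiplied by $\sum_{n\in\mathcal{L}(N)}(n+\alpha)^{-\sigma}$, plus an accumulated error.

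The arithmetic input is that the rational primes underlying $S_\alpha$ are exactly those splitting in $K$, of Dirichlet density $1/2$; concretely, by the prime number theorem for the quadratic character $\chi_K$ (equivalently $L(1,\chi_K)\neq0$) one has $\sum_{p\le x,\,p\text{ split}}\frac{\log p}{p}=\frac12\log x+O_d(1)$ and $\sum_{p\le x,\,p\text{ split}}\frac{(\log p)^2}{p}=\frac14(\log x)^2+O_d(\log x)$, and since each split $p$ carries two ideals of $S_\alpha$, summing over $\mathfrak{p}\in S_\alpha$ doubles these. The bookkeeping then runs: for \eqref{eq:10080100} the relevant sum is $\le 2\sum_p\sum_{v\ge2}\frac{\log p}{p^v}=O_d(1)=o(\log(N\log N))$; for \eqref{eq:10080101} it is $\sum_{\mathfrak{p}\in S_\alpha,\,\sqrt{N\log N}<p\le N\log N}\frac{\log p}{p}=\frac12\log(N\log N)+o(\log(N\log N))$; and for \eqref{eq:10080102} I split the expansion of $\sigma_3(n)^2$ into the diagonal $\mathfrak{p}=\mathfrak{q}$, the off-diagonal with distinct underlying primes, and the off-diagonal where $\mathfrak{p},\mathfrak{q}$ are the two conjugate primes above a single split $p$. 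The diagonal contributes $2\sum_{p\le\sqrt{N\log N},\,p\text{ split}}\frac{(\log p)^2}{p}=\frac18\log^2(N\log N)+o(\log^2(N\log N))$, the distinct-prime off-diagonal is bounded by $\bigl(2\sum_{p\le\sqrt{N\log N},\,p\text{ split}}\frac{\log p}{p}\bigr)^2=\frac14\log^2(N\log N)+o(\log^2(N\log N))$, and $\frac18+\frac14=\frac38$ is the claimed constant. The degenerate piece, with $\mathfrak{p}\mathfrak{q}=(p)$, is nonzero only when $(p)\mid(n+\alpha)\mathfrak{a}$ for some $n$; comparing the two residue classes modulo $p$ that $n$ is then forced to satisfy --- one from each prime above $p$ --- and using the explicit shape $\alpha=(b\pm\sqrt d)/a$, one sees they coincide only if $p\mid 2d$, so this piece is $O_d(1)\sum_{n\in\mathcal{L}(N)}(n+\alpha)^{-\sigma}=o(\log^2(N\log N))\sum_{n\in\mathcal{L}(N)}(n+\alpha)^{-\sigma}$ and is absorbed.

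It then remains to control the accumulated $O(N^{-\sigma})$ errors. Uniformly for $0<\alpha<1$ one has $\sum_{n\in\mathcal{L}(N)}(n+\alpha)^{-\sigma}\asymp N^{1-\sigma}(\log N)^{1-\sigma}$, and summing the errors --- over the $O(N)$ relevant ideals in the $\sigma_1$- and $\sigma_2$-sums, over the $O(N/\log N)$ pairs of small ideals in the $\sigma_3(n)^2$-sum, each carried with a factor $(\log N)^{O(1)}$ --- gives a total $\ll N^{1-\sigma}(\log N)^{O(1)}$, which is $o$ of the main terms $\gg N^{1-\sigma}(\log N)^{2-\sigma}$ and $\gg N^{1-\sigma}(\log N)^{3-\sigma}$ precisely because $\sigma<1$. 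Uniformity in $\alpha\in\mathbb{A}_{c,d}$ comes for free, since $K$ --- and hence the split-prime sums above --- depends only on $d$, while Lemmas~\ref{lem:4.2}, \ref{lem:4.3}, \ref{lem:4.4} are uniform over $\mathbb{A}_{c,d}$. I expect the main obstacle to be organizing the second-moment computation \eqref{eq:10080102} cleanly enough both to separate the diagonal constant $\frac18$ from the off-diagonal $\frac14$ and to dispose of the degenerate conjugate-ideal terms; this is exactly the point where the restriction to $\alpha=(b\pm\sqrt d)/a$ is used, through Lemma~\ref{lem:4.2} and the reduction modulo $p$ just described.
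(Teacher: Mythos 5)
Your proposal follows essentially the same route as the paper: bound each sum over $\mathfrak{S}_\alpha(N)$ by the corresponding sum over $\mathcal{L}(N)$, interchange summation, use Lemma~\ref{lem:4.3} (together with the CRT for the off-diagonal) to confine the relevant $n$ to a single residue class, apply Lemma~\ref{lem:4.4} to replace each inner sum by the density factor times $\sum_{n\in\mathcal{L}(N)}(n+\alpha)^{-\sigma}$, and then finish with the prime ideal theorem and the split $\tfrac18+\tfrac14=\tfrac38$ into diagonal, distinct-prime off-diagonal, and conjugate-pair pieces (disposed of via the $p\mid 2d_1$ argument from Lemma~\ref{lem:4.2}). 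The bookkeeping and the uniformity discussion also match the paper's, so this is the same proof expressed in terms of split rational primes rather than prime ideals.
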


\begin{proof}
By the definitions $\sigma_1(n)$ and $\sigma_2(n)$, we have 
\begin{gather*}
\sum_{n \in \mathfrak{S}_\alpha(N)} \frac{\sigma_1(n)}{(n+\alpha)^\sigma}
\leq \sum_{n \in \mathcal{L}(N)} \frac{\sigma_1(n)}{(n+\alpha)^\sigma} 
= \mathop{ \sum_{ \mathfrak{p} \in P_\alpha} \sum_{v=2}^{\infty} } \limits_{p^v \leq N \log{N}} 
\sum_{n \in \mathcal{L}(N)} \frac{\phi(\mathfrak{p}^v, n)}{(n+\alpha)^\sigma},  
\end{gather*}
and similarly, 
\begin{align*}
\sum_{n \in \mathfrak{S}_\alpha(N)} \frac{\sigma_2(n)}{(n+\alpha)^\sigma}
& \leq \sum_{\substack{ \mathfrak{p} \in P_\alpha \\ \sqrt{N \log{N}} < p \leq N \log{N} }} 
\sum_{n \in \mathcal{L}(N)} \frac{\phi(\mathfrak{p}, n)}{(n+\alpha)^\sigma}. 
\end{align*}
Then we evaluate the inner sums in a way similar to \cite[$(29)$]{Cassels1961} by applying Lemmas \ref{lem:4.3} and \ref{lem:4.4}. 
As a result, we obtain
\begin{align*}
\sum_{n \in \mathcal{L}(N)} \frac{\phi(\mathfrak{p}^v, n)}{(n+\alpha)^\sigma}
&\leq (\log{p})\left\{\frac{1}{p^v} \sum_{n \in \mathcal{L}(N)} \frac{1}{(n+\alpha)^\sigma} 
+ O(N^{-\sigma})\right\}. 
\end{align*}
By using this, \eqref{eq:10080100} and \eqref{eq:10080101} follow along the same lines as \cite[$(32), (33)$]{Cassels1961}. 
On the other hand, the proof of \eqref{eq:10080102} requires more careful treatment. 
By the definition of $\sigma_3(n)$, we have 
\begin{align}\label{eq:03290022}
\sum_{n \in \mathfrak{S}_\alpha(N)} \frac{\sigma_3(n)^2}{(n+\alpha)^\sigma}
& \leq \sum_{\substack{\mathfrak{p} \in P_\alpha \\ p \leq \sqrt{N \log{N}} }}
\sum_{n \in \mathcal{L}(N)} \frac{\phi(\mathfrak{p}, n)^2}{(n+\alpha)^\sigma} \\
&+ \sum_{\substack{\mathfrak{p}_1 \neq \mathfrak{p}_2 \in P_\alpha \\ p_1 \neq p_2 \leq \sqrt{N \log{N}} }}
\sum_{n \in \mathcal{L}(N)} \frac{\phi(\mathfrak{p}_1, n)\phi(\mathfrak{p}_2, n)}{(n+\alpha)^\sigma} \nonumber \\
&+ \sum_{\substack{\mathfrak{p}_1 \neq \mathfrak{p}_2 \in P_\alpha \\ p_1=p_2 \leq \sqrt{N \log{N}} }}
\sum_{n \in \mathcal{L}(N)} \frac{\phi(\mathfrak{p}_1, n)\phi(\mathfrak{p}_2, n)}{(n+\alpha)^\sigma}, \nonumber
\end{align}
where $p_1=\mathrm{N}(\mathfrak{p}_1)$ and $p_2=\mathrm{N}(\mathfrak{p}_2)$. 
The first sum of the right-hand side can be estimated by an argument similar to the proof of \eqref{eq:10080100} and \eqref{eq:10080101}. 
Furthermore, along the same line as \cite[$(30)$]{Cassels1961}, we have 
\begin{align*}
&\sum_{n \in \mathcal{L}(N)} \frac{\phi(\mathfrak{p}_1, n)\phi(\mathfrak{p}_2, n)}{(n+\alpha)^\sigma} \\
&\leq (\log{p}_1)(\log{p}_2) 
\left\{\frac{1}{p_1 p_2} \sum_{n \in \mathcal{L}(N)} \frac{1}{(n+\alpha)^\sigma} + O(N^{-\sigma})\right\}
\end{align*}
provided that $p_1 \neq p_2$, using the Chinese remainder theorem. 
Hence, the first two sums on the right-hand side of \eqref{eq:03290022} is less than or equal to 
\begin{align*}
\left(\frac{3}{8}+o(1)\right) \log^2(N \log{N})  
\sum_{n \in \mathcal{L}(N)} \frac{1}{(n+\alpha)^\sigma}
\end{align*}
similarly to \cite[$(34)$]{Cassels1961}. 
Finally, we apply Lemma \ref{lem:4.2'} to see that the third sum on the right-hand side is less than or equal to 
\begin{align*}
\sum_{\substack{\mathfrak{p}_1 \neq \mathfrak{p}_2 \in P_\alpha \\ p_1=p_2 \leq d }}
\sum_{n \in \mathcal{L}(N)} \frac{(\log{p}_1)(\log{p}_2)}{(n+\alpha)^\sigma}
\leq \pi_K(d)^2 (\log{d})^2
\sum_{n \in \mathcal{L}(N)} \frac{1}{(n+\alpha)^\sigma}. 
\end{align*}
From these results together with \eqref{eq:03290039}, we derive that \eqref{eq:10080102} follows. 
\end{proof}

Now we are ready to show $\rho \leq 0.48+o(1)$ as $N \to\infty$ uniformly for $\alpha \in \mathcal{A}_{d}$. 
By \eqref{eq:10092331}, \eqref{eq:10080100}, \eqref{eq:10080101}, we have the lower bound 
\begin{gather*}
\sum_{n \in \mathfrak{S}_\alpha(N)} \frac{\sigma_3(n)}{(n+\alpha)^\sigma}
\geq \left(2\rho - \frac{1}{2} +o(1)\right) \log(N \log{N}) 
\sum_{n \in \mathcal{L}(N)} \frac{1}{(n+\alpha)^\sigma} 
\end{gather*}
which corresponds to \cite[$(35)$]{Cassels1961}. 
On the other hand, we deduce from \eqref{eq:10080102} the upper bound
\begin{align*}
\sum_{n \in \mathfrak{S}_\alpha(N)} \frac{\sigma_3(n)}{(n+\alpha)^\sigma}
\leq \left(\sqrt{\frac{3\rho}{8}} +o(1)\right) \log(N \log{N}) 
\sum_{n \in \mathcal{L}(N)} \frac{1}{(n+\alpha)^\sigma} 
\end{align*}
by using the Cauchy--Schwarz inequality. 
Comparing the lower and upper bounds, we obtain the inequality
\begin{gather*}
2\rho-\frac{1}{2} 
\leq \sqrt{\frac{3 \rho}{8}}
+ o(1), 
\end{gather*}
which yields $\rho \leq 0.48+o(1)$ by a similar argument used to deduce \cite[$(37)$]{Cassels1961}. 
Then, we complete the proof of Proposition \ref{prop:4.1}. 

\begin{proof}[Proof of Proposition \ref{prop:4.1}]
With the notation above, we define 
\begin{align*}
\mathfrak{T}_\alpha(N) 
= \mathcal{L}(N) \setminus \mathfrak{S}_\alpha(N). 
\end{align*}
Then, for any $n \in \mathfrak{T}_\alpha(N)$, there exists a prime ideal $\mathfrak{p} \in P_\alpha$ such that
\begin{gather}\label{eq:10100000}
\mathfrak{p}^v 
\mid (n+\alpha) \mathfrak{a}
\quad\text{and}\quad
p^v
> N \log{N}
\end{gather}
for some integer $v \geq1$. 
By an argument similar to \cite[$(38)$]{Cassels1961} and its sequel, it readily follows that
\begin{gather*}
\sum_{\substack{n \in \mathfrak{T}_\alpha(N) \\ \text{$\mathfrak{p}$ in \eqref{eq:10100000} satisfies $p \leq N \log{N}$}}}
\frac{1}{(n+\alpha)^\sigma}
\leq N^{-\sigma} \sum_{\substack{\mathfrak{p} \in P_\alpha \\ p \leq N \log{N}}} 1
\ll N^{1-\sigma} 
\end{gather*}
and 
\begin{align*}
\left\{n \in \mathfrak{T}_\alpha(N) ~\middle|~ 
\text{$\mathfrak{p}$ in \eqref{eq:10100000} satisfies $p> N \log{N}$} \right\}
\subset \mathcal{K}_\alpha(N). 
\end{align*}
Therefore, we obtain the desired result by $\rho \leq 0.48+o(1)$. 
\end{proof}

\begin{remark}\label{rem:4.7}
Cassels originally considered in \cite{Cassels1961} the factorization of the integral ideal $(n+\alpha) \mathfrak{a}$ similar to \eqref{eq:10071704} but the set $P_\alpha$ is replaced by $P'_\alpha$ containing the prime ideals $\mathfrak{p}$ such that 
\begin{itemize}
\item[$(\mathrm{i})$]
$\mathfrak{p}$ is of the first degree and unambiguous; 
\item[$(\mathrm{ii})$]
for any integer $m$, if $\mathfrak{p} \mid (m+\alpha) \mathfrak{a}$ then $\mathfrak{p}' \nmid (m+\alpha) \mathfrak{a}$ for any prime ideal $\mathfrak{p}' \neq \mathfrak{p}$ with $\mathrm{N}(\mathfrak{p})=\mathrm{N}(\mathfrak{p}')$. 
\end{itemize}
Then it was claimed in \cite[p.\ 181]{Cassels1961} that the norm of $\mathfrak{b}'_n$ is bounded for $n \in \mathbb{Z}$, where the ideal $\mathfrak{b}'_n$ is similar to $\mathfrak{b}_n$ as in \eqref{eq:10071704}, that is, it contains all prime factors of $(n+\alpha) \mathfrak{a}$ which are not in $P'_\alpha$. 
However, we have a counterexample to the boundedness of $\mathrm{N}(\mathfrak{b}'_n)$. 
Let $\alpha=7\sqrt{2}$. 
Then we have $K=\mathbb{Q}(\sqrt{2})$ and $\mathfrak{a}=(1)$. 
Note that $\mathfrak{p}_1=(3+\sqrt{2})$ and $\mathfrak{p}_2=(3-\sqrt{2})$ are prime ideals of $K$ satisfying $\mathrm{N}(\mathfrak{p}_1)=\mathrm{N}(\mathfrak{p}_2)=7$ and $(7)=\mathfrak{p}_1 \mathfrak{p}_2$. 
For any $k \in \mathbb{Z}$, we have
\begin{gather*}
(7k+\alpha)
= \mathfrak{p}_1 \mathfrak{p}_2 (k+\sqrt{2}). 
\end{gather*}
It implies that the above condition $(\mathrm{ii}')$ is not satisfied for $\mathfrak{p}_1$ and $\mathfrak{p}_2$. 
Therefore, they are prime factors of the ideal $\mathfrak{b}'_n$ for $n=7k$. 
The norm of $(k+\sqrt{2})$ is equal to $k^2-2$. 
By a simple application of Hensel's lemma, for any $v \geq1$, there exists an integer $k_v$ such that $k_v^2-2$ is divisible by $7^v$. 
Hence $(k_v+\sqrt{2})$ is divisible by either $\mathfrak{p}_1^v$ or $\mathfrak{p}_2^v$ for any $v \geq0$. 
As a result, either one of 
\begin{gather*}
\mathfrak{p}_1^{v} \mid \mathfrak{b}'_n
\quad\text{and}\quad
\mathfrak{p}_2^{v} \mid \mathfrak{b}'_n
\end{gather*}
is valid for $n=7k_v$, and we have $\mathrm{N}(\mathfrak{b}'_n) \geq 7^v$ in either case. 
Therefore $\mathrm{N}(\mathfrak{b}'_n)$ is unbounded as $n$ varies over rational integers. 

Recall that condition $(\mathrm{ii})$ was used only in \cite[$(34)$]{Cassels1961} to see that there is no contribution of terms for $\mathfrak{p}_1 \neq \mathfrak{p}_2$ and $p_1=p_2$ in the sum containing $\sigma_3(n)^2$ which corresponds to \eqref{eq:03290022}. 
This suggests that we do not need here to take into account the higher than $1$ powers of prime ideals by the definition of $\sigma_3(n)$. 
Note that the above counterexample is based on high powers of finitely many prime ideals. 
Then the original proof by Cassels may be recovered by some modification on the classification of prime ideals. 
For example, one can probably recover the proof by modifying condition $(\mathrm{ii})$ as 
\begin{itemize}
\item[$(\mathrm{ii}')$]
if $u_n(\mathfrak{p})=1$, then $u_n(\mathfrak{p}') \neq 1$ for any $\mathfrak{p}' \neq \mathfrak{p}$ with the same norm. 
\end{itemize}
This formulation appears to match the original conception of the proof by Cassels. 
On the other hand, as seen above, we can use Lemma \ref{lem:4.2'} in place of condition $(\mathrm{ii})$ to show that the contribution of the terms for $\mathfrak{p}_1 \neq \mathfrak{p}_2$ and $p_1=p_2$ in \eqref{eq:03290022} is not zero but can be negligible. 
This seems like a simpler solution, and for this reason, we decided to use only $(\mathrm{i})$ in the classification of prime ideals in this paper. 

Lastly, we note that the work of Worley \cite{Worley1970}, Mishou \cite{Mishou2008}, and Lee--Mishou \cite{LeeMishou2020} is based on the same erroneous characterization of the prime ideals as Cassels \cite{Cassels1961}, but we can recover the proofs by the modification explained in this paper. 
\end{remark}

\section{Proof of Theorem \ref{thm:1.3}}\label{sec:6}

\subsection{Support of random variables}\label{sec:6.1}
Let $\mu$ be a probability measure on $(\mathbb{C}, \mathcal{B}(\mathbb{C}))$. 
The support of $\mu$ is defined as
\begin{gather*}
\supp(\mu)
= \left\{ z \in \mathbb{C} ~\middle|~ \text{$\mu(A)>0$ holds for any set $A$ with $z \in A^i$} \right\}, 
\end{gather*}
where $A^i$ denotes the interior of $A$. 
For a random variable $\mathcal{X}$, we define $\supp(\mathcal{X})$ as the support of the law of $\mathcal{X}$. 
Then $\supp(\mathcal{X})$ is a non-empty closed set of $\mathbb{C}$. 
For closed subsets $A_j \subset \mathbb{C}$ for $1 \leq j \leq n$, we denote by $A_1+\cdots+A_n$ the set of all points $a_1+\cdots+a_n$ with $a_j \in A_j$ for $1 \leq j \leq n$. 
If every $A_j$ is bounded, then we see that $A_1+\cdots+A_n$ is again closed. 
Therefore, if $\mathcal{X}_1, \ldots, \mathcal{X}_n$ are independent random variables such that every $\supp(\mathcal{X}_j)$ is bounded, then 
\begin{gather}\label{eq:10102313}
\supp(S_n)
= \overline{\supp(\mathcal{X}_1)+ \cdots +\supp(\mathcal{X}_n)}
= \supp(\mathcal{X}_1)+ \cdots +\supp(\mathcal{X}_n)
\end{gather}
holds for $S_n= \mathcal{X}_1+\cdots+\mathcal{X}_n$; see the proof of \cite[Proposition B.10.8]{Kowalski2021}. 
The purpose of this subsection is to study the support of $\zeta_N(\sigma,\mathbb{X}_\alpha)$ defined as \eqref{eq:10121134}. 

\begin{proposition}\label{prop:6.1}
Let $\mathcal{A}_{d}$ denote the set of Proposition \ref{prop:4.1}. 
Let $1/2<\sigma<1$ be a fixed real number. 
For any $z_0 \in \mathbb{C}$, there exists an integer $N_2=N_2(d,\sigma,z_0)$ depending only on $d,\sigma$, and $z_0$ such that 
\begin{gather}\label{eq:10110027}
z_0 
\in \supp \left(\zeta_N(\sigma,\mathbb{X}_\alpha)\right)
\end{gather}
holds for all $\alpha \in \mathcal{A}_{d}$ and $N \geq N_2$. 
\end{proposition}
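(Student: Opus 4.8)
The plan is to realize $z_0$ as a point of the support of $\zeta_N(\sigma,\mathbb{X}_\alpha)$ by decomposing the truncated series into a ``structured'' part made of independent random variables each uniformly distributed on a circle centred at the origin, plus a bounded ``unstructured'' remainder, and then to use the description \eqref{eq:10102313} of the support of a sum of independent random variables with bounded support. The mechanism is that the structured part can be forced to have support a disc whose radius exceeds $|z_0|$, uniformly in $\alpha\in\mathbb{A}_{c,d}$.

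First I would fix $N$ large and set $N_0=N_0(N)\asymp N/\log N$ (say the largest integer with $N_0\log N_0\le N$), so that $N_0\to\infty$, $N_0=o(N)$, and the block $(N_0,N]$ has length $\asymp N$. Applying Proposition \ref{prop:4.1} with the productive block $(N_0,N]$ and the forbidden range $\{0,\dots,N\}$ in place of $\mathcal{L}(N)$ and $\mathcal{M}(N)$ (the Section \ref{sec:4} argument applies verbatim, both ranges having length $\asymp N$, and stays uniform for $\alpha\in\mathbb{A}_{c,d}$), I obtain a set $\mathcal{K}\subset\{0,\dots,N\}$ such that each $n\in\mathcal{K}$ admits a prime ideal that divides $(n+\alpha)\mathfrak{a}$ but no $(m+\alpha)\mathfrak{a}$ with $m\in\{0,\dots,N\}\setminus\{n\}$, and
\begin{gather*}
S_\mathcal{K}:=\sum_{n\in\mathcal{K}}\frac{1}{(n+\alpha)^\sigma}
>\frac{51}{100}\,\Sigma,
\qquad
\Sigma:=\sum_{N_0<n\le N}\frac{1}{(n+\alpha)^\sigma}\asymp_\sigma N^{1-\sigma},
\end{gather*}
uniformly for $\alpha\in\mathbb{A}_{c,d}$ once $N$ is large (cf.\ \eqref{eq:10092125}). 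By the argument of Lemma \ref{lem:2.4} with $\{0,\dots,N\}$ in place of $\mathcal{M}(N)$, the random variables $\{\mathbb{X}_\alpha(n)\}_{n\in\mathcal{K}}$ together with $\mathcal{Y}:=\sum_{n\in\{0,\dots,N\}\setminus\mathcal{K}}\mathbb{X}_\alpha(n)(n+\alpha)^{-\sigma}$ are jointly independent, and since $0<\alpha<1$ for $\alpha\in\mathbb{A}_{c,d}$, Lemma \ref{lem:2.1} makes each $\mathbb{X}_\alpha(n)$ with $n\in\mathcal{K}$ uniform on the unit circle. Hence $\zeta_N(\sigma,\mathbb{X}_\alpha)=\sum_{n\in\mathcal{K}}\mathbb{X}_\alpha(n)(n+\alpha)^{-\sigma}+\mathcal{Y}$, and \eqref{eq:10102313} gives
\begin{gather*}
\supp\big(\zeta_N(\sigma,\mathbb{X}_\alpha)\big)
=\Big(\sum_{n\in\mathcal{K}}C_{r_n}\Big)+\supp(\mathcal{Y}),
\qquad r_n:=(n+\alpha)^{-\sigma},
\end{gather*}
where $C_r=\{z\in\mathbb{C}:|z|=r\}$ is the support of $\mathbb{X}_\alpha(n)(n+\alpha)^{-\sigma}$.

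Next I would identify the Minkowski sum $\sum_{n\in\mathcal{K}}C_{r_n}$. It is compact, connected, and invariant under rotation about $0$; it contains the point $R:=S_\mathcal{K}$, and it contains $0$ as soon as $2\max_{n\in\mathcal{K}}r_n\le R$, which holds for $N$ large because $\max_{n\in\mathcal{K}}r_n\le N_0^{-\sigma}\to0$ while $R>\tfrac{51}{100}\Sigma\gg_\sigma N^{1-\sigma}\to\infty$. A compact connected rotation-invariant subset of $\mathbb{C}$ containing $0$ and $R$ is the closed disc $\{|z|\le R\}$, so $\sum_{n\in\mathcal{K}}C_{r_n}=\{|z|\le R\}$ and therefore $\supp(\zeta_N(\sigma,\mathbb{X}_\alpha))\supset\{z:|z-w|\le R\}$ for every $w\in\supp(\mathcal{Y})$. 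It remains to check $|z_0-w|\le R$. From $|w|\le\sum_{n=0}^N r_n-R$ together with $\sum_{n=0}^N r_n=(1+o(1))\Sigma$ (the block $\{0,\dots,N_0\}$ contributing only $O(N_0^{1-\sigma})=o(\Sigma)$) one gets $\sup_{w\in\supp(\mathcal{Y})}|w|<(\tfrac{49}{100}+o(1))\Sigma$, hence
\begin{gather*}
R-\sup_{w\in\supp(\mathcal{Y})}|w|
>\frac{51}{100}\Sigma-\Big(\frac{49}{100}+o(1)\Big)\Sigma
=\Big(\frac{1}{50}-o(1)\Big)\Sigma\gg_\sigma N^{1-\sigma},
\end{gather*}
all bounds uniform for $\alpha\in\mathbb{A}_{c,d}$. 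Thus there is $N_2=N_2(c,d,\sigma,z_0)$ with $R-|w|\ge|z_0|$ for every $w\in\supp(\mathcal{Y})$, all $N\ge N_2$, and all $\alpha\in\mathbb{A}_{c,d}$; fixing any $w\in\supp(\mathcal{Y})$ (nonempty) gives $|z_0-w|\le|z_0|+|w|\le R$, i.e.\ $z_0\in\supp(\zeta_N(\sigma,\mathbb{X}_\alpha))$, which is \eqref{eq:10110027}.

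The main obstacle is not the final geometry — once the structured support is a large disc, $z_0$ is captured automatically — but the arithmetic input: one must extract a family $\mathcal{K}$ of indices $n\le N$ that each possess a prime ideal divisor of $(n+\alpha)\mathfrak{a}$ avoiding \emph{all} other $(m+\alpha)\mathfrak{a}$ with $m\le N$ (to secure the independence) while collectively carrying more than $50\%$ of the mass $\sum_{N_0<n\le N}(n+\alpha)^{-\sigma}$, so that after subtracting the worst-case modulus of the unstructured tail $\mathcal{Y}$ a fixed positive fraction of the mass survives and diverges with $N$. This is exactly where the $51\%$ margin of Proposition \ref{prop:4.1}, and — crucially for a uniform exceptional set later — its uniformity in $\alpha\in\mathbb{A}_{c,d}$, are used; the only extra technicality is reconciling the truncation length $N$ with the block lengths of Section \ref{sec:4}.
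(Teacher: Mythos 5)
Your argument follows the same blueprint as the paper's: split $\zeta_N(\sigma,\mathbb{X}_\alpha)$ into a productive part $\sum_{n\in\mathcal{K}}\mathbb{X}_\alpha(n)(n+\alpha)^{-\sigma}$ plus a remainder $\mathcal{Y}$; invoke Lemma~\ref{lem:2.4} for joint independence, Lemma~\ref{lem:2.1} to identify the circular supports, \eqref{eq:10102313} for the Minkowski-sum description of the support; identify the productive part's support as a disc of radius $R=\sum_{n\in\mathcal{K}}(n+\alpha)^{-\sigma}$; and show $R$ overwhelms $|z_0|+\sup|\mathcal{Y}|$ using the $51\%/49\%$ margin from Proposition~\ref{prop:4.1}, uniformly in $\alpha\in\mathbb{A}_{c,d}$. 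The paper's own proof works with the index ranges $\mathcal{L}(N)=(N,N\log N]$, $\mathcal{M}(N)=[0,N\log N]$ and then closes with ``we obtain \eqref{eq:10110027} by changing variables''; since $\sum_{n\in\mathcal{M}(N)}\mathbb{X}_\alpha(n)(n+\alpha)^{-\sigma}=\zeta_{\lfloor N\log N\rfloor}(\sigma,\mathbb{X}_\alpha)$, this last step is, strictly speaking, a hand-wave (the map $N\mapsto\lfloor N\log N\rfloor$ does not hit every integer). Your re-indexing — productive block $(N_0,N]$ with $N_0\asymp N/\log N$ and forbidden range $\{0,\dots,N\}$ — removes exactly this gap, and this is a real, if modest, gain in rigor.

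Two caveats. First, Proposition~\ref{prop:4.1} is stated only for the specific ranges $\mathcal{L}(N),\mathcal{M}(N)$, so you cannot ``apply'' it literally with $(N_0,N]$ and $\{0,\dots,N\}$; you must re-run the Cassels argument of Section~\ref{sec:4} with the thresholds $N,\sqrt{N}$ replacing $N\log N,\sqrt{N\log N}$. Your parenthetical that ``the Section~\ref{sec:4} argument applies verbatim'' is plausibly true (the key asymptotics $\log n\sim\log N$ for $n\in(N_0,N]$, the prime ideal theorem yielding the constants $1/2$ and $3/8$ in Proposition~\ref{prop:4.6}, and the error terms $N^{1/2-\sigma}(\log N)^{1/2}=o(N^{1-\sigma})$ all survive), but this is not trivial and deserves a few sentences rather than a parenthetical. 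Second, your claim ``a compact connected rotation-invariant subset of $\mathbb{C}$ containing $0$ and $R$ is the closed disc $\{|z|\le R\}$'' is false as stated (any disc of radius $\ge R$ qualifies); the conclusion is still correct, but only after adding the trivial observation that the Minkowski sum $\sum_{n\in\mathcal{K}}C_{r_n}$ is contained in $\{|z|\le\sum r_n\}=\{|z|\le R\}$. The paper sidesteps this by citing \cite[Theorem~9]{JessenWintner1935}, which gives the annulus directly; your topological argument is more self-contained but needs that one extra line.
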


\begin{proof}
Let $\mathcal{K}_\alpha(N) = \left\{ n_1, \ldots, n_k \right\}$ and $\mathcal{M}(N) \setminus \mathcal{K}_\alpha(N)= \left\{ n_{k+1}, \ldots, n_\ell \right\}$ be as in Lemma \ref{lem:2.4}. 
Then the random variables 
\begin{gather}\label{eq:10110025}
\frac{\mathbb{X}_\alpha(n_1)}{(n_1+\alpha)^\sigma}, \ldots, \frac{\mathbb{X}_\alpha(n_k)}{(n_k+\alpha)^\sigma},\,
\sum_{j=k+1}^{\ell} \frac{\mathbb{X}_\alpha(n_j)}{(n_j+\alpha)^\sigma}
\end{gather}
are independent.
Here, we note that the support of the random variable $\mathbb{X}_\alpha(n)/(n+\alpha)^\sigma$ is equal to the circle $\{z : |z|=(n+\alpha)^{-\sigma}\}$. 
Thus we apply \eqref{eq:10102313} to deduce
\begin{align*}
\supp \left( \sum_{n \in \mathcal{M}(N)} \frac{\mathbb{X}_\alpha(n)}{(n+\alpha)^\sigma} \right) 
&= \supp \left(\frac{\mathbb{X}_\alpha(n_1)}{(n_1+\alpha)^\sigma}\right)
+ \cdots +\supp \left(\frac{\mathbb{X}_\alpha(n_k)}{(n_k+\alpha)^\sigma}\right) \\
&\qquad
+ \supp \left(\sum_{j=k+1}^{\ell} \frac{\mathbb{X}_\alpha(n_j)}{(n_j+\alpha)^\sigma}\right). 
\end{align*}
Furthermore, we deduce from \cite[Theorem 9]{JessenWintner1935} that
\begin{gather*}
\supp \left(\frac{\mathbb{X}_\alpha(n_1)}{(n_1+\alpha)^\sigma}\right)
+ \cdots +\supp \left(\frac{\mathbb{X}_\alpha(n_k)}{(n_k+\alpha)^\sigma}\right)
= \{ z :  a_N \leq |z| \leq b_N \}, 
\end{gather*}
where $a_N$ and $b_N$ are non-negative real numbers determined as follows. 
Assume that there exists an element $n_{j_0} \in \mathcal{K}_\alpha(N)$ such that the inequality
\begin{gather}\label{eq:10110016}
\frac{1}{(n_{j_0}+\alpha)^\sigma}
> \sum_{\substack{1 \leq j \leq k \\ j \neq j_0}} \frac{1}{(n_j+\alpha)^\sigma}
\end{gather}
is satisfied. 
In this case we put
\begin{gather*}
a_N
=\frac{1}{(n_{j_0}+\alpha)^\sigma}
- \sum_{\substack{1 \leq j \leq k \\ j \neq j_0}} \frac{1}{(n_j+\alpha)^\sigma}. 
\end{gather*}
Otherwise, we put $a_N=0$. 
Also, the non-negative real number $b_N$ is determined as
\begin{gather*}
b_N
= \frac{1}{(n_1+\alpha)^\sigma}+ \cdots + \frac{1}{(n_k+\alpha)^\sigma}
= \sum_{n \in \mathcal{K}_\alpha(N)} \frac{1}{(n+\alpha)^\sigma}. 
\end{gather*}
Note that \eqref{eq:10110016} is equivalent to 
\begin{gather*}
\frac{2}{(n_{j_0}+\alpha)^\sigma}
> \sum_{n \in \mathcal{K}_\alpha(N)} \frac{1}{(n+\alpha)^\sigma}. 
\end{gather*}
However, we have $2/(n_{j_0}+\alpha)^\sigma \ll N^{-\sigma}$ due to $n_{j_0} \in (N, N \log{N}]$, while the right-hand side has the lower bound
\begin{gather*}
\sum_{n \in \mathcal{K}_\alpha(N)} \frac{1}{(n+\alpha)^\sigma}
> \frac{51}{100} \sum_{n \in \mathcal{L}(N)} \frac{1}{(n+\alpha)^\sigma}
\gg_\sigma (N \log{N})^{1-\sigma}
\end{gather*}
for $\alpha \in \mathcal{A}_{d}$ and $N \geq N_1=N_1(d,\sigma)$ by Proposition \ref{prop:4.1}. 
Therefore the case in which \eqref{eq:10110016} is valid does not occur for $N \geq N_1$. 
Hence \eqref{eq:10110025} yields
\begin{gather}\label{eq:10110052}
\supp \left( \sum_{n \in \mathcal{M}(N)} \frac{\mathbb{X}_\alpha(n)}{(n+\alpha)^\sigma} \right)
= \{z : |z| \leq b_N \} 
+ \supp \left(\sum_{j=k+1}^{\ell} \frac{\mathbb{X}_\alpha(n_j)}{(n_j+\alpha)^\sigma}\right)
\end{gather}
for $N \geq N_1$.
Then we prove \eqref{eq:10110027} by using this formula. 
Let $z_0 \in \mathbb{C}$. 
We take an arbitrarily complex number $w_0$ such that 
\begin{gather*}
w_0
\in \supp \left(\sum_{j=k+1}^{\ell} \frac{\mathbb{X}_\alpha(n_j)}{(n_j+\alpha)^\sigma}\right).  
\end{gather*}
Then the absolute value of $w_0$ is bounded as
\begin{gather}\label{eq:10110201}
|w_0| 
\leq \sum_{j=k+1}^{\ell} \frac{1}{(n_j+\alpha)^\sigma}
= \sum_{n=0}^{N} \frac{1}{(n+\alpha)^\sigma}
+ \sum_{n \in \mathcal{L}(N) \setminus \mathcal{K}_\alpha(N)} \frac{1}{(n+\alpha)^\sigma}. 
\end{gather}
Note that we have 
\begin{gather*}
\sum_{n=0}^{N} \frac{1}{(n+\alpha)^\sigma}
\asymp_{d,\sigma} N^{1-\sigma}
\quad\text{and}\quad
\sum_{n \in \mathcal{L}(N)} \frac{1}{(n+\alpha)^\sigma}
\asymp_\sigma (N \log{N})^{1-\sigma}
\end{gather*}
for $\alpha \in \mathcal{A}_{d}$ since $\alpha>d^{-1}$ holds by the definition of $\mathcal{A}_{d}$. 
Thus we see that 
\begin{gather*}
\sum_{n=0}^{N} \frac{1}{(n+\alpha)^\sigma}
\leq \frac{1}{200} \sum_{n \in \mathcal{L}(N)} \frac{1}{(n+\alpha)^\sigma}
\end{gather*}
holds for any $N \geq N_{2,1}$, where $N_{2,1}=N_{2,1}(d,\sigma)$ is an integer depending only on $d$ and $\sigma$. 
By Proposition \ref{prop:4.1}, we also obtain
\begin{gather*}
\sum_{n \in \mathcal{L}(N) \setminus \mathcal{K}_\alpha(N)} \frac{1}{(n+\alpha)^\sigma}
< \frac{49}{100} \sum_{n \in \mathcal{L}(N)} \frac{1}{(n+\alpha)^\sigma}
\end{gather*}
for $N \geq N_1$. 
Combining these inequalities, we deduce from \eqref{eq:10110201} that
\begin{gather*}
|w_0| 
< \frac{99}{200} \sum_{n \in \mathcal{L}(N)} \frac{1}{(n+\alpha)^\sigma}
\end{gather*}
is satisfied for any $N \geq \max\{N_1, N_{2,1}\}$. 
On the other hand, there exists an integer $N_{2,2}=N_{2,2}(\sigma,z_0)$ depending only on $\sigma$ and $z_0$ such that 
\begin{gather*}
|z_0|
\leq \frac{1}{200} \sum_{n \in \mathcal{L}(N)} \frac{1}{(n+\alpha)^\sigma}
\end{gather*}
for any $N \geq N_{2,2}$. 
From these, we obtain
\begin{gather*}
|z_0-w_0|
\leq \frac{50}{100} \sum_{n \in \mathcal{L}(N)} \frac{1}{(n+\alpha)^\sigma}
< \sum_{n \in \mathcal{K}_\alpha(N)} \frac{1}{(n+\alpha)^\sigma}
= b_N 
\end{gather*}
for $N \geq \max\{N_1, N_{2,1}, N_{2,2}\}$ by applying Proposition \ref{prop:4.1} again. 
This implies that $z_0-w_0 \in \{z : |z| \leq b_N \}$, and furthermore, we deduce from \eqref{eq:10110052} that
\begin{gather*}
z_0
= (z_0-w_0)+w_0
\in \supp \left( \sum_{n \in \mathcal{M}(N)} \frac{\mathbb{X}_\alpha(n)}{(n+\alpha)^\sigma} \right). 
\end{gather*}
Therefore we obtain \eqref{eq:10110027} by changing variables. 
\end{proof}

\subsection{A restricted expectation}\label{sec:6.2}
Let $\mathcal{X}$ be a random variable defined on the space $(\Omega, \mathcal{F}, \mathbf{P})$. 
For an event $\Omega_0 \in \mathcal{F}$, we define
\begin{gather*}
\mathbf{E}\left[\mathcal{X} : \Omega_0\right]
= \int_{\Omega_0} \mathcal{X}(\omega) \,\mathbf{P}(d \omega). 
\end{gather*}
In this subsection, we evaluate $\mathbf{E}\left[|\zeta(\sigma, \mathbb{X}_\alpha)-\zeta_N(\sigma, \mathbb{X}_\alpha)|^2 : \Omega_0(N) \right]$ with an event $\Omega_0(N)$ defined as follows. 
By the definition of the support, \eqref{eq:10110027} implies that, for any $\epsilon>0$, we can take at least one element $\omega_0 \in \Omega$ such that $|\zeta_N(\sigma, \mathbb{X}_\alpha)(\omega_0) -z_0|<\epsilon$ is satisfied. 
Put $\theta_n=\arg \mathbb{X}_\alpha(n)(\omega_0)$ for $0 \leq n \leq N$. 
Then we define
\begin{align*}
\Omega_0(N)
&= \Omega_0(N; \alpha,\delta) \\
&= \left\{\omega \in \Omega ~\middle|~ 
\text{$\mathbb{X}_\alpha(n)(\omega) \in A(\theta_n-2\pi \delta,\theta_n+2\pi \delta)$ for all $0 \leq n \leq N$}\right\}
\end{align*}
for $0<\delta<1/2$, where $A(s,t)$ denotes the arc as in \eqref{eq:10121028}. 
Denote by $\mathbf{1}_{A(s,t)}$ the indicator function of $A(s,t)$. 
We begin by showing the following preliminary lemmas. 

\begin{lemma}\label{lem:5.2}
Let $s,t \in \mathbb{R}$ with $0<t-s \leq 2\pi$, and let $\Delta>1$.  
Then there exist Laurent polynomials
\begin{gather}\label{eq:10110703}
\mathscr{U}_{s,t}(z, \Delta)
= \sum_{|m| \leq \Delta} \widetilde{U}_{s,t}(m, \Delta) z^m, 
\quad
\mathscr{K}_{s,t}(z, \Delta)
= \sum_{|m| \leq \Delta} \widetilde{K}_{s,t}(m, \Delta) z^m 
\end{gather}
such that the inequality
\begin{gather}\label{eq:10100341}
|\mathbf{1}_{A(s,t)}(z) - \mathscr{U}_{s,t}(z, \Delta)| 
\leq \mathscr{K}_{s,t}(z, \Delta) 
\end{gather}
holds for $z \in \mathbb{C}$ with $|z|=1$, where $\widetilde{U}_{s,t}(m, \Delta)$ and $\widetilde{K}_{s,t}(m, \Delta)$ satisfy
\begin{align}\label{eq:03252312}
\widetilde{U}_{s,t}(0, \Delta)
= \frac{t-s}{2\pi}, 
\quad
\widetilde{U}_{s,t}(m, \Delta)
\ll \frac{1}{|m|} 
\quad (m \neq0), 
\quad\text{and}\quad
\widetilde{K}_{s,t}(0, \Delta)
\ll \frac{1}{\Delta}  
\end{align}
with absolute implied constants. 
\end{lemma}

\begin{proof}
Let $\psi(x)$ denote the saw-tooth function presented by
\begin{align*}
\psi(x)
= 
\begin{cases}
x-[x]-\frac{1}{2} & \text{if $x \notin \mathbb{Z}$}, 
\\
0 & \text{if $x \in \mathbb{Z}$}.
\end{cases}
\end{align*}
By definition, $\mathbf{1}_{A(s,t)}(z)$ and $\psi(x)$ are related as
\begin{align}\label{eq:03252255}
\mathbf{1}_{A(s,t)}(z)
= \frac{t-s}{2\pi} 
+ \psi \left(\frac{s-\arg(z)}{2\pi}\right)
+ \psi \left(\frac{\arg(z)-t}{2\pi}\right)
\end{align}
if $\arg(z)-s, \arg(z)-t \notin 2\pi \mathbb{Z}$, see also \cite[$(2.5)$]{BartonMontgomeryVaaler2001}. 
For an integrable function $F$ on $\mathbb{R}$, we put $F_\delta(x)= \delta F(\delta x)$. 
Then the Fourier transform of $F_\delta$ satisfies $\hat{F}_\delta(t)=\hat{F}(\delta^{-1}t)$ for $\delta>0$. 
Let $J(z)$ and $K(z)$ be the entire functions as in \cite[Section 2]{Vaaler1985}. 
Then we introduce the trigonometric polynomials
\begin{align*}
j_\Delta(x)
= \sum_{|m| \leq \Delta} \hat{J}_{\Delta+1}(m) e^{2\pi imx}
\quad\text{and}\quad
k_\Delta(x)
= \sum_{|m| \leq \Delta} \hat{K}_{\Delta+1}(m) e^{2\pi imx} 
\end{align*}
according to \cite[Section 7]{Vaaler1985}. 
Furthermore, if we define 
\begin{align*}
\psi*j_\Delta(x)
= \int_{-1/2}^{1/2} \psi(x-\xi) j_\Delta(\xi) \,d \xi
= \sum_{\substack{|m| \leq \Delta \\ m \neq0}} \frac{\hat{J}_{\Delta+1}(m)}{-2\pi im} e^{2\pi imx}, 
\end{align*}
then we deduce from \cite[Theorem 18]{Vaaler1985} the inequality 
\begin{align}\label{eq:03252256}
\left|\psi(x)-\psi*j_\Delta(x)\right|
\leq (2\Delta+2)^{-1} k_\Delta(x). 
\end{align}
Therefore, inequality \eqref{eq:10100341} can be deduced from \eqref{eq:03252255} and \eqref{eq:03252256} if we assume $\arg(z)-s, \arg(z)-t \notin 2\pi \mathbb{Z}$, where 
\begin{align*}
\widetilde{U}_{s,t}(m, \Delta)
&= 
\begin{cases}
\displaystyle{\frac{t-s}{2\pi}}
&\quad (m=0), 
\\[2mm]
\displaystyle{ \frac{\hat{J}_{\Delta+1}(-m) e^{-ims} - \hat{J}_{\Delta+1}(m) e^{-imt}}{2\pi i m}}
&\quad (m \neq0), 
\end{cases}
\\
\widetilde{K}_{s,t}(m, \Delta)
&= \frac{\hat{K}_{\Delta+1}(-m) e^{-ims} + \hat{K}_{\Delta+1}(m) e^{-imt}}{2\Delta+2}. 
\end{align*}
Furthermore, we see that \eqref{eq:10100341} holds for any $z$ by the continuity of $\mathscr{U}_{s,t}(z, \Delta)$ and $\mathscr{K}_{s,t}(z, \Delta)$. 
Recall that $\hat{J}_{\Delta+1}(m)$ and $\hat{K}_{\Delta+1}(m)$ are calculated as
\begin{align*}
\hat{J}_{\Delta+1}(m)
&= \frac{\pi m}{\Delta+1} \left(1-\frac{|m|}{\Delta+1}\right) \cot \frac{\pi m}{\Delta+1}
+ \frac{|m|}{\Delta+1}
\quad (m \neq0), \\
\hat{K}_{\Delta+1}(m)
&= 1-\frac{|m|}{\Delta+1}
\end{align*}
for $|m| \leq \Delta$ by \cite[p.~339]{BartonMontgomeryVaaler2001}. 
Hence \eqref{eq:03252312} immediately follows. 
\end{proof}

\begin{lemma}\label{lem:5.3}
Let $s_n,t_n \in \mathbb{R}$ with $0<t_n-s_n \leq 2\pi$ for $0 \leq n \leq N$, and let $\Delta \geq3$. 
Then we have 
\begin{align}\label{eq:10100402}
\left| \prod_{n=0}^{N} \mathbf{1}_{A(s_n,t_n)}(z_n) 
- \prod_{n=0}^{N} \mathscr{U}_{s_n,t_n}(z_n, \Delta) \right| 
\ll (\log{\Delta})^{N+1} \sum_{n=0}^{N} \mathscr{K}_{s_n,t_n}(z_n, \Delta) 
\end{align}
for $(z_0,\ldots,z_N) \in \mathbb{\mathbb{C}}^{N+1}$ with $|z_0|=\cdots=|z_N|=1$, where $\mathscr{U}_{s,t}(z, \Delta)$ and $\mathscr{K}_{s,t}(z, \Delta)$ are as in Lemma \ref{lem:5.2}, and the implied constant is absolute. 
\end{lemma}

\begin{proof}
Note that it is deduced from \eqref{eq:03252312} the estimate
\begin{gather}\label{eq:10102249}
\mathscr{U}_{s,t}(z, \Delta)
\ll 1+\sum_{0<m \leq \Delta} \frac{1}{m}
\ll \log{\Delta}. 
\end{gather}
Then we prove \eqref{eq:10100402} by induction on $N$. 
Firstly, Lemma \ref{lem:5.2} yields the result for $N=0$. 
Furthermore, we have 
\begin{align*}
&\left| \prod_{n=0}^{N+1} \mathbf{1}_{A(s_n,t_n)}(z_n) 
- \prod_{n=0}^{N+1} \mathscr{U}_{s_n,t_n}(z_n, \Delta) \right| \\
&\leq \prod_{n=0}^{N} \mathbf{1}_{A(s_n,t_n)}(z_n) 
\cdot \Big| \mathbf{1}_{A(s_{N+1},t_{N+1})}(z_{N+1}) - \mathscr{U}_{s_{N+1},t_{N+1}}(z_{N+1}, \Delta) \Big| \\
&\qquad
+ \left| \prod_{n=0}^{N} \mathbf{1}_{A(s_n,t_n)}(z_n) 
- \prod_{n=0}^{N} \mathscr{U}_{s_n,t_n}(z_n, \Delta) \right| 
\cdot \Big|\mathscr{U}_{s_{N+1},t_{N+1}}(z_{N+1}, \Delta)\Big|. 
\end{align*}
By the inductive assumption and \eqref{eq:10102249}, we obtain the result for $N+1$. 
\end{proof}

Applying Lemma \ref{lem:5.3}, we obtain a good approximation of the indicator function $\mathbf{1}_{\Omega_0(N)}(\omega)$. 
Then we prove the following propositions which are used to complete the proof of Theorem \ref{thm:1.3}. 

\begin{proposition}\label{prop:6.2}
Let $N_2$ be the integer as in Proposition \ref{prop:6.1}. 
For any integer $N \geq N_2$ and $\Delta \geq \Delta_0$ with some absolute constant $\Delta_0$, there exists a finite subset $\mathcal{E}_{d}^{(1)}=\mathcal{E}_{d}^{(1)}(N, \Delta) \subset \mathcal{A}_{d}$ such that 
\begin{gather*}
\mathbf{P}\left(\Omega_0(N)\right)
= (2\delta)^{N+1}
+ O\left(\frac{N (\log{\Delta})^{N+1}}{\Delta}\right)
\end{gather*}
holds for all $\alpha \in \mathcal{A}_{d} \setminus \mathcal{E}_{d}^{(1)}$, where the implied constant is absolute. 
\end{proposition}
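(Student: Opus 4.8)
The plan is to compute $\mathbb{P}(\Omega_0(N))$ by Fourier analysis on the unit circle, combining the Beurling--Selberg approximants of Section \ref{sec:5} with the orthogonality relation \eqref{eq:10010220}. Write $s_n=\theta_n-2\pi\delta$ and $t_n=\theta_n+2\pi\delta$, so that $0<t_n-s_n=4\pi\delta<2\pi$; then $\Omega_0(N)$ is exactly the event $\{\mathbb{X}_\alpha(n)\in A(s_n,t_n)\ \text{for all}\ 0\le n\le N\}$ (the hypothesis $N\ge N_2$ only ensures, via Proposition \ref{prop:6.1}, that $\omega_0$ and hence $\Omega_0(N)$ are well defined), so that
\begin{gather*}
\mathbb{P}(\Omega_0(N))=\mathbb{E}\left[\prod_{n=0}^{N}\mathbf{1}_{A(s_n,t_n)}(\mathbb{X}_\alpha(n))\right].
\end{gather*}
First I apply Lemma \ref{lem:5.3} (valid since $\Delta\ge3$) and take expectations, obtaining
\begin{gather*}
\mathbb{P}(\Omega_0(N))=\mathbb{E}\left[\prod_{n=0}^{N}\mathscr{U}_{s_n,t_n}(\mathbb{X}_\alpha(n),\Delta)\right]+O\!\left((\log\Delta)^{N+1}\sum_{n=0}^{N}\mathbb{E}\left[\mathscr{K}_{s_n,t_n}(\mathbb{X}_\alpha(n),\Delta)\right]\right).
\end{gather*}
Since $\alpha\in\mathbb{A}_{c,d}$ is a quadratic irrational with $0<\alpha<1$, Lemma \ref{lem:2.1} gives that each $\mathbb{X}_\alpha(n)$ is uniformly distributed on the unit circle, hence $\mathbb{E}[\mathbb{X}_\alpha(n)^m]=0$ for every nonzero $m\in\mathbb{Z}$. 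Expanding $\mathscr{K}_{s_n,t_n}$ by \eqref{eq:10110703}, only the $m=0$ Fourier coefficient survives, and by \eqref{eq:101214336} it is $\ll1/\Delta$ with an absolute constant; so the error term above is $O(N(\log\Delta)^{N+1}/\Delta)$ with an absolute implied constant.

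Next I expand the main term. Each factor $\mathscr{U}_{s_n,t_n}(\mathbb{X}_\alpha(n),\Delta)$ is the trigonometric polynomial $\frac{1}{2\pi}\sum_{|m_n|\le\Delta}\widetilde{U}_{s_n,t_n}(m_n,\Delta)\,\mathbb{X}_\alpha(n)^{m_n}$ by \eqref{eq:10110703}, so multiplying out the finitely many sums and using \eqref{eq:10010220} of Theorem \ref{thm:1.1},
\begin{gather*}
\mathbb{E}\left[\prod_{n=0}^{N}\mathscr{U}_{s_n,t_n}(\mathbb{X}_\alpha(n),\Delta)\right]=\frac{1}{(2\pi)^{N+1}}\sum_{\substack{(m_0,\dots,m_N)\in\mathbb{Z}^{N+1}\\ |m_n|\le\Delta,\ \prod_{n=0}^{N}(n+\alpha)^{m_n}=1}}\prod_{n=0}^{N}\widetilde{U}_{s_n,t_n}(m_n,\Delta).
\end{gather*}
The zero tuple always occurs and contributes the diagonal term $\frac{1}{(2\pi)^{N+1}}\prod_{n=0}^{N}\widetilde{U}_{s_n,t_n}(0,\Delta)$; every other surviving tuple is a nonzero integer solution $(m_0,\dots,m_N)$ of $\prod_{n=0}^{N}(n+\alpha)^{m_n}=1$ with $|m_n|\le\Delta$.

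I then take $\mathcal{E}_{c,d}^{(1)}(N,\Delta)$ to be the set of all $\alpha\in\mathbb{A}_{c,d}$ admitting such a nonzero solution, and show it is finite. For a fixed nonzero tuple $(m_0,\dots,m_N)$ with $|m_n|\le\Delta$, the equation $\prod_{n=0}^{N}(n+\alpha)^{m_n}=1$ is equivalent (using $n+\alpha>0$ for $\alpha\in\mathbb{A}_{c,d}$) to $Q(\alpha)=0$, where $Q(x)=\prod_{m_n>0}(n+x)^{m_n}-\prod_{m_n<0}(n+x)^{-m_n}\in\mathbb{Z}[x]$. If $Q$ vanished identically, then by unique factorization in $\mathbb{C}[x]$ the two monic products would have the same roots with multiplicity; but the index sets $\{n:m_n>0\}$ and $\{n:m_n<0\}$ are disjoint and the numbers $-n$ pairwise distinct, forcing all $m_n=0$, a contradiction. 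Hence $Q\not\equiv0$, its zero set is finite, and taking the union over the finitely many admissible tuples shows $\mathcal{E}_{c,d}^{(1)}(N,\Delta)$ is finite. Thus, for $\alpha\in\mathbb{A}_{c,d}\setminus\mathcal{E}_{c,d}^{(1)}(N,\Delta)$ only the diagonal term survives.

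Finally, for such $\alpha$ I use \eqref{eq:10102203} to write $\widetilde{U}_{s_n,t_n}(0,\Delta)=4\pi\delta+O(1/\Delta)$ with an absolute constant, so that
\begin{align*}
\frac{1}{(2\pi)^{N+1}}\prod_{n=0}^{N}\widetilde{U}_{s_n,t_n}(0,\Delta)
&=\frac{1}{(2\pi)^{N+1}}\prod_{n=0}^{N}\bigl(4\pi\delta+O(1/\Delta)\bigr)\\
&=(2\delta)^{N+1}+O\!\left(\frac{N(\log\Delta)^{N+1}}{\Delta}\right),
\end{align*}
where the cross terms are absorbed by a direct expansion using $0<\delta<1/2$ and $\Delta\ge3$ (so that $2\delta<1$, $\log\Delta\ge\log3>1+\tfrac{1}{6\pi}$, and each cross term carries at least one factor $1/\Delta$). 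Combined with the error estimate from the first paragraph, this yields the proposition with an absolute implied constant. The only genuinely new ingredient is the finiteness of $\mathcal{E}_{c,d}^{(1)}(N,\Delta)$, i.e.\ the observation that a nonzero multiplicative relation among $\alpha,1+\alpha,\dots,N+\alpha$ with exponents bounded by $\Delta$ pins $\alpha$ down to the roots of a fixed nonzero polynomial; everything else is routine, with uniformity in $\alpha$ coming from the fact that $\widetilde{K}_{s,t}(0,\Delta)$ and the error in $\widetilde{U}_{s,t}(0,\Delta)$ depend only on $t-s$ (hence only on $\delta$) and on $\Delta$. I expect this finiteness step, and the care needed to keep the final constant absolute, to be the main points of the argument.
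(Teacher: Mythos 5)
Your proof is correct and follows the paper's own argument step for step: apply Lemma \ref{lem:5.3}, Fourier-expand the $\mathscr{U}$ factors via \eqref{eq:10110703}, invoke the orthogonality relation \eqref{eq:10010220}, define $\mathcal{E}_{c,d}^{(1)}$ by the off-diagonal multiplicative relations $\prod_{n=0}^N(n+\alpha)^{m_n}=1$ with $0<\max|m_n|\le\Delta$, and evaluate the surviving diagonal term and the $\mathscr{K}$-error term using \eqref{eq:10102203} and \eqref{eq:101214336}. You supply two useful details the paper glosses over: a proof that $\mathcal{E}_{c,d}^{(1)}$ is actually finite (via the nonvanishing of $Q(x)=\prod_{m_n>0}(n+x)^{m_n}-\prod_{m_n<0}(n+x)^{-m_n}$, which the paper merely asserts), and a careful bound on the cross terms in $\prod_n\bigl(4\pi\delta+O(1/\Delta)\bigr)$ showing the constant can be kept absolute because $\log 3>1+\tfrac{1}{6\pi}$, whereas the paper's intermediate display \eqref{eq:10110755} states a cruder $O(N/\Delta)$ that only becomes legitimate once absorbed into the final $(\log\Delta)^{N+1}/\Delta$ error.
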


\begin{proof}
Put $s_n=\theta_n-2\pi\delta$ and $t_n=\theta_n+2\pi\delta$. 
Then we have 
\begin{align}\label{eq:10110748}
&\mathbf{P}\left(\Omega_0(N)\right)
= \mathbf{E}\left[\prod_{n=0}^{N} \mathbf{1}_{A(s_n,t_n)}(\mathbb{X}_\alpha(n))\right] \\
&= \mathbf{E}\left[\prod_{n=0}^{N} \mathscr{U}_{s_n,t_n}(\mathbb{X}_\alpha(n), \Delta)\right]
+O\left((\log{\Delta})^{N+1} 
\sum_{n=0}^{N} \mathbf{E}[\mathscr{K}_{s_n,t_n}(\mathbb{X}_\alpha(n), \Delta)]\right)
\nonumber
\end{align}
by Lemma \ref{lem:5.3}. 
Inserting \eqref{eq:10110703}, we calculate the main term as
\begin{align}\label{eq:10110739}
&\mathbf{E}\left[\prod_{n=0}^{N} \mathscr{U}_{s_n,t_n}(\mathbb{X}_\alpha(n), \Delta)\right] \\
&= \sum_{|m_0| \leq \Delta} \cdots \sum_{|m_N| \leq \Delta}
\prod_{n=0}^{N} \widetilde{U}_{s_n,t_n}(m_n, \Delta) \,
\mathbf{E}\left[\prod_{n=0}^{N} \mathbb{X}_\alpha(n)^{m_n}\right]. 
\nonumber
\end{align}
Here, we have 
\begin{gather*}
\mathbf{E}\left[\prod_{n=0}^{N} \mathbb{X}_\alpha(n)^{m_n}\right]
=
\begin{cases}
1
& \text{if $\prod_{n=0}^{N}(n+\alpha)^{m_n}=1$},
\\
0
& \text{otherwise}
\end{cases}
\end{gather*}
by condition \eqref{eq:10010220}. 
Then we define $\mathcal{E}_{d}^{(1)}=\mathcal{E}_{d}^{(1)}(N, \Delta)$ as the set of all $\alpha \in \mathcal{A}_{d}$ such that $\prod_{n=0}^{N}(n+\alpha)^{m_n}=1$ for some $(m_0, \ldots, m_N) \in \mathbb{Z}^{N+1} \setminus \{0\}$ with $|m_n| \leq \Delta$ for any $0 \leq n \leq N$. 
Note that $\mathcal{E}_{d}^{(1)}$ is a finite set when $N$ and $\Delta$ are given. 
Furthermore, if $\alpha \in \mathcal{A}_{d} \setminus \mathcal{E}_{d}^{(1)}$, then the terms in \eqref{eq:10110739} vanish unless $m_0= \cdots =m_N=0$. 
Thus we obtain
\begin{gather}\label{eq:10110755}
\mathbf{E}\left[\prod_{n=0}^{N} \mathscr{U}_{s_n,t_n}(\mathbb{X}_\alpha(n), \Delta)\right] 
= \prod_{n=0}^{N} \widetilde{U}_{s_n,t_n}(0, \Delta) 
= (2\delta)^{N+1} 
\end{gather}
by using \eqref{eq:03252312}. 
Then we evaluate the error term in \eqref{eq:10110748}. 
We have 
\begin{gather*}
\mathbf{E}[\mathscr{K}_{s_n,t_n}(\mathbb{X}_\alpha(n), \Delta)]
= \sum_{|m| \leq \Delta} \widetilde{K}_{s_n,t_n}(m, \Delta) 
\mathbf{E}\left[\mathbb{X}_\alpha(n)^m \right]
= \widetilde{K}_{s_n,t_n}(0, \Delta)
\end{gather*}
since $\mathbf{E}\left[\mathbb{X}_\alpha(n)^m \right]=0$ for $m \neq 0$ by \eqref{eq:10010220}. 
Therefore, by \eqref{eq:03252312}, we see that 
\begin{gather}\label{eq:10110756}
(\log{\Delta})^{N+1} 
\sum_{n=0}^{N} \mathbf{E}[\mathscr{K}_{s_n,t_n}(\mathbb{X}_\alpha(n), \Delta)]
\ll \frac{N (\log{\Delta})^{N+1}}{\Delta}. 
\end{gather}
Combining \eqref{eq:10110755} and \eqref{eq:10110756}, we obtain the desired result. 
\end{proof}

\begin{proposition}\label{prop:6.3}
Let $N_2$ be the integer as in Proposition \ref{prop:6.1}. 
Let $1/2<\sigma<1$ be a fixed real number. 
For any integers $N,L$ satisfying $L>N \geq N_2$ and $\Delta \geq \Delta_0$ with some absolute constant $\Delta_0$, there exists a finite subset $\mathcal{E}_{d}^{(2)}=\mathcal{E}_{d}^{(2)}(N,L,\Delta) \subset \mathcal{A}_{d}$ such that 
\begin{align*}
&\mathbf{E}\left[|\zeta(\sigma, \mathbb{X}_\alpha)-\zeta_N(\sigma, \mathbb{X}_\alpha)|^2 : \Omega_0(N) \right] \\
&\ll \mathbf{P}(\Omega_0(N)) N^{1-2\sigma}
+ L^{1-2\sigma}
+ \frac{NL (\log{\Delta})^{N+1}}{\Delta}
\end{align*}
holds for all $\alpha \in \mathcal{A}_{d} \setminus \mathcal{E}_{d}^{(2)}$, where the implied constant depends only on $\sigma$. 
\end{proposition}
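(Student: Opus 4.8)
The plan is to split the tail $\zeta(\sigma,\mathbb{X}_\alpha)-\zeta_N(\sigma,\mathbb{X}_\alpha)=\sum_{n>N}\mathbb{X}_\alpha(n)/(n+\alpha)^\sigma$ at the new parameter $L$ into $A=\sum_{N<n\leq L}\mathbb{X}_\alpha(n)/(n+\alpha)^\sigma$ and $B=\sum_{n>L}\mathbb{X}_\alpha(n)/(n+\alpha)^\sigma$, and to use $|A+B|^2\leq 2|A|^2+2|B|^2$. The part involving $B$ is immediate: since $\Omega_0(N)$ is an event and $|B|^2\geq0$ we have $\mathbb{E}[|B|^2:\Omega_0(N)]\leq\mathbb{E}[|B|^2]$ (no independence of $B$ from $\Omega_0(N)$ is claimed, only nonnegativity), and by the orthogonality $\mathbb{E}[\mathbb{X}_\alpha(m)\overline{\mathbb{X}_\alpha(n)}]=0$ for $m\neq n$ from \eqref{eq:10010220}, together with the almost sure $L^2$-convergence of \eqref{eq:10010000}, this equals $\sum_{n>L}(n+\alpha)^{-2\sigma}\ll_\sigma L^{1-2\sigma}$.

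For the part involving $A$ I would expand $\mathbb{E}[|A|^2:\Omega_0(N)]=\sum_{N<m,n\leq L}\frac{1}{(m+\alpha)^\sigma(n+\alpha)^\sigma}\mathbb{E}[\mathbb{X}_\alpha(m)\overline{\mathbb{X}_\alpha(n)}:\Omega_0(N)]$ and evaluate the restricted expectation. When $m=n$ it equals $\mathbb{P}(\Omega_0(N))$ exactly, since $\mathbb{X}_\alpha(m)\overline{\mathbb{X}_\alpha(m)}=1$. When $m\neq n$ I would write $\Omega_0(N)$ through the arc indicators $\mathbf{1}_{A(s_j,t_j)}(\mathbb{X}_\alpha(j))$ with $s_j=\theta_j-\delta$, $t_j=\theta_j+\delta$, and replace $\prod_{j=0}^{N}\mathbf{1}_{A(s_j,t_j)}(\mathbb{X}_\alpha(j))$ by $\prod_{j=0}^{N}\mathscr{U}_{s_j,t_j}(\mathbb{X}_\alpha(j),\Delta)$ via Lemma \ref{lem:5.3}; because $|\mathbb{X}_\alpha(m)\overline{\mathbb{X}_\alpha(n)}|=1$ this costs an error $\ll(\log\Delta)^{N+1}\sum_{j=0}^{N}\mathbb{E}[\mathscr{K}_{s_j,t_j}(\mathbb{X}_\alpha(j),\Delta)]\ll N(\log\Delta)^{N+1}/\Delta$, exactly as in the proof of Proposition \ref{prop:6.2} (expand $\mathscr{K}$ into Fourier modes, use $\mathbb{E}[\mathbb{X}_\alpha(j)^k]=0$ for $k\neq0$ and \eqref{eq:101214336}). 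The remaining main term is then expanded via \eqref{eq:10110703} into a finite sum of terms $\prod_{j}\widetilde{U}_{s_j,t_j}(k_j,\Delta)\cdot\mathbb{E}\bigl[\mathbb{X}_\alpha(m)\mathbb{X}_\alpha(n)^{-1}\prod_{j=0}^{N}\mathbb{X}_\alpha(j)^{k_j}\bigr]$ with $|k_j|\leq\Delta$, and by \eqref{eq:10010220} each such expectation vanishes unless $(m+\alpha)(n+\alpha)^{-1}\prod_{j=0}^{N}(j+\alpha)^{k_j}=1$.

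This is where the exceptional set enters. I would define $\mathcal{E}_{c,d}^{(2)}(N,L,\Delta)$ to be the set of all $\alpha\in\mathbb{A}_{c,d}$ for which $\prod_{n=0}^{L}(n+\alpha)^{m_n}=1$ holds for some $(m_0,\dots,m_L)\in\mathbb{Z}^{L+1}\setminus\{0\}$ with $|m_n|\leq\Delta$ for $0\leq n\leq N$ and $|m_n|\leq1$ for $N<n\leq L$; then $\mathcal{E}_{c,d}^{(1)}(N,\Delta)\subseteq\mathcal{E}_{c,d}^{(2)}(N,L,\Delta)$. For each fixed admissible tuple, $\prod_n(X+n)^{m_n}=1$ as an identity in $\mathbb{Q}(X)$ would force all $m_n=0$ (unique factorization in $\mathbb{Q}[X]$, the $X+n$ being distinct irreducibles), so for a nonzero tuple the equation defines a proper algebraic subset and has only finitely many roots $\alpha$; a finite union over admissible tuples shows $\mathcal{E}_{c,d}^{(2)}$ is finite. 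For $\alpha\notin\mathcal{E}_{c,d}^{(2)}$ and $m\neq n$, the exponent vector $(k_0,\dots,k_N)$ together with the $\pm1$ at positions $m,n$ is nonzero (its $m$-th entry is $1$), so every term in the expansion vanishes, giving $\mathbb{E}[\mathbb{X}_\alpha(m)\overline{\mathbb{X}_\alpha(n)}:\Omega_0(N)]\ll N(\log\Delta)^{N+1}/\Delta$.

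Finally I would assemble: the diagonal contributes $\mathbb{P}(\Omega_0(N))\sum_{N<n\leq L}(n+\alpha)^{-2\sigma}\ll\mathbb{P}(\Omega_0(N))\,N^{1-2\sigma}$ (using $2\sigma>1$), while the off-diagonal contributes $\ll\frac{N(\log\Delta)^{N+1}}{\Delta}\bigl(\sum_{N<n\leq L}(n+\alpha)^{-\sigma}\bigr)^2\ll\frac{N(\log\Delta)^{N+1}}{\Delta}L^{2-2\sigma}\ll\frac{NL(\log\Delta)^{N+1}}{\Delta}$, where the last step uses $2-2\sigma<1$, i.e.\ $\sigma>1/2$. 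Combining with the $B$-estimate through $|A+B|^2\leq2|A|^2+2|B|^2$ yields the claimed bound. The main obstacle is precisely this off-diagonal bookkeeping: the Beurling--Selberg approximation error is incurred once for each of the $\sim L^2$ pairs $(m,n)$, and one must check it is still absorbed into $NL(\log\Delta)^{N+1}/\Delta$ — which is exactly what $\sigma>1/2$ provides via $L^{2-2\sigma}\leq L$ — while also verifying that the exceptional set chosen to annihilate all off-diagonal Fourier modes is genuinely finite.
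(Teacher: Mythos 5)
Your proof is correct and follows essentially the same route as the paper: split the tail at $L$, use orthogonality of $\mathbb{X}_\alpha(n)$ for the $n>L$ piece, isolate the diagonal of the $N<n\leq L$ block (giving $\mathbb{P}(\Omega_0(N))\,N^{1-2\sigma}$), and kill the off-diagonal Fourier modes by excluding a finite set of $\alpha$ solving multiplicative relations among the $n+\alpha$. The only cosmetic differences are using $|A+B|^2\leq2|A|^2+2|B|^2$ instead of Cauchy--Schwarz and a slightly enlarged (but still finite) exceptional set that has the small advantage of containing $\mathcal{E}_{c,d}^{(1)}$.
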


\begin{proof}
By the Cauchy--Schwarz inequality, we have 
\begin{align}\label{eq:10111050}
&\mathbf{E}\left[|\zeta(\sigma, \mathbb{X}_\alpha)-\zeta_N(\sigma, \mathbb{X}_\alpha)|^2 : \Omega_0(N) \right] \\
&\ll \mathbf{E}\left[|\zeta_L(\sigma, \mathbb{X}_\alpha)-\zeta_N(\sigma, \mathbb{X}_\alpha)|^2 : \Omega_0(N) \right] \nonumber \\
&\qquad
+ \mathbf{E}\left[|\zeta(\sigma, \mathbb{X}_\alpha)-\zeta_L(\sigma, \mathbb{X}_\alpha)|^2 : \Omega_0(N) \right]. 
\nonumber
\end{align}
The first term is calculated as
\begin{align*}
&\mathbf{E}\left[|\zeta_L(\sigma, \mathbb{X}_\alpha)-\zeta_N(\sigma, \mathbb{X}_\alpha)|^2 : \Omega_0(N) \right] \\
&= \mathbf{P}\left(\Omega_0(N)\right) 
\sum_{N< n \leq L} \frac{1}{(n+\alpha)^{2\sigma}}
+ \sum_{\substack{N<n_1, n_2 \leq L \\ n_1 \neq n_2}}
\frac{\mathbf{E}[\mathbb{X}_\alpha(n_1) \overline{\mathbb{X}_\alpha(n_2)} : \Omega_0(N)]}
{(n_1+\alpha)^\sigma (n_2+\alpha)^\sigma}
\end{align*}
since $\mathbf{E}[1 : \Omega_0(N)] = \mathbf{P}\left(\Omega_0(N)\right)$ by definition. 
We have 
\begin{gather}\label{eq:10111044}
\mathbf{P}\left(\Omega_0(N)\right) 
\sum_{N< n \leq L} \frac{1}{(n+\alpha)^{2\sigma}}
\ll_\sigma \mathbf{P}\left(\Omega_0(N)\right) N^{1-2\sigma}. 
\end{gather}
Furthermore, we apply Lemma \ref{lem:5.3} to obtain
\begin{gather}\label{eq:10111043}
\sum_{\substack{N<n_1, n_2 \leq L \\ n_1 \neq n_2}}
\frac{\mathbf{E}[\mathbb{X}_\alpha(n_1) \overline{\mathbb{X}_\alpha(n_2)} : \Omega_0(N)]}
{(n_1+\alpha)^\sigma (n_2+\alpha)^\sigma}
= S_1+S_2, 
\end{gather}
where
\begin{align*}
S_1
&= \sum_{\substack{N<n_1, n_2 \leq L \\ n_1 \neq n_2}}
\frac{1}{(n_1+\alpha)^\sigma (n_2+\alpha)^\sigma}
\mathbf{E}\left[\mathbb{X}_\alpha(n_1) \overline{\mathbb{X}_\alpha(n_2)} 
\prod_{n=0}^{N} \mathscr{U}_{s_n,t_n}(\mathbb{X}_\alpha(n), \Delta)\right], \\
S_2
&\ll \sum_{\substack{N<n_1, n_2 \leq L \\ n_1 \neq n_2}}
\frac{1}{(n_1+\alpha)^\sigma (n_2+\alpha)^\sigma}
(\log{\Delta})^{N+1}
\sum_{n=0}^{N} \mathbf{E}[\mathscr{K}_{s_n,t_n}(\mathbb{X}_\alpha(n), \Delta)]. 
\end{align*}
Similarly to \eqref{eq:10110739}, we have 
\begin{align*}
&\mathbf{E}\left[\mathbb{X}_\alpha(n_1) \overline{\mathbb{X}_\alpha(n_2)}
\prod_{n=0}^{N} \mathscr{U}_{s_n,t_n}(\mathbb{X}_\alpha(n), \Delta)\right] \\
&= \sum_{|m_0| \leq \Delta} \cdots \sum_{|m_N| \leq \Delta}
\prod_{n=0}^{N} \widetilde{U}_{s_n,t_n}(m_n, \Delta) \,
\mathbf{E}\left[\mathbb{X}_\alpha(n_1) \overline{\mathbb{X}_\alpha(n_2)} 
\prod_{n=0}^{N} \mathbb{X}_\alpha(n)^{m_n}\right]. 
\nonumber
\end{align*}
Then we define $\mathcal{E}_{d}^{(2)}=\mathcal{E}_{d}^{(2)}(N,L,\Delta)$ as the set of all $\alpha \in \mathcal{A}_{d}$ such that 
\begin{gather*}
\frac{n_2+\alpha}{n_1+\alpha}
= \prod_{n=0}^{N} (n+\alpha)^{m_n}
\end{gather*}
for some $N<n_1,n_2 \leq L$ with $n_1 \neq n_2$ and $(m_0, \ldots, m_N) \in \mathbb{Z}^{N+1}$ with $|m_n| \leq \Delta$ for any $0 \leq n \leq N$. 
We see that $\mathcal{E}_{d}^{(2)}$ is a finite set when $N, L$, and $\Delta$ are given. 
By the definition, if $\alpha \in \mathcal{A}_{d} \setminus \mathcal{E}_{d}^{(2)}$, then we have 
\begin{gather*}
\mathbf{E}\left[\mathbb{X}_\alpha(n_1) \overline{\mathbb{X}_\alpha(n_2)} 
\prod_{n=0}^{N} \mathbb{X}_\alpha(n)^{m_n}\right]
= 0
\end{gather*}
for any $N<n_1,n_2 \leq L$ with $n_1 \neq n_2$ and $(m_0, \ldots, m_N) \in \mathbb{Z}^{N+1}$ with $|m_n| \leq \Delta$. 
Hence we conclude that $S_1=0$ for $\alpha \in \mathcal{A}_{d} \setminus \mathcal{E}_{d}^{(2)}$. 
Next, we have 
\begin{gather*}
\sum_{\substack{N<n_1, n_2 \leq L \\ n_1 \neq n_2}}
\frac{1}{(n_1+\alpha)^\sigma (n_2+\alpha)^\sigma}
\leq \left(\sum_{N<n \leq L} \frac{1}{(n+\alpha)^{1/2}} \right)^2
\ll L. 
\end{gather*}
As checked before, we also obtain $\mathbf{E}[\mathscr{K}_{s_n,t_n}(\mathbb{X}_\alpha(n), \Delta)] \ll \Delta^{-1}$. 
Therefore, $S_2$ is estimated as 
\begin{gather*}
S_2
\ll \frac{NL (\log{\Delta})^{N+1}}{\Delta}. 
\end{gather*}
Thus \eqref{eq:10111043} yields 
\begin{gather*}
\sum_{\substack{N<n_1, n_2 \leq L \\ n_1 \neq n_2}}
\frac{\mathbf{E}[\mathbb{X}_\alpha(n_1) \overline{\mathbb{X}_\alpha(n_2)} : \Omega_0(N)]}
{(n_1+\alpha)^\sigma (n_2+\alpha)^\sigma}
\ll \frac{NL (\log{\Delta})^{N+1}}{\Delta}
\end{gather*}
for $\alpha \in \mathcal{A}_{d} \setminus \mathcal{E}_{d}^{(2)}$. 
Then, together with \eqref{eq:10111044}, we obtain the upper bound
\begin{gather*}
\mathbf{E}\left[|\zeta_L(\sigma, \mathbb{X}_\alpha)-\zeta_N(\sigma, \mathbb{X}_\alpha)|^2 : \Omega_0(N) \right]
\ll \mathbf{P}(\Omega_0(N)) N^{1-2\sigma}
+ \frac{NL (\log{\Delta})^{N+1}}{\Delta}. 
\end{gather*}
The estimate for the second term in \eqref{eq:10111050} remains. 
We have 
\begin{align*}
&\mathbf{E}\left[|\zeta(\sigma, \mathbb{X}_\alpha)-\zeta_L(\sigma, \mathbb{X}_\alpha)|^2 : \Omega_0(N) \right] \\
&\leq \mathbf{E}\left[|\zeta(\sigma, \mathbb{X}_\alpha)-\zeta_L(\sigma, \mathbb{X}_\alpha)|^2 \right] 
= \sum_{n>L} \frac{1}{(n+\alpha)^{2\sigma}}
\ll_\sigma L^{1-2\sigma}. 
\end{align*}
From these, we obtain the conclusion. 
\end{proof}

\subsection{Completion of the proof}\label{sec:6.3}
By Propositions \ref{prop:6.1}, \ref{prop:6.2}, and \ref{prop:6.3}, we obtain the following result. 

\begin{corollary}\label{cor:6.4}
Let $1/2<\sigma<1$ be a fixed real number. 
Then, for any $z_0 \in \mathbb{C}$ and $\epsilon>0$, there exists a finite subset $\mathcal{E}_{d}=\mathcal{E}_{d}(\sigma,z_0,\epsilon) \subset \mathcal{A}_{d}$ such that 
\begin{gather*}
\mathbf{P}\left(|\zeta(\sigma, \mathbb{X}_\alpha) -z_0|<\epsilon\right)
>0
\end{gather*}
holds for any $\alpha \in \mathcal{A}_{d} \setminus \mathcal{E}_{d}$.  
\end{corollary}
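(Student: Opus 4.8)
The plan is to deduce the corollary from Propositions \ref{prop:6.1}, \ref{prop:6.2}, and \ref{prop:6.3} by a second-moment argument carried out on the restricted event $\Omega_0(N)$ of Section \ref{sec:6.2}, with the auxiliary parameters $N$, $\delta$, $L$, $\Delta$ chosen in that order so that every error term is dominated and the resulting exceptional set remains finite and independent of $\alpha$. Fixing $\sigma$, $z_0$, and $\epsilon$, I would start from $N_2 = N_2(c,d,\sigma,z_0)$ as in Proposition \ref{prop:6.1}: for any $N \ge N_2$ one has $z_0 \in \supp(\zeta_N(\sigma,\mathbb{X}_\alpha))$ for every $\alpha \in \mathbb{A}_{c,d}$, hence $\mathbb{P}(|\zeta_N(\sigma,\mathbb{X}_\alpha) - z_0| < \epsilon/4) > 0$, so the base point $\omega_0$ entering the definition of $\Omega_0(N) = \Omega_0(N;\alpha,\delta)$ exists; and, exactly as in Section \ref{sec:6.2} but with $\epsilon/4$ in place of $\epsilon$, taking $\delta \le A(c)(\epsilon/4)N^{-1/2}$ forces $|\zeta_N(\sigma,\mathbb{X}_\alpha)(\omega) - z_0| < \epsilon/2$ for every $\omega \in \Omega_0(N)$ and every $\alpha \in \mathbb{A}_{c,d}$.

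I would then freeze the parameters as follows, writing $C_\sigma$ for the implied constant of Proposition \ref{prop:6.3} and $C_0$ for the absolute one of Proposition \ref{prop:6.2}. First pick $N \ge N_2$ large enough that $C_\sigma N^{1-2\sigma} \le \epsilon^2/24$ and $A(c)(\epsilon/4)N^{-1/2} < 1/2$, which is possible since $1-2\sigma<0$; then set $\delta = A(c)(\epsilon/4)N^{-1/2}$, now a fixed element of $(0,1/2)$. Next pick $L > N$ large enough that $C_\sigma L^{1-2\sigma} \le \frac{\epsilon^2}{48}(2\delta)^{N+1}$, and finally pick $\Delta \ge 3$ large enough that simultaneously $C_0 N(\log\Delta)^{N+1}/\Delta \le \frac{1}{2}(2\delta)^{N+1}$ and $C_\sigma NL(\log\Delta)^{N+1}/\Delta \le \frac{\epsilon^2}{48}(2\delta)^{N+1}$. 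Since $N$, $\delta$, $L$, $\Delta$ then depend only on $c,d,\sigma,z_0,\epsilon$, the set $\mathcal{E}_{c,d} := \mathcal{E}_{c,d}^{(1)}(N,\Delta) \cup \mathcal{E}_{c,d}^{(2)}(N,L,\Delta)$ is a finite subset of $\mathbb{A}_{c,d}$ depending only on $c,d,\sigma,z_0,\epsilon$, by Propositions \ref{prop:6.2} and \ref{prop:6.3}.

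For $\alpha \in \mathbb{A}_{c,d} \setminus \mathcal{E}_{c,d}$ the argument would close as follows. By Proposition \ref{prop:6.2} and the first condition on $\Delta$, $\mathbb{P}(\Omega_0(N)) \ge \frac{1}{2}(2\delta)^{N+1} > 0$, so in particular $(2\delta)^{N+1} \le 2\,\mathbb{P}(\Omega_0(N))$; feeding this into Proposition \ref{prop:6.3} and using the three size conditions, each of its three terms contributes at most $\frac{\epsilon^2}{24}\mathbb{P}(\Omega_0(N))$, whence $\mathbb{E}[|\zeta(\sigma,\mathbb{X}_\alpha) - \zeta_N(\sigma,\mathbb{X}_\alpha)|^2 : \Omega_0(N)] \le \frac{\epsilon^2}{8}\mathbb{P}(\Omega_0(N))$. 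Chebyshev's inequality applied to the sub-probability measure $A \mapsto \mathbb{P}(A \cap \Omega_0(N))$ then bounds $\mathbb{P}(\{|\zeta(\sigma,\mathbb{X}_\alpha) - \zeta_N(\sigma,\mathbb{X}_\alpha)| \ge \epsilon/2\} \cap \Omega_0(N))$ by $(4/\epsilon^2)\cdot\frac{\epsilon^2}{8}\mathbb{P}(\Omega_0(N)) = \frac{1}{2}\mathbb{P}(\Omega_0(N))$, so the event on which $\omega \in \Omega_0(N)$ and $|\zeta(\sigma,\mathbb{X}_\alpha)(\omega) - \zeta_N(\sigma,\mathbb{X}_\alpha)(\omega)| < \epsilon/2$ has probability at least $\frac{1}{2}\mathbb{P}(\Omega_0(N)) > 0$. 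On that event the triangle inequality together with $|\zeta_N(\sigma,\mathbb{X}_\alpha)(\omega) - z_0| < \epsilon/2$ (the defining property of $\Omega_0(N)$) gives $|\zeta(\sigma,\mathbb{X}_\alpha)(\omega) - z_0| < \epsilon$, hence $\mathbb{P}(|\zeta(\sigma,\mathbb{X}_\alpha) - z_0| < \epsilon) > 0$, which is the assertion.

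The main obstacle is not any new idea but the bookkeeping: every parameter must be fixed strictly before the exceptional sets are read off, and the scheme closes only because $N_2$ in Proposition \ref{prop:6.1} and the implied constants in Propositions \ref{prop:6.2}--\ref{prop:6.3} are uniform over $\alpha \in \mathbb{A}_{c,d}$; the delicate step is to check that, with the stated order, $L$ and $\Delta$ can be chosen depending only on $c,d,\sigma,z_0,\epsilon$ — which works because $(2\delta)^{N+1}$ is already a fixed positive quantity after the choices of $N$ and $\delta$.
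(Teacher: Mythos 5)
Your proof is correct and follows essentially the same route as the paper's: fix $N \ge N_2$, then $\delta$, then $L$, then $\Delta$, in that order, so that the exceptional sets $\mathcal{E}_{c,d}^{(1)}(N,\Delta)$ and $\mathcal{E}_{c,d}^{(2)}(N,L,\Delta)$ are pinned down by quantities depending only on $c,d,\sigma,z_0,\epsilon$, and then close with Chebyshev applied to the restricted second moment from Proposition \ref{prop:6.3} together with the lower bound on $\mathbb{P}(\Omega_0(N))$ from Proposition \ref{prop:6.2}. The only difference from the paper is cosmetic bookkeeping: you rescale the various $\epsilon$'s so as to land on the nominal $\epsilon$ in the conclusion, whereas the paper arrives at $3\epsilon$ and silently relies on $\epsilon$ being arbitrary.
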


\begin{proof}
Let $N \geq N_2$ with the integer $N_2=N_2(d,\sigma,z_0)$ as in Proposition \ref{prop:6.1}. 
For $\omega \in \Omega_0(N)$, the difference between $\zeta_N(\sigma, \mathbb{X}_\alpha)(\omega)$ and $\zeta_N(\sigma, \mathbb{X}_\alpha)(\omega_0)$ is estimated as
\begin{align*}
\left|\zeta_N(\sigma, \mathbb{X}_\alpha)(\omega)-\zeta_N(\sigma, \mathbb{X}_\alpha)(\omega_0)\right|
&\leq \sum_{n=0}^{N} \frac{|\mathbb{X}_\alpha(n)(\omega)-\mathbb{X}_\alpha(n)(\omega_0)|}{(n+\alpha)^\sigma} \\
&\ll \delta \, \sum_{n=0}^{N} \frac{1}{(n+\alpha)^\sigma}
\ll_d \delta \, N^{1/2}
\end{align*}
for any $\alpha \in \mathcal{A}_{d}$, where the implied constants depend at most on $d$. 
Therefore, the condition $\omega \in \Omega_0(N)$ implies
\begin{gather*}
|\zeta_N(\sigma, \mathbb{X}_\alpha)(\omega) -z_0|
< 2\epsilon
\end{gather*}
for any $\alpha \in \mathcal{A}_{d}$ if we suppose $\delta \leq A(d) \epsilon \, N^{-1/2}$, where $A(d)$ is a suitable positive constant. 
Hence we have 
\begin{align*}
&\mathbf{P}\left(|\zeta(\sigma, \mathbb{X}_\alpha) -z_0|<3\epsilon\right) \\
&\geq \mathbf{P}\big(\Omega_0(N) \cap \{|\zeta(\sigma, \mathbb{X}_\alpha)-\zeta_N(\sigma, \mathbb{X}_\alpha)|< \epsilon\} \big) \\
&= \mathbf{P}\big(\Omega_0(N)\big)
- \mathbf{P}\big(\Omega_0(N) \cap \{|\zeta(\sigma, \mathbb{X}_\alpha)-\zeta_N(\sigma, \mathbb{X}_\alpha)| \geq \epsilon\} \big)
\end{align*}
if $\delta \leq A(d) \epsilon \, N^{-1/2}$ is satisfied. 
Furthermore, Proposition \ref{prop:6.3} yields
\begin{align*}
&\mathbf{P}\big(\Omega_0(N) \cap \{|\zeta(\sigma, \mathbb{X}_\alpha)-\zeta_N(\sigma, \mathbb{X}_\alpha)| \geq \epsilon\} \big) \\
&\leq \frac{1}{\epsilon^2} 
\mathbf{E}\left[|\zeta(\sigma, \mathbb{X}_\alpha)-\zeta_N(\sigma, \mathbb{X}_\alpha)|^2 : \Omega_0(N) \right] \\
&\leq \frac{B(\sigma)}{\epsilon^2} \left\{\mathbf{P}(\Omega_0(N)) N^{1-2\sigma}
+ L^{1-2\sigma}
+ \frac{NL (\log{\Delta})^{N+1}}{\Delta} \right\}
\end{align*}
for any $\alpha \in \mathcal{A}_{d} \setminus \mathcal{E}_{d}^{(2)}$, where $B(\sigma)$ is a positive constant depending only on $\sigma$. 
Here, the set $\mathcal{E}_{d}^{(2)}=\mathcal{E}_{d}^{(2)}(N,L,\Delta)$ is as in Proposition \ref{prop:6.3}. 
We can take a positive integer $N_3=N_3(\sigma,\epsilon,N_2) \geq N_2$ such that 
\begin{gather*}
\frac{B(\sigma)}{\epsilon^2} \mathbf{P}(\Omega_0(N_3)) N_3^{1-2\sigma}
< \frac{1}{2} \mathbf{P}(\Omega_0(N_3)) 
\end{gather*}
is satisfied. 
Then, putting $N=N_3$ and $\delta=A(d) \epsilon \, N_3^{-1/2}$, we derive the inequality
\begin{align*}
&\mathbf{P}\left(|\zeta(\sigma, \mathbb{X}_\alpha) -z_0|<3\epsilon\right) \\
&> \frac{1}{2} \mathbf{P}(\Omega_0(N_3))
- \frac{B(\sigma)}{\epsilon^2} \left\{L^{1-2\sigma}
+ \frac{N_3 L (\log{\Delta})^{N_3+1}}{\Delta} \right\}
\end{align*}
for $\alpha \in \mathcal{A}_{d} \setminus \mathcal{E}_{d}^{(2)}$. 
Furthermore, we apply Proposition \ref{prop:6.2} to deduce
\begin{gather*}
\mathbf{P}\left(|\zeta(\sigma, \mathbb{X}_\alpha) -z_0|<3\epsilon\right)
> (2 \delta)^{N_3+1}
- \frac{B'(\sigma)}{\epsilon^2} \left\{L^{1-2\sigma}
+ \frac{N_3 L (\log{\Delta})^{N_3+1}}{\Delta} \right\}
\end{gather*}
for $\alpha \in \mathcal{A}_{d} \setminus (\mathcal{E}_{d}^{(1)} \cup \mathcal{E}_{d}^{(2)})$, where $B'(\sigma)$ is some positive constant, and the set $\mathcal{E}_{d}^{(1)}=\mathcal{E}_{d}^{(1)}(N_3, \Delta)$ is as in Proposition \ref{prop:6.2}. 
Then, we see that there exists an integer $L=L(\sigma,\epsilon,\delta,N_3)$ satisfying 
\begin{gather*}
\frac{B'(\sigma)}{\epsilon^2} L^{1-2\sigma}
< \frac{1}{4} (2 \delta)^{N_3+1}. 
\end{gather*}
Finally, we take a real number $\Delta=\Delta(\sigma,\epsilon,\delta,N_3) \geq3$ so that
\begin{gather*}
\frac{B'(\sigma)}{\epsilon^2}  \frac{N_3 L (\log{\Delta})^{N_3+1}}{\Delta}
< \frac{1}{4} (2 \delta)^{N_3+1}
\end{gather*}
is satisfied. 
Then we obtain
\begin{gather*}
\mathbf{P}\left(|\zeta(\sigma, \mathbb{X}_\alpha) -z_0|<3\epsilon\right)
> \frac{1}{2} (2 \delta)^{N_3+1}
\end{gather*}
for $\alpha \in \mathcal{A}_{d} \setminus \mathcal{E}_{d}$ with $\mathcal{E}_{d}= \mathcal{E}_{d}^{(1)} \cup \mathcal{E}_{d}^{(2)}$, which yields the desired conclusion. 
\end{proof}

\begin{proof}[Proof of Theorem \ref{thm:1.3}]
As we have seen in Example \ref{exa:a}, the set $\mathcal{A}_d$ contains infinitely many algebraic irrational numbers for $d \geq5$. 
By the Portmanteau theorem \cite[Theorem 29.1]{Billingsley1995}, we deduce from Theorem \ref{thm:1.2} the inequality
\begin{align*}
&\liminf_{T \to\infty} 
\frac{1}{T} \meas \left\{t \in [0,T] ~\middle|~ |\zeta(\sigma+it,\alpha)-z_0|<\epsilon \right\} \\
&\geq \mathbf{P}\left(|\zeta(\sigma, \mathbb{X}_\alpha) -z_0|<\epsilon\right)
\end{align*}
for any $\alpha \in \mathcal{A}$. 
Hence Corollary \ref{cor:6.4} yields Theorem \ref{thm:1.3}. 
\end{proof}


\providecommand{\bysame}{\leavevmode\hbox to3em{\hrulefill}\thinspace}
\providecommand{\MR}{\relax\ifhmode\unskip\space\fi MR }
\providecommand{\MRhref}[2]{%
  \href{http://www.ams.org/mathscinet-getitem?mr=#1}{#2}
}
\providecommand{\href}[2]{#2}

\end{document}